\documentclass[10pt,reqno]{amsart}
\RequirePackage{amsthm,amsmath}
\RequirePackage[numbers]{natbib}
\RequirePackage[colorlinks,linkcolor=blue,citecolor=blue,urlcolor=blue]{hyperref}
\usepackage{amssymb}
\usepackage{geometry}
\usepackage{enumitem}

\numberwithin{equation}{section}

\theoremstyle{plain} 
\newtheorem{thm}{Theorem}[section]
\newtheorem{lem}[thm]{Lemma}

\newtheorem{pro}[thm]{Proposition}

\newtheorem{defn}[thm]{Definition}

\theoremstyle{remark}
\newtheorem{remark}{Remark}[section]
\newtheorem{fact}[remark]{Fact}

\newcommand{\w}{\mathbf w}

\newcommand{\compC}{\mathbb{C}}

\newcommand{\bigO}{{O}}

\newcommand{\realR}{\mathbb{R}}
\renewcommand{\u}{\mathbf{u}}

\newcommand{\x}{\mathbf{x}}

\allowdisplaybreaks

\DeclareMathOperator{\diag}{diag}

\DeclareMathOperator{\SC}{SC}
\DeclareMathOperator{\Span}{Span}
\DeclareMathOperator{\tr}{tr}
\DeclareMathOperator{\Var}{Var}

\begin{document}

 \begin{minipage}{0.85\textwidth}
 \vspace{2.5cm}
 \end{minipage}
\begin{center}
\large\bf
Eigenvector distribution of random matrices under critical finite-rank deformations
\end{center}
\renewcommand{\thefootnote}{\fnsymbol{footnote}}	
 \vspace{1cm}
\begin{center}
\begin{minipage}{0.28\textwidth}
\centering
Zhigang Bao\\
\footnotesize The University of Hong Kong\\
{\it zgbao@hku.hk}
\end{minipage}
\hfill
\begin{minipage}{0.28\textwidth}
\centering
Dong Wang\\
\footnotesize University of Chinese Academy of Sciences\\
{\it wangdong@wangd-math.xyz}
\end{minipage}
\hfill
\begin{minipage}{0.28\textwidth}
\centering
Yue Zhu\\
\footnotesize University of Chinese Academy of Sciences\\
{\it zhuyue242@mails.ucas.ac.cn}
\end{minipage}
\end{center}

\renewcommand{\thefootnote}{\fnsymbol{footnote}}	
\vspace{1cm}

\begin{center}
\begin{minipage}{0.8\textwidth}
\footnotesize
\setlength{\parindent}{2em}
\noindent{\bf Abstract.}
We investigate the eigenvector distribution at the soft edge for Gaussian random matrices with finite-rank deformations, in the critical regime of BBP transition. For finite-rank deformations of the GOE and GUE with critical spikes, we find that the squared overlap between a leading eigenvector and a spike, rescaled by \(N^{1/3}\), converges weakly to the negative reciprocal of the derivative of an Airy-Green function evaluated at the corresponding soft-edge root. For the  rank-one critically spiked Gaussian \(\beta\)-ensemble, \(\beta>0\), we obtain an analogous result involving an Airy-Green function. In both cases, the Airy-Green functions are generalizations of the one introduced by Bykhovskaya--Gorin--Sodin \cite{Bykhovskaya-Gorin-Sodin25}. The proofs are both based on an eigenvector--eigenvalue identity and a resolvent-differentiation mechanism. 
\end{minipage}
\end{center}

 \vspace{2mm}
 
 {\small
\footnotesize{\noindent\textit{Date}: \today}\\
 \footnotesize{\noindent\textit{Keywords}:}
 eigenvector overlap, finite-rank deformation, Airy-Green function, stochastic Airy operator
 
 \vspace{2mm}

 }

\thispagestyle{headings}

\tableofcontents

\section{Introduction}
Large random matrices with finite-rank deformation have been a widely used and well-studied model from both theoretical and applied perspectives in random matrix theory. A well-known result is the Baik-Ben Arous-P\'{e}ch\'{e} (BBP) phase transition \cite{Baik-Ben_Arous-Peche05, Peche05}, which depicts a phase transition of the distribution of the leading eigenvalues of spiked complex Wishart matrices and spiked Gaussian unitary ensemble (GUE). In particular, the distribution in the critical regime, as an interpolation between the Tracy-Widom distribution and the Gaussian distribution, was discovered therein. The critical distribution in the real case, and a further extension to more general $\beta$-ensembles, was later given in \cite{Bloemendal-Virag11,Bloemendal-Virag11a} as the smallest eigenvalues of the stochastic Airy operator with a certain boundary condition. Very recently, the critical eigenvalue was alternatively represented as a root of the so-called Airy-Green function in \cite{Bykhovskaya-Gorin-Sodin25}, for the real case. 

Apart from the mentioned results on the critical eigenvalue distributions for the Wishart and Gaussian ensembles, there have been a vast number of results on the subcritical and supercritical regimes, and also extensions to more generally distributed random matrices with low-rank deformations in all regimes. Without being comprehensive, we refer to \cite{Bai-Yao08, Baik-Wang10a, Baik-Wang10, Benaych_Georges-Guionnet-Maida11, Capitaine-Donati_Martin-Feral09, Feral-Peche07, Knowles-Yin13, Knowles-Yin14} and the references therein.

Along with the BBP transition for the leading eigenvalues, there is a corresponding transition for the leading eigenvectors. In particular, in the supercritical regime, the top eigenvectors have a non-negligible overlap with the spike eigenvectors, while in the subcritical regime, they are nearly orthogonal; see \cite{Benaych_Georges-Nadakuditi11} for instance. Results on finer scales including precise fluctuation results have also been obtained in the subcritical and supercritical regimes under rather general distributional assumptions of the matrix entries; see \cite{Bloemendal-Knowles-Yau-Yin16, Bao-Ding-Wang18, Bao-Ding-Wang-Wang20, Capitaine-Donati_Martin18} for instance. We also refer to \cite{Benaych_Georges-Nadakuditi12, Capitaine18, Paul07, Mo11a, Ding20}, etc., for related studies.

In contrast, as an analogue of the main BBP result for the critical eigenvalue distribution in \cite{Baik-Ben_Arous-Peche05, Peche05}, the eigenvector distribution in the critical regime has been much less studied. On the fluctuation level, the only available result was obtained in \cite{Bao-Wang20}, for the spiked GUE model.  In \cite{Bao-Wang20}, using an eigenvector-eigenvalue identity, together with the determinantal structure of the GUE minor process with an external source, the eigenvector distribution can be represented by an extended Airy process. However, such a representation cannot be extended to spiked GOE due to the lack of an explicit distribution of the GOE minor process with an external source, not to mention the spiked Gaussian $\beta$-ensemble for general $\beta>0$. In this work, we aim to introduce a unified representation for the leading eigenvector of the Gaussian orthogonal/unitary ensembles with finite-rank deformation by using the Airy-Green function introduced recently in \cite{Bykhovskaya-Gorin-Sodin25}, and extend the result to the Gaussian $\beta$-ensemble for general $\beta$, in the rank one case.

Below, we first introduce some preliminaries and notation in
Section~\ref{s.preliminaries}, and then state our main results in
Section~\ref{s.main results}. We also record in
Proposition~\ref{pro:existence_rank_r} the almost-sure simplicity of
the higher-rank Airy$_1$ and Airy$_2$ point processes, as an auxiliary
consequence of the arguments in
Sections~\ref{subsec:proof_mult_rank_GOE} and
\ref{subsec:proof_GUE}. 

\subsection{Preliminaries and notations} \label{s.preliminaries}

In this subsection we introduce our matrix models and collect some necessary inputs used throughout the paper. We first recall the classical Gaussian ensembles and the Dumitriu--Edelman tridiagonal Gaussian \(\beta\)-ensemble, and their finite-rank deformations. Further, we state some  results regarding the stochastic Airy operator and the Airy-Green function. 

Most of the results stated in this subsection are known in literature. The proofs of the new properties of the Airy-Green functions stated in this subsection, in particular Propositions \ref{prop:beta_airy_green}, \ref{pro:multispike_airy_green}, and Lemma~\ref{lem:beta_airy_green_infty}, are deferred to Section~\ref{sec:supplemental}.

\subsubsection{Gaussian orthogonal ensemble, Gaussian unitary ensemble and Gaussian \texorpdfstring{$\beta$}{beta}-ensemble}

Let \(H_N=(h_{ij})_{1\le i,j\le N}\) be an \(N\times N\) random matrix. We say that \(H_N\) is a \emph{Gaussian Orthogonal Ensemble} (GOE) matrix if \(H_N\) is real symmetric,
\begin{align} \label{eq:defn_GOE}
h_{kk}\sim {}& \mathcal N\!\left(0,\frac{2}{N}\right),\quad 1 \leq k \leq N, & h_{ij}\sim {}& \mathcal N\!\left(0,\frac{1}{N}\right),\quad 1 \leq i<j \leq N,
\end{align}
and all diagonal and upper-triangular entries are independent. We say that \(H_N\) is a \emph{Gaussian Unitary Ensemble} (GUE) matrix if \(H_N\) is complex Hermitian,
\begin{align} \label{eq:defn_GUE}
  h_{kk}\sim {}& \mathcal N\!\left(0,\frac{1}{N}\right),\quad 1 \leq k \leq N, & h_{ij}\sim {}& \mathcal N\!\left(0,\frac{1}{2N}\right) +i\,\mathcal N\!\left(0,\frac{1}{2N}\right),\quad 1 \leq i<j \leq N,
\end{align}
and all diagonal and upper-triangular entries are independent. In either case, we write
\begin{align} \label{matrix eigva}
  \mu_1\ge \mu_2\ge \cdots \ge \mu_N
\end{align}
for the eigenvalues of \(H_N\) in descending order.

For the above GOE and GUE matrices, the joint probability density function (j.p.d.f.)  of \(\{\mu_j\}_{j=1}^N\) is
\begin{equation}\label{betaEpdf}
  \frac{1}{Z_{N,\beta}}
  \prod_{1\le i<j\le N}|\mu_i-\mu_j|^\beta
  \exp\!\left(-\frac{\beta N}{4}\sum_{i=1}^N\mu_i^2\right),
\end{equation}
with \(\beta=1\) in the GOE case and \(\beta=2\) in the GUE case. 

For general \(\beta>0\), the $N$-dimensional \emph{Gaussian $\beta$-ensemble} (G$\beta$E) is realized by the Dumitriu--Edelman tridiagonal random matrix \cite{Dumitriu-Edelman02}
\begin{equation}\label{eq:defn_H^beta}
H_N^{(\beta)}
=
\frac{1}{\sqrt{\beta N}}
\begin{pmatrix}
a_1^{(\beta)} & b_1^{(\beta)} & & & \\
b_1^{(\beta)} & a_2^{(\beta)} & b_2^{(\beta)} & & \\
& b_2^{(\beta)} & a_3^{(\beta)} & \ddots & \\
& & \ddots & \ddots & b_{N-1}^{(\beta)} \\
& & & b_{N-1}^{(\beta)} & a_N^{(\beta)}
\end{pmatrix},
\end{equation}
where the entries are independent and satisfy
\begin{align*} % \label{eq:defn_H^beta_entries}
  a_j^{(\beta)}\sim {}& \mathcal N(0,2), \quad 1\le j\le N,  &  b_k^{(\beta)}\sim {}& \chi_{\beta(N-k)}, \quad 1\le k\le N-1.
\end{align*}
The j.p.d.f. of the eigenvalues of $H^{(\beta)}_N$ for general $\beta > 0$ is given by \eqref{betaEpdf}; see \cite{Dumitriu-Edelman02}. We denote the eigenvalues of \(H_N^{(\beta)}\) in descending order by
\begin{equation} \label{eq:defn_mu^beta_k}
\mu_1^{(\beta)}\ge \mu_2^{(\beta)}\ge \cdots \ge \mu_N^{(\beta)}.
\end{equation}
For \(\beta=1,2\), the distribution of \(\{\mu^{(\beta)}_j\}_{j=1}^N\) coincides with the GOE/GUE eigenvalue law.

The empirical spectral measure of a GOE/GUE matrix \(H_N\), and more generally that of \(H_N^{(\beta)}\), converges almost surely to the semicircle law
\[
d\mu_{\SC}(x)=\frac{\sqrt{4-x^2}}{2\pi}\,dx,\qquad x\in[-2,2].
\]
The Stieltjes transform of semicircle law is
\begin{align}
    G_{\SC}(z)
=\int_{\mathbb R}\frac{1}{z-x}\,d\mu_{\SC}(x)=
\frac{z-\sqrt{z^2-4}}{2},
\qquad z\in\mathbb C\setminus[-2,2],\label{Stieltjes transform of sc}
\end{align}
where the branch is chosen so that \(\sqrt{z^2-4}\sim z\) as \(|z|\to\infty\). Moreover, the top eigenvalue converges almost surely to \(2\); see
\cite{Bourgade-Erdos-Yau14} for instance. 

\subsubsection{Spiked deformations} %\label{subsubsec:spiked_deform}

Throughout the paper, we equip  $\mathbb{R}^N$ with the standard Euclidean metric and \(\mathbb C^N\) with the standard Hermitian metric. Let
\[e_i=(0,\dots,\underset{i-th}{1},0,\dots,0)^\top, \quad 1 \leq i \leq N, \]
be the canonical basis vectors of \(\mathbb R^N\) or \(\mathbb C^N\), according to the ensemble under consideration, and \(\langle\cdot,\cdot\rangle\) denotes the standard Euclidean or Hermitian inner product.

We consider additive finite-rank deformations of \(H_N\) or \(H^{(\beta)}_N\) in the form of
\begin{align}\label{eq:spiked_model}
  Y= {}& X+P, && \text{where}  & P = {}& \sum_{i=1}^r d_i \w_i\w_i^\ast .
\end{align}
When $X$ is either a GOE  or GUE matrix \(H_N\), $r$ can be any fixed positive integer and the vectors \(\w_1,\dots,\w_r\) are deterministic orthonormal vectors in \(\mathbb R^N\) in the real case and in \(\mathbb C^N\) in the complex case.  When $X$ is \(H^{(\beta)}_N\) and we take \(r=1\) and \(\w_1=e_1\). Denote the eigenvalues of \(Y\) by
\begin{equation*} % \label{eq:eigen_Y}
\lambda_1\ge \lambda_2\ge \cdots \ge \lambda_N,
\end{equation*}
and let \(\x_k\) be a unit eigenvector associated with \(\lambda_k\). Whenever the spectrum of \(Y\) is simple, \(\x_k\) is determined by \(Y\) and \(\lambda_k\) up to a sign in the real case and up to a phase in the complex case. This simplicity holds almost surely in the cases considered below. Hence the overlaps
\begin{equation} \label{eq:overlap}
  \lvert \langle \x_k,\w_j\rangle\rvert^2,\qquad j=1,\dots,r,
\end{equation}
are unambiguously defined. For simplicity, we will call the quantities in (\ref{eq:overlap}) the eigenvector overlaps in the sequel. 

\begin{remark}
  Since  GOE/GUE is orthogonally/unitarily invariant,  the joint law of the eigenvector overlaps is unchanged if we replace $\w_j$'s by $e_j$'s. 
\end{remark}

\begin{remark}
When \(\beta=1,2\) the eigenvector overlap law for \(H_N^{(\beta)}+d_1e_1e_1^\ast\) is the same as that for \( H_N+d_1e_1e_1^\ast\), where \(H_N\) is the GOE matrix for \(\beta=1\) and the GUE matrix for \(\beta=2\). Indeed, the Dumitriu--Edelman/Lanczos tridiagonalization is implemented by an orthogonal conjugation in the GOE case and by a unitary conjugation in the GUE case. In both cases the conjugating matrix can be chosen to fix \(e_1\), and therefore it leaves \( e_1e_1^\ast\) unchanged. The eigenvectors are conjugated, but
\[
  \lvert \langle \x_k,e_1\rangle\rvert
\]
is preserved. Hence the rank-one overlap law of the invariant GOE/GUE model coincides with that of the tridiagonal model \(H_N^{(\beta)}+d_1e_1e_1^\ast\) for \(\beta=1,2\).
\end{remark}

\subsubsection{Scalar and multivariate stochastic Airy operators}

Let \(b=(b_x)_{x\ge0}\) be a standard real Brownian motion. For \(\beta>0\), the scalar stochastic Airy operator is formally defined as 
\begin{equation}\label{eq:SAO}
\mathcal H_\beta
=
-\frac{d^2}{dx^2}+x+\frac{2}{\sqrt{\beta}}\,b_x',
\end{equation}
on \((0,\infty)\), where \(b_x'\) is real white noise, understood as the distributional derivative of \(b_x\). For \(\theta\in\mathbb R\cup\{\infty\}\), we impose the Robin boundary condition
\[
f'(0)=\theta f(0),
\]
with the convention that \(\theta=\infty\) means the Dirichlet boundary condition \(f(0)=0\).

The construction of \(\mathcal H_\beta\) with Dirichlet boundary
condition was established in \cite{Ramirez-Rider-Virag11}. The Robin
boundary condition
\[
f'(0)=\theta f(0)
\]
is treated in \cite[Section~2]{Bloemendal-Virag11}. The existence and
discreteness of the spectrum follow from
\cite[Theorem~2.2 and Corollary~2.3]{Gaudreau_Lamarre-Ghosal-Li-Liao23}, while its simplicity
follows from \cite[Lemma~2.7]{Bloemendal-Virag11}.

\begin{pro} % \label{pro:existence_rank_1}
Fix \(\beta>0\). For every
\(\theta\in\mathbb R\cup\{\infty\}\), the operator
\(\mathcal H_\beta\) with boundary condition
\(f'(0)=\theta f(0)\), interpreted as \(f(0)=0\) when
\(\theta=\infty\), has, almost surely, a purely discrete, simple
spectrum bounded from below. We enumerate its eigenvalues in strictly
increasing order as
\[
\Lambda_{\beta,1}^{(\theta)}
<
\Lambda_{\beta,2}^{(\theta)}
<
\cdots,
\qquad
\Lambda_{\beta,k}^{(\theta)}\longrightarrow+\infty.
\]
\end{pro}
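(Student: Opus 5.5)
The plan is to realize \(\mathcal H_\beta\) with the boundary condition \(f'(0)=\theta f(0)\) through its quadratic form, establish discreteness and lower-boundedness by a form-compactness argument, and then prove simplicity by a Wronskian-type ODE uniqueness argument. This is the route of \cite{Ramirez-Rider-Virag11,Bloemendal-Virag11}.

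First, the form. Let \(L^*\) be the completion of compactly supported smooth functions on \([0,\infty)\) under the norm \(\|f\|_*^2=\int_0^\infty (f'^2+(1+x)f^2)\,dx\). When \(\theta=\infty\), I restrict to functions vanishing at \(0\). The quadratic form is
\[
\langle f,\mathcal H_\beta f\rangle=\int_0^\infty \bigl(f'^2+xf^2\bigr)dx+\theta f(0)^2+\frac{2}{\sqrt\beta}\int_0^\infty f^2\,db_x ,
\]
where the noise term is defined by integrating by parts against \(b\). Here the Robin condition appears as the boundary term \(\theta f(0)^2\), the natural boundary condition of the form.

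Second, the key estimate. I decompose \(b_x=\bar b_x+(b_x-\bar b_x)\), where \(\bar b_x=\int_x^{x+1}b_y\,dy\) is a local average. Then \(\bar b\) is \(C^1\) and \(|b_x-\bar b_x|\le C(1+\sqrt{\log(1+x)})\) almost surely. This gives, for every \(\varepsilon>0\), a random constant \(C_\varepsilon\) such that
\[
\Bigl|\int f^2\,db\Bigr|\le \varepsilon\|f\|_*^2+C_\varepsilon\|f\|_{L^2}^2 .
\]
The boundary term is handled similarly, using the trace inequality \(f(0)^2\le \varepsilon\|f'\|^2+\varepsilon^{-1}\|f\|^2\). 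Consequently the form is closed and bounded below, with
\[
c\|f\|_*^2-C\|f\|_{L^2}^2\le \langle f,\mathcal H_\beta f\rangle\le C\|f\|_*^2 .
\]
By Friedrichs' extension this defines a self-adjoint operator bounded below. Since the unit ball of \(L^*\) is compact in \(L^2\), by Rellich's theorem together with the confining \(x\) potential, the resolvent is compact. Hence the spectrum is purely discrete, and the eigenvalues accumulate only at \(+\infty\). The variational characterization (min–max) gives the enumeration \(\Lambda_{\beta,1}^{(\theta)}\le\Lambda_{\beta,2}^{(\theta)}\le\cdots\to\infty\).

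Third, simplicity, which is the main obstacle, because eigenfunctions are only weak solutions. I would show that an eigenfunction \(f\) with eigenvalue \(\Lambda\) satisfies the integrated equation
\[
f'(x)=f'(0)+\int_0^x (y-\Lambda)f(y)\,dy+\frac{2}{\sqrt\beta}\Bigl(b_xf(x)-\int_0^x b_yf'(y)\,dy\Bigr),
\]
with \(f'\) continuous and \(f'(0)=\theta f(0)\). This follows by testing the weak equation against localized test functions. The equation is a linear first-order system for \((f,f')\) with continuous coefficients, so the pair \((f(0),f'(0))\) determines \(f\) uniquely. The boundary condition fixes this pair up to scaling: it is proportional to \((1,\theta)\), or to \((0,1)\) when \(\theta=\infty\). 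Therefore each eigenspace is one-dimensional, and the inequalities in the enumeration are strict.

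All statements hold on the almost-sure event where the Brownian growth bound holds, which yields the proposition.
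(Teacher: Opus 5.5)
Your proposal is correct and takes essentially the route the paper relies on. The paper gives no self-contained proof: it cites \cite{Ramirez-Rider-Virag11} and \cite[Section~2]{Bloemendal-Virag11} for the form construction, \cite[Theorem~2.2 and Corollary~2.3]{Gaudreau_Lamarre-Ghosal-Li-Liao23} for discreteness, and \cite[Lemma~2.7]{Bloemendal-Virag11} for simplicity, and your form bound with compact embedding plus uniqueness for the integrated eigenvalue equation (with initial data proportional to $(1,\theta)$, or to $(0,1)$ in the Dirichlet case) reconstructs that argument.

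Two small points to tighten. The form bound also needs $|\bar b'_x|=|b_{x+1}-b_x|\le C\sqrt{\log(2+x)}$, not only the bound on $b-\bar b$. For the uniqueness step, note that $(f,\,f'-\tfrac{2}{\sqrt\beta}\,b f)$ solves a genuine linear ODE with continuous coefficients; alternatively, apply Gronwall directly to the Volterra system.
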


The following convergence statements are essentially from \cite[Theorem~1.5]{Bloemendal-Virag11} and \cite[Theorem~1.1]{Ramirez-Rider-Virag11}. In particular, the unperturbed Dirichlet case, i.e., \(\theta=\infty\), is equivalent to the soft-edge limit in \cite[Theorem~1.1]{Ramirez-Rider-Virag11}.

\begin{pro}\label{pro:convergence_rank_1}
Fix \(\beta>0\), \(\theta\in\mathbb R\), and \(k\ge1\). Let \(\mu_1^{(\beta)}\ge \cdots \ge \mu_N^{(\beta)}\) be the eigenvalues of \(H_N^{(\beta)}\), and let \(\lambda_1\ge\cdots\ge\lambda_N\) be the eigenvalues of
\[
  H_N^{(\beta)}+d e_1e_1^\top,
  \qquad
  d=1-\theta N^{-1/3}.
\]
Then
\begin{equation*} % \label{eq:AP_rank_0}
\bigl(
N^{2/3}(2-\mu_1^{(\beta)}),\dots,N^{2/3}(2-\mu_k^{(\beta)})
\bigr)
\Longrightarrow
\bigl(
\Lambda_{\beta,1}^{(\infty)},\dots,\Lambda_{\beta,k}^{(\infty)}
\bigr),
\end{equation*}
and
\begin{equation*} % \label{eq:AP_rank_1}
\bigl(
N^{2/3}(2-\lambda_1),\dots,N^{2/3}(2-\lambda_k)
\bigr)
\Longrightarrow
\bigl(
\Lambda_{\beta,1}^{(\theta)},\dots,\Lambda_{\beta,k}^{(\theta)}
\bigr).
\end{equation*}
\end{pro}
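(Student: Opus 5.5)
The plan is to reduce the statement to the two cited results by matching the scalings and the boundary conditions. The first display, for the unperturbed tridiagonal matrix, is exactly the soft-edge theorem \cite[Theorem~1.1]{Ramirez-Rider-Virag11}. The only thing to check is the normalization. Ramírez--Rider--Virág work with the unscaled Dumitriu--Edelman matrix, whose entries are $a_j\sim\mathcal N(0,2)$ and $b_k\sim\chi_{\beta(N-k)}$, scaled by $1/\sqrt\beta$. They show that
\[
N^{1/6}\bigl(2\sqrt N-\widetilde\mu_k\bigr),\qquad k=1,\dots,K,
\]
converges jointly to the bottom eigenvalues of $\mathcal H_\beta$ with Dirichlet condition at $0$. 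Since $H_N^{(\beta)}=\widetilde H/\sqrt N$, we have $\mu_k^{(\beta)}=\widetilde\mu_k/\sqrt N$, and therefore $N^{2/3}(2-\mu^{(\beta)}_k)=N^{1/6}(2\sqrt N-\widetilde\mu_k)$. The Dirichlet eigenvalues are the $\Lambda^{(\infty)}_{\beta,k}$ from the previous proposition, so the first convergence follows immediately.

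For the perturbed matrix, I would invoke \cite[Theorem~1.5]{Bloemendal-Virag11}. That result treats the spiked tridiagonal model in which the $(1,1)$ entry is shifted by a perturbation of critical size. Its embedding goes as follows. One views the rescaled, recentred operator $N^{2/3}(2-H)$ as a discrete second-difference operator on the lattice $N^{-1/3}\mathbb Z_{+}$. The first row of this operator, after the shift by $d$, gives the discrete boundary condition
\[
\frac{f(N^{-1/3})-f(0)}{N^{-1/3}}\approx N^{1/3}(1-d)\,f(0).
\]
With $d=1-\theta N^{-1/3}$, the right-hand side equals $\theta f(0)$, which is the Robin condition $f'(0)=\theta f(0)$. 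The only check here is that our normalization $d\,e_1e_1^\top$ with $d=1-\theta N^{-1/3}$, measured in units where the edge is $2$, corresponds to their critical parametrization $\sqrt{N}(1+\dots)$ of the spike with parameter $w=\theta$ (up to their sign convention). I would verify this by the same $\sqrt{\beta N}$ rescaling as in the first part. Their theorem then gives joint convergence of the top $k$ rescaled eigenvalues to $\Lambda^{(\theta)}_{\beta,1},\dots,\Lambda^{(\theta)}_{\beta,k}$.

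Two further points need to be confirmed. First, the convergence in the cited results is for the ordered eigenvalues jointly, via norm-resolvent convergence of the embedded operators. Simplicity of the limiting spectrum, given in the preceding proposition, guarantees that the ordered lists converge as vectors. Second, and this is the main obstacle, the sign and normalization conventions for the Robin parameter must match: Bloemendal--Virág use $f'(0)=w f(0)$, and their spike is scaled relative to $\sqrt N$ in the unnormalized matrix. I would handle this by explicitly writing the first row of $N^{2/3}(2-\sqrt{\beta N}^{-1}\widetilde H-d e_1e_1^\top)$ and reading off the boundary term. This should produce exactly $\theta$ with no extra factor, because the off-diagonal entries $b_1/\sqrt{\beta N}\approx 1$ contribute the discrete derivative at unit strength.
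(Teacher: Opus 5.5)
Your proposal is correct and follows the paper's route: the paper likewise takes the Dirichlet display from \cite[Theorem~1.1]{Ramirez-Rider-Virag11} and the Robin display from \cite[Theorem~1.5]{Bloemendal-Virag11} with critical parameter $\lim N^{1/3}(1-d)=\theta$, and your identity $N^{2/3}(2-\mu_k^{(\beta)})=N^{1/6}(2\sqrt N-\widetilde\mu_k)$ is exactly the translation between normalizations that is needed. Your appeal to simplicity of the limiting spectrum is superfluous, because the cited theorems already give joint convergence of the ordered lowest eigenvalues.
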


In the case of spiked GOE/GUE with rank \(m > 1\) deformation, one needs the multivariate stochastic Airy operator introduced by Bloemendal and Vir\'ag \cite[Section~1]{Bloemendal-Virag11a}. This operator describes the soft-edge limit of the finite-rank invariant model after keeping track of the whole \(m\)-dimensional spiked subspace. At the discrete level, Bloemendal and Vir\'ag introduce a
canonical $(2m+1)$-diagonal band form. After grouping the
coordinates into blocks of size $m$, this form may equivalently
be viewed as a block-tridiagonal matrix.

Let \(\mathbb F=\mathbb R\) for \(\beta=1\) and
\(\mathbb F=\mathbb C\) for \(\beta=2\). A standard matrix
Brownian motion \(B=(B_x)_{x\geq 0}\) in the self-adjoint
\(m\times m\) matrices over \(\mathbb F\) is normalized so that,
for \(0\leq y\leq x\), \(B_x-B_y\) has the law of
\(\sqrt{m(x-y)}\) times an \(m\times m\) GOE matrix when
\(\beta=1\), and \(\sqrt{m(x-y)}\) times an \(m\times m\) GUE
matrix when \(\beta=2\), where the GOE/GUE matrices have the
normalization specified above.

Equivalently, for \(\beta=1\), the diagonal entries are independent
Brownian motions with variance parameter \(2\), and the
upper-triangular off-diagonal entries are independent standard real
Brownian motions. For \(\beta=2\), the diagonal entries are
independent standard real Brownian motions, and the upper-triangular
off-diagonal entries are independent standard complex Brownian
motions satisfying
\[
  \mathbb E
  \left\lvert B_{ij}(x)-B_{ij}(y)\right\rvert^2
  =x-y.
\]
All these processes are mutually independent, subject to
self-adjoint symmetry.

\begin{equation*} %\label{eq:multivariate_SAO}
\pmb{\mathcal H}_{\beta,m}
=
-\frac{d^2}{dx^2}
+mx\,I_m
+\sqrt{2}\,B_x',
\end{equation*}
acting on \(L^2(\mathbb R_+;\mathbb F^m)\). We impose the self-adjoint boundary condition
\begin{equation*} % \label{eq:Robin_matrix}
f'(0)=Wf(0),
\end{equation*}
where \(W\) is real symmetric for \(\beta=1\) and Hermitian for
\(\beta=2\). Since the law of \(B_x\) is invariant under fixed
orthogonal/unitary conjugation, the law of the spectrum depends on
\(W\) only through its eigenvalues. Hence, for distributional purposes,
we may assume that
\[
W=\operatorname{diag}(\theta_1,\dots,\theta_m).
\]
Writing
\[
\Theta=(\theta_1,\dots,\theta_m)\in\mathbb R^m,
\]
the boundary condition becomes
\begin{equation}\label{eq:boundary_condition_diagonal}
f_i'(0)=\theta_i f_i(0),
\qquad i=1,\dots,m.
\end{equation}
This is precisely the Robin boundary convention used in
\cite[Section~1, Equations~(1.1)--(1.2)]
{Bloemendal-Virag11a}. We record the spectral properties of
\(\pmb{\mathcal H}_{\beta,m}\) that will be used below.

\begin{pro}\label{pro:existence_rank_r}
Let \(m\in\mathbb N\) and let
\(\Theta=(\theta_1,\dots,\theta_m)\in\mathbb R^m\) be fixed.
For \(\beta=1,2\), the operator
\(\pmb{\mathcal H}_{\beta,m}\) with boundary condition
\eqref{eq:boundary_condition_diagonal} has, almost surely, a purely
discrete, simple spectrum bounded from below. We write its eigenvalues
in strictly increasing order as
\begin{equation*} %\label{eq:eigenvalues_matrix_SAO}
\Lambda_{\beta,1}^{\Theta}
<
\Lambda_{\beta,2}^{\Theta}
<
\cdots,
\qquad
\Lambda_{\beta,k}^{\Theta}\longrightarrow+\infty.
\end{equation*}
\end{pro}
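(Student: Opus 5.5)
Existence, discreteness and lower boundedness of the spectrum come from the Bloemendal--Virág construction; the new content is simplicity. For the first three properties I would work with the quadratic form
\[
  \mathfrak q(f)=\int_0^\infty \bigl(\lvert f'\rvert^2+mx\lvert f\rvert^2\bigr)\,dx+\sqrt2\int_0^\infty \langle f, dB_x\, f\rangle+\langle f(0),Wf(0)\rangle
\]
on the weighted space \(L^*=\{f:\int(\lvert f'\rvert^2+(1+x)\lvert f\rvert^2)<\infty\}\), as in \cite[Section~2]{Bloemendal-Virag11a}. The noise term is handled in the standard way by writing \(B_x=\bar B_x+(B_x-\bar B_x)\) with a locally averaged \(\bar B\). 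Then the bound \(\lvert B_x-\bar B_x\rvert\le C(1+\sqrt{\log(1+x)})\), valid almost surely, makes the noise term infinitesimally form-bounded relative to \(\int(\lvert f'\rvert^2+x\lvert f\rvert^2)\). The boundary term is controlled by \(\lvert f(0)\rvert^2\le \epsilon\lVert f'\rVert^2+C_\epsilon\lVert f\rVert^2\).

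This gives closedness and lower boundedness of \(\mathfrak q\). Compactness of the embedding \(L^*\hookrightarrow L^2\) then gives a purely discrete spectrum with \(\Lambda_{\beta,k}^\Theta\to\infty\). All of this is the vector-valued analogue of \cite{Ramirez-Rider-Virag11} and \cite[Theorem~2.2]{Gaudreau_Lamarre-Ghosal-Li-Liao23}, and I would cite rather than redo it.

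For simplicity, the scalar argument of \cite[Lemma~2.7]{Bloemendal-Virag11} fails. That argument uses the fact that a second-order scalar ODE has a one-dimensional solution space under the boundary condition. Here the solution space of \(-f''+mxf+\sqrt2 B'f=\lambda f\) with \(f'(0)=Wf(0)\) is \(m\)-dimensional, so geometric multiplicity up to \(m\) is not excluded deterministically. I would instead derive simplicity from the discrete approximation, which is the route the introduction suggests (``an auxiliary consequence of the arguments'' for the GOE/GUE proofs). The steps are as follows.

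First, take the finite-\(N\) spiked GOE/GUE with \(d_i=1-\theta_iN^{-1/3}\). By \cite[Theorem~1.1]{Bloemendal-Virag11a}, the rescaled top eigenvalues converge jointly in law to \((\Lambda^\Theta_{\beta,k})_k\), in the sense of the ordered eigenvalues counted with multiplicity.

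Second, I would show that for each fixed \(k\) and \(\epsilon>0\),
\[
  \limsup_{\delta\to0}\limsup_{N\to\infty}\mathbb P\bigl(N^{2/3}(\lambda_k-\lambda_{k+1})<\delta\bigr)<\epsilon .
\]
This is a level-repulsion (gap) estimate for the spiked model at the edge. By the portmanteau theorem applied to the closed set \(\{\Lambda_{k+1}-\Lambda_k\le\delta'\}\), it then gives \(\mathbb P(\Lambda^\Theta_{\beta,k}=\Lambda^\Theta_{\beta,k+1})=0\) for all \(k\).

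To get the gap estimate, I would exploit the eigenvector--eigenvalue / Schur complement structure the paper uses. The eigenvalues of \(Y=H+P\) outside the spectrum of \(H\) are the zeros of \(\det(D^{-1}-\mathbf W^*G_H(z)\mathbf W)\), an \(m\times m\) determinant built from the resolvent of the unperturbed GOE/GUE. Conditioning on \(H\)'s eigenvalues, the matrix of overlaps \(\mathbf W^*\u_j\) is Haar-distributed (orthogonal or unitary). Consequently \(\mathbf W^*G_H\mathbf W=\sum_j (z-\mu_j)^{-1}\mathbf g_j\mathbf g_j^*\) with essentially i.i.d.\ Gaussian vectors \(\mathbf g_j\). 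This is an absolutely continuous perturbation, so two consecutive zeros coalescing should have probability controlled uniformly in \(N\) after the edge rescaling. Eigenvalues of \(H\) itself have the unperturbed edge repulsion (the Airy\(_\beta\) process is simple), and interlacing with \(H\) reduces the problem to zeros within distinct gaps or a fixed window.

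An alternative, probably cleaner, route works directly in the limit. Let \(M(\lambda)\) denote the \(m\times m\) Weyl--Titchmarsh matrix, i.e.\ the Dirichlet-to-Neumann map of the operator with Dirichlet condition at \(0\), which the paper's Airy--Green function generalizes. Eigenvalues of the \(W\)-problem are the zeros of \(\det(M(\lambda)-W)\), and multiplicity equals \(\dim\ker(M(\lambda)-W)\). Since \(\partial_\lambda M\) is positive definite between Dirichlet eigenvalues, a double eigenvalue forces a codimension-\(\ge 3\) (real) or \(\ge 4\) (complex) coincidence for the Hermitian matrix \(M(\lambda)-W\). Randomizing the boundary data, averaging over Haar conjugation of \(W\) (allowed by invariance of \(B\)) should then show this has probability zero for the fixed \(\Theta\).

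I expect the main obstacle to be making this last codimension argument rigorous. The randomness in \(M(\lambda)\) must be shown non-degenerate enough that \(\{\lambda:\det(M(\lambda)-W)\ \text{has a multiple root from a kernel of dimension}\ \ge2\}\) is null. I would first try the finite-\(N\) route, since there the Haar structure gives an explicit absolutely continuous density.
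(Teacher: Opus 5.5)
\paragraph{Where the proposal stands.} Your handling of existence, discreteness and lower boundedness (cite Bloemendal--Vir\'ag) matches the paper. You also correctly see that simplicity for $m\ge2$ is the real content and that the scalar argument of \cite[Lemma~2.7]{Bloemendal-Virag11} does not apply. But neither of your routes proves simplicity, so there is a genuine gap.

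\paragraph{Why the routes do not close.} In the first route, everything rests on the uniform edge gap estimate $\lim_{\delta\to0}\limsup_N\mathbb P(N^{2/3}(\lambda_k-\lambda_{k+1})<\delta)=0$, and you give no mechanism for it. Absolute continuity of the vectors $\mathbf g_j$ gives simplicity at each fixed $N$, but nothing uniform in $N$ after rescaling. Treating the rank-$m$ perturbation all at once through $\det(D^{-1}-\mathbf W^\ast G_H(z)\mathbf W)$ also loses the scalar monotone structure. Interlacing only gives $\mu_k\le\lambda_k\le\mu_{k-m}$, so up to $m$ eigenvalues of $Y$ can sit in one gap of $H$. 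That is exactly the case where coalescence must be excluded, and your reduction does not address it. (A minor point: to bound $\mathbb P(\Lambda_k=\Lambda_{k+1})$ from above, apply portmanteau to the open set $\{\Lambda_{k+1}-\Lambda_k<\delta\}$; the closed-set version gives the inequality in the wrong direction.) The second route is unproved, as you acknowledge. Moreover, Haar averaging over conjugates of $W$ cannot supply the missing non-degeneracy in general. The statement allows repeated $\theta_i$, e.g.\ $W=\theta I_m$, whose conjugation orbit is a single point. There all the needed randomness would have to come from $M(\lambda)$ itself.

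\paragraph{The missing idea: add the spikes one at a time and induct on the rank.}
\begin{enumerate}
\item Write $H_N^{(D)}=\widehat H_N+d_re_re_r^\top$, where $\widehat H_N$ is the rank-$(r-1)$ model. The rescaled eigenvalues $N^{2/3}(\lambda_k-2)$ are exactly the zeros of the scalar function $\widetilde{\mathcal G}_{N,r}(w)-\theta_r/d_r$. Its poles are the rescaled eigenvalues of $\widehat H_N$, and its weights are $N|\langle e_r,\widehat\u_{m,N}\rangle|^2$.
\item Residual orthogonal/unitary invariance, together with the $O(N^{-1/3})$ overlaps with $e_1,\dots,e_{r-1}$ (from the overlap theorem at rank $r-1$), gives joint convergence of poles and weights. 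The limits are the rank-$(r-1)$ points and independent $\beta^{-1}\chi^2_\beta$ weights.
\item Add the imaginary-axis asymptotics from the local law. Then \cite[Theorem~8.13]{Bykhovskaya-Gorin-Sodin25} gives $\widetilde{\mathcal G}_{N,r}\xrightarrow{\mathrm{mer}}\mathcal G_{r-1}$.
\item The limit $\mathcal G_{r-1}$ is strictly decreasing between its poles. These poles are simple by the induction hypothesis and carry almost surely positive residues. Hence $\mathcal G_{r-1}=\theta_r$ has exactly one root in each gap, and these roots are automatically strictly ordered.
\item By Lemma~\ref{lem:simple-zero-stability-meromorphic}, the finite-$N$ roots converge to these limit roots. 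Comparing with Proposition~\ref{pro:convergence_rank_r} identifies their law with that of $(a^{\Theta}_{\beta,k})_k=(-\Lambda^\Theta_{\beta,k})_k$.
\end{enumerate}
This yields simplicity with no quantitative level-repulsion estimate. It works for every $\Theta$, including repeated entries, and it is what the introduction means by an ``auxiliary consequence'' of the overlap proofs.
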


\begin{remark}
The lower boundedness of the spectrum in
Proposition~\ref{pro:existence_rank_r} follows from the quadratic-form
estimate \cite[Equation~(3.16)]{Bloemendal-Virag11a}, while its
existence and discreteness follow from
\cite[Proposition~3.7 and Remark~3.8]{Bloemendal-Virag11a}.
In the scalar case $m=1$, simplicity follows from
\cite[Lemma~2.7]{Bloemendal-Virag11}. For $m\ge2$, simplicity is
established as part of our proofs in
Sections~\ref{subsec:proof_mult_rank_GOE} and
\ref{subsec:proof_GUE} for $\beta=1$ and $\beta=2$, respectively.
We note that the simplicity conclusion can also be deduced
from the very recent multilevel construction of Landon and Stojimirovic
\cite[Equations~(1.7)--(1.8), Proposition~1.1, and
Theorem~1.2]{Landon-Stojimirovic26}, in which the successive limiting
point processes are strictly interlacing.
\end{remark}

When \(m=1\) and \(\Theta=(\theta)\), the operator
\(\pmb{\mathcal H}_{\beta,1}\) coincides with
\(\mathcal H_\beta\) defined in \eqref{eq:SAO} with Robin boundary
parameter \(\theta\), for \(\beta=1,2\). The next proposition is a
rescaled special case of
\cite[Theorem~1.2]{Bloemendal-Virag11a} for \(\beta=1,2\).

\begin{pro}\label{pro:convergence_rank_r}
Let \(H_N\) be a GOE matrix when \(\beta=1\) and a GUE matrix when \(\beta=2\). Let
\begin{align*}
  P = \sum_{i=1}^md_ie_ie_i^\ast = \diag(d_1,\dots,d_m,0,\dots,0), \qquad d_i=1-\theta_iN^{-1/3}, \qquad i= {} 1,\dots,m,
\end{align*}
with $\theta_i \in \realR$. Write
\[
\lambda_1\ge\cdots\ge\lambda_N
\]
for the eigenvalues of \(H_N+P\). Then, for each fixed \(k\ge1\),
\[
\bigl(
N^{2/3}(2-\lambda_1),\dots,N^{2/3}(2-\lambda_k)
\bigr)
\Longrightarrow
\bigl(
\Lambda_{\beta,1}^{(\theta_1,\dots,\theta_m)},\dots,
\Lambda_{\beta,k}^{(\theta_1,\dots,\theta_m)}
\bigr).
\]
\end{pro}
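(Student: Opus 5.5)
The plan is to obtain Proposition~\ref{pro:convergence_rank_r} from the general theorem of Bloemendal and Vir\'ag \cite[Theorem~1.2]{Bloemendal-Virag11a} by matching normalizations; no new analysis is needed. Their setting is a spiked invariant ensemble, written in an unnormalized scaling. The spike sits at the critical value, perturbed on the scale \(N^{-1/3}\) in the direction of a matrix \(W\). The conclusion is joint convergence of the rescaled top eigenvalues to the bottom eigenvalues of the multivariate stochastic Airy operator with boundary condition \(f'(0)=Wf(0)\). So the steps are as follows.

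First, rewrite \(H_N+P\) in the Bloemendal--Vir\'ag normalization. Our GOE/GUE has off-diagonal variance \(1/N\) and spectral edge at \(2\). Multiplying by \(\sqrt N\) (or the constant their convention requires) gives their model, with critical spike strength \(\sqrt N\) in place of \(1\). The perturbation \(-\theta_iN^{-1/3}\) becomes \(-\theta_iN^{1/6}\), which is exactly their critical window with \(W=\diag(\theta_1,\dots,\theta_m)\). By orthogonal/unitary invariance we may place the spikes on \(e_1,\dots,e_m\), matching their block-tridiagonal (band) reduction, which fixes the first \(m\) coordinates.

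Second, match the edge scaling. Their theorem gives convergence of \(m^{-2/3}N^{1/6}(2\sqrt N-\lambda_k\sqrt N)\) or a similar constant multiple. The limit operator is theirs, \(-\partial_x^2+x+\sqrt{2/(m)}\,B_x'\), with some normalization of \(B\). The operator \(\pmb{\mathcal H}_{\beta,m}\) above is written with \(mx\) and \(\sqrt2B'\). Perform the spatial change of variables \(x\mapsto cx\), use Brownian scaling \(B_{cx}\overset{d}{=}\sqrt c\,B_x\), and multiply the operator by a constant. Choose \(c\) so that the drift coefficient, the noise coefficient, and the eigenvalue prefactor all match \(N^{2/3}(2-\lambda_k)\). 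Under \(x\mapsto cx\) the Robin condition \(f'(0)=Wf(0)\) becomes \(f'(0)=cWf(0)\). I would therefore check that the constants are consistent, so that \(\Theta\) is not rescaled; this is the phrase ``rescaled special case.'' I expect this to be the only real obstacle: it is pure bookkeeping, but an error gives the wrong boundary parameter. I would verify it against the case \(m=1\), where the result must reduce to Proposition~\ref{pro:convergence_rank_1} with \(\mathcal H_\beta\) and parameter \(\theta\).

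Third, deduce joint convergence of the first \(k\) eigenvalues. Bloemendal--Vir\'ag prove convergence of the operators in a norm-resolvent/quadratic-form sense, which yields convergence of the ordered bottom eigenvalues jointly for each fixed \(k\). Finiteness, discreteness and lower boundedness come from Proposition~\ref{pro:existence_rank_r}. Simplicity is not needed for the convergence statement itself, since the ordered eigenvalue map is continuous even with multiplicities. The statement is then the pushforward of their result under the affine rescaling identified in the second step.
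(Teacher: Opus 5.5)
Your plan is the same as the paper's: the paper gives no separate argument and reads the statement off \cite[Theorem~1.2]{Bloemendal-Virag11a} after the rescaling \(\sqrt N(H_N+P)=\sqrt N H_N+\sqrt N\,\diag(d_1,\dots,d_m,0,\dots,0)\), under which \(N^{1/3}(1-d_i)=\theta_i\) and \(N^{1/6}\bigl(2\sqrt N-\sqrt N\lambda_k\bigr)=N^{2/3}(2-\lambda_k)\). No spatial change of variables is needed, because \(\pmb{\mathcal H}_{\beta,m}\) with \(mx\), \(\sqrt2\,B_x'\) and \(f'(0)=Wf(0)\) is already Bloemendal--Vir\'ag's own normalization; also, your \(m=1\) check could not detect an \(m\)-dependent factor such as \(m^{1/3}\) in \(\Theta\), so those constants must be read off their definitions directly.
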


\subsubsection{Airy-Green functions}

The Airy\(_\beta\) point process \(\{a_{\beta,k}\}_{k=1}^\infty\) is the negative Dirichlet spectrum of the scalar stochastic Airy operator:
\begin{equation}\label{airy process}
a_{\beta,k}:=-\Lambda_{\beta,k}^{(\infty)},
\qquad k\ge1,
\end{equation}
arranged in decreasing order.

For \(\beta\in\{1,2\}\) and a vector \(\Theta=(\theta_1,\dots,\theta_q)\), \(q\ge1\), we write
\begin{equation*} % \label{eq:defn_a_beta_j^Theta}
a_{\beta,j}^{\Theta}
:=
-\Lambda_{\beta,j}^{(\theta_1,\dots,\theta_q)},
\qquad j\ge1,
\end{equation*}
for the negative spectrum of the \(q\)-component multivariate stochastic Airy operator with boundary parameters \(\theta_1,\dots,\theta_q\), arranged in decreasing order. We refer to this as the higher-rank Airy$_\beta$ point process depending on \(\Theta\) for $\beta=1,2$ \cite[Remark 4.10]{Bykhovskaya-Gorin-Sodin25}. For notational convention, we also allow \(\Theta=()\), and then let \(a_{\beta,j}^{()}=a_{\beta,j}\) defined in \eqref{airy process}.

We now recall the Airy-Green function defined in \cite{Bykhovskaya-Gorin-Sodin25}. For \(\beta=1\), the construction associated with the Airy$_1$ point process is given in \cite[Theorem~8.1]{Bykhovskaya-Gorin-Sodin25}; for general \(\beta>0\), the corresponding extension associated with the Airy$_\beta$ point process is indicated in \cite[Remark~4.5, Equation~(4.5)]{Bykhovskaya-Gorin-Sodin25} and we state its definition in the following proposition.

\begin{pro}\label{prop:beta_airy_green}
Let \(\{a_{\beta,j}\}_{j\ge1}\) be the Airy\(_\beta\) point process, arranged in decreasing order, and let \(\{\xi_{\beta,j}\}_{j\ge1}\) be independent and identically distributed (i.i.d.) random variables, independent of \(\{a_{\beta,j}\}_{j\ge1}\), with \(\xi_{\beta,j}^2\sim\chi_\beta^2\). Then, almost surely, the Airy-Green function for general $\beta>0$
\begin{equation}\label{eq:airy_green_beta_limit}
    \mathcal G^{(\beta)}(w)
    :=
    \lim_{x\to-\infty}
    \left[
      \sum_{j:\,a_{\beta,j}>x}
      \frac{\beta^{-1}\xi_{\beta,j}^2}{w-a_{\beta,j}}
      -\frac{2}{\pi}\sqrt{-x}
    \right]
  \end{equation}
  exists for every \(w\in\mathbb C\setminus\{a_{\beta,j}:j\ge1\}\), uniformly on compact subsets avoiding the poles.
\end{pro}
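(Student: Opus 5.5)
Write $F(x):=\sum_{j:\,a_{\beta,j}>x}\beta^{-1}\xi_{\beta,j}^2/(w-a_{\beta,j})-\frac{2}{\pi}\sqrt{-x}$. The plan is to show that $F(x)$ is almost surely Cauchy as $x\to-\infty$. We split the sum into its mean part and its fluctuation part and control each separately.

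\textbf{Step 1: reduction to a real evaluation point.} Fix a compact $K\subset\compC\setminus\{a_{\beta,j}\}$. Only finitely many $a_{\beta,j}$ lie above any fixed level, so we may fix $x_0$ below all of $\Re K$ and all poles close to $K$. For $a_{\beta,j}<x_0-1$ we write
\[
\frac{1}{w-a}=\frac{1}{w_0-a}+\frac{w_0-w}{(w-a)(w_0-a)},
\]
with a fixed real reference point $w_0$. The second term is summable uniformly on $K$, provided $\sum_j \xi_{\beta,j}^2/a_{\beta,j}^2<\infty$ almost surely. Since $\xi_{\beta,j}^2$ are i.i.d.\ with mean $\beta$, this follows from $\sum_j a_{\beta,j}^{-2}<\infty$. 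That in turn follows from the rigidity $a_{\beta,j}\sim-(3\pi j/2)^{2/3}$, which makes $|a_{\beta,j}|^{-2}\asymp j^{-4/3}$. It therefore suffices to prove convergence of $F$ at the single real point $w_0$.

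\textbf{Step 2: the fluctuation part.} Replace $\xi_{\beta,j}^2$ by $\beta$, writing $\beta^{-1}\xi_{\beta,j}^2=1+\eta_j$ with $\eta_j$ i.i.d., mean zero, finite variance, and independent of $\{a_{\beta,j}\}$. Conditionally on the point process, $\sum_j \eta_j/(w_0-a_{\beta,j})$ is a sum of independent mean-zero terms. Its conditional variance is $\sum_j (w_0-a_{\beta,j})^{-2}<\infty$ almost surely, again by Step 1. Kolmogorov's two-series theorem then gives almost sure convergence of this part. The partial sums indexed by $x$ are monotone truncations of this ordered series, so the limit as $x\to-\infty$ exists.

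\textbf{Step 3: the deterministic-weight sum.} This is the main obstacle: we must show that
\[
S(x):=\sum_{a_{\beta,j}>x}\frac{1}{w_0-a_{\beta,j}}-\frac{2}{\pi}\sqrt{-x}
\]
converges almost surely. Let $N(x)=\#\{j:a_{\beta,j}>x\}$. The Airy$_\beta$ density satisfies $N(x)=\frac{2}{3\pi}(-x)^{3/2}+\mathcal E(x)$, and
\[
\int_x^{0}\frac{\sqrt{-y}}{\pi}\,\frac{dy}{w_0-y}=\frac{2}{\pi}\sqrt{-x}+\bigO(1)+\text{convergent}.
\]
Integrating by parts, $S(x)$ equals a convergent deterministic quantity plus a boundary term $\mathcal E(x)/(w_0-x)$ plus $\int_x \mathcal E(y)(w_0-y)^{-2}\,dy$. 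We therefore need a bound of the form $|\mathcal E(y)|\le C(\omega)\,|y|^{1-\delta}$ almost surely, or at least some almost sure sublinear control with integrable tails. I would obtain this from the known rigidity and counting estimates for the Airy$_\beta$ process: through the Riccati/diffusion description of the stochastic Airy operator of \cite{Ramirez-Rider-Virag11}, the counting function is the number of explosions of the Riccati diffusion. That number has mean $\frac{2}{3\pi}|y|^{3/2}+\bigO(\log|y|)$ and variance $\bigO(\log|y|)$. A Borel--Cantelli argument along $y=-2^{n}$ then yields $\mathcal E(y)=\bigO(|y|^{\epsilon})$ almost surely, and monotonicity of $N$ interpolates between the dyadic points. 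Alternatively, one can import the argument of \cite[Remark~4.5]{Bykhovskaya-Gorin-Sodin25} for the case $\beta=1$, which uses the same ingredients.

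**Step 4: uniformity.** Combining Steps 1--3 gives existence of the limit at $w_0$. Uniform convergence on $K$ then follows from the uniform summability of the correction term in Step 1.
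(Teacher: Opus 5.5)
The gap is in how Step~3 obtains the bound $|\mathcal E(y)|\le C(\omega)|y|^{1-\delta}$, which is the only place where general $\beta$ matters. Everything else follows the paper's architecture: subtract a fixed reference point, write $\beta^{-1}\xi_{\beta,j}^2=1+\eta_j$, handle the $\eta_j$-part conditionally on the points by Kolmogorov's theorem, and handle the unit-weight part by rigidity. The paper uses the reference point $i$ through $b_j(w)$, which spares you keeping a real $w_0$ away from the poles. Your counting-function integration by parts is the dual of the paper's comparison of $a_{\beta,j}$ with $-(3\pi j/2)^{2/3}$, and the sublinear bound on $\mathcal E$ is the right requirement.

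\textbf{The dyadic Borel--Cantelli step fails.} Between $y=-2^n$ and $y=-2^{n+1}$ the main term $\frac{2}{3\pi}|y|^{3/2}$ increases by order $2^{3n/2}$. So monotonicity of $N$ only gives $|\mathcal E(y)|\lesssim |y|^{3/2}$, which is useless. Reaching $|y|^{\epsilon}$ would need a mesh of spacing $\asymp |y|^{\epsilon-1/2}$, i.e.\ about $2^{3n/2}$ points per block. A Chebyshev bound with variance $O(\log|y|)$ cannot absorb that union bound.

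\textbf{The inputs are unsupported.} You assert mean $\frac{2}{3\pi}|y|^{3/2}+O(\log|y|)$ and variance $O(\log|y|)$ for every $\beta>0$ without a source. These are stronger than anything the paper has: for general $\beta$ it only has rigidity at scale $j^{1/6+\varepsilon}$. The paper remarks that this limitation is why it cannot reproduce the sharper $\beta=1$ estimates of \cite{Bykhovskaya-Gorin-Sodin25}. Your fallback of importing that $\beta=1$ argument does not help either. Its rigidity rests on Soshnikov's determinantal variance asymptotics for $\beta=2$, transferred by the Forrester--Rains relation, and neither has a general-$\beta$ analogue.

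\textbf{Repair.} Since only sublinear control is needed, use an input valid for all $\beta$, as the paper does. Take the tail bound $\mathbb P(|a_{\beta,k}+(3\pi k/2)^{2/3}|\ge Ck^{1/6+\varepsilon})\le e^{-k^{\chi}}$ of \cite[Theorem~1 and Corollary~2]{Ashbury_Bridgwood22} and apply Borel--Cantelli over the discrete index $k$, as in Lemma~\ref{lem:airy-bound}; no interpolation is needed. Inverting gives $\mathcal E(y)=O(|y|^{3/4+\varepsilon'})$ almost surely. This makes your boundary term $\mathcal E(x)/(w_0-x)$ vanish and makes $\int |\mathcal E(y)|\,y^{-2}\,dy$ finite. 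Alternatively, even granting an $O(\log|y|)$ variance, use a mesh of spacing $|y|^{\gamma-1/2}$ with $\gamma\in(1/2,1)$. Then the union bound is summable, $\sum_n n\,2^{n(3/2-3\gamma)}<\infty$, which gives $\mathcal E(y)=O(|y|^{\gamma})$ and also suffices.
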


In our paper, we also define the Airy-Green function associated with the higher-rank Airy$_\beta$ process associated with the boundary parameters \(\Theta=(\theta_1,\dots,\theta_q)\) with $\beta = 1, 2$. 
\begin{pro}\label{pro:multispike_airy_green}
  Let \(\beta\in\{1,2\}\) and \(\{\xi_{\beta,j}\}_{j\ge1}\) be i.i.d.\ random variables, independent of \(\{a_{\beta,j}^{\Theta}\}_{j\ge1}\), with \(\xi_{\beta,j}^2\sim\chi_\beta^2\). Define the Airy-Green function
  \begin{equation}\label{eq:defn_Airy_Green_multiple}
    \mathcal G^{(\beta)}_{\Theta}(w)
    :=
    \lim_{x\to-\infty}
    \left[
      \sum_{j:\,a_{\beta,j}^{\Theta}>x}
      \frac{\beta^{-1}\xi_{\beta,j}^2}{w-a_{\beta,j}^{\Theta}}
      -\frac{2}{\pi}\sqrt{-x}
    \right].
  \end{equation}
  For every fixed finite vector \(\Theta\), the limit in \eqref{eq:defn_Airy_Green_multiple} exists almost surely for every
  \[
    w\in
    \mathbb C\setminus
    \bigl\{
    a_{\beta,j}^{\Theta}:j\ge1
    \bigr\},
  \]
  uniformly on compact subsets avoiding the poles.
\end{pro}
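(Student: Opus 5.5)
We must show that the limit in \eqref{eq:defn_Airy_Green_multiple} exists. The plan is to decompose it into a random fluctuation part and a deterministic part. Write $S_x(w)=\sum_{a_{\beta,j}^{\Theta}>x}\beta^{-1}\xi_{\beta,j}^2/(w-a_{\beta,j}^{\Theta})$ and split
\[
S_x(w)=\sum_{a_{\beta,j}^{\Theta}>x}\frac{\beta^{-1}(\xi_{\beta,j}^2-\beta)}{w-a_{\beta,j}^{\Theta}}+\sum_{a_{\beta,j}^{\Theta}>x}\frac{1}{w-a_{\beta,j}^{\Theta}} .
\]
We condition on the point process $\{a^{\Theta}_{\beta,j}\}$. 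The first sum is then a sum of independent centered terms with variances $2\beta^{-1}|w-a_{\beta,j}^{\Theta}|^{-2}$.

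\textbf{Rigidity input.} Both sums are controlled once we know rigidity of the points: almost surely
\[
a^{\Theta}_{\beta,j}=-\Bigl(\tfrac{3\pi j}{2}\Bigr)^{2/3}+o\bigl(j^{2/3}\bigr),
\qquad\text{equivalently}\qquad
\#\{j:a^\Theta_{\beta,j}>x\}=\tfrac{2}{3\pi}(-x)^{3/2}+o\bigl((-x)^{1/2}\bigr)\ \text{ type bounds}.
\]
For the Dirichlet case $\Theta=()$ this is the input already used in Proposition~\ref{prop:beta_airy_green}. For general $\Theta$ I would reduce to it by interlacing. A rank-one Robin change of boundary condition for one component, and adding one component (via the Bloemendal--Vir\'ag band/minor structure, or the finite-$N$ interlacing of $H_N+P$ with $H_N+P'$ for $P-P'$ of rank one, passed to the limit through Proposition~\ref{pro:convergence_rank_r}), shifts the counting function by at most a bounded constant. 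This gives
\[
\bigl|\#\{j:a^{\Theta}_{\beta,j}>x\}-\#\{j:a_{\beta,j}>x\}\bigr|\le C(q)
\]
for all $x$. Since the $\beta=1,2$ Airy point processes are determinantal or Pfaffian with known counting variance $O(\log|x|)$, a Borel--Cantelli argument along $x=-n$ gives the needed almost-sure counting asymptotics.

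\textbf{Fluctuation sum.} With the counting asymptotics, $\sum_j|a^\Theta_{\beta,j}|^{-2}\asymp\sum_j j^{-4/3}<\infty$. Kolmogorov's two-series theorem, applied conditionally on the points, then shows that the first sum converges almost surely as $x\to-\infty$. To get the statement simultaneously for all $w$ and uniformly on compacts, I would apply this at a single $w_0$ and use that
\[
\frac{1}{w-a}-\frac{1}{w_0-a}=\frac{w_0-w}{(w-a)(w_0-a)}=O(|a|^{-2})
\]
uniformly for $w$ in a compact set. The differences therefore converge absolutely and uniformly on compacts avoiding the poles.

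\textbf{Deterministic sum.} It remains to show that $\sum_{a^\Theta_{\beta,j}>x}(w-a^\Theta_{\beta,j})^{-1}-\frac{2}{\pi}\sqrt{-x}$ converges. Writing $N(t)=\#\{j:a^\Theta_{\beta,j}>t\}$, this sum equals $\int_{(x,\infty)}(w-t)^{-1}\,d(-N(t))$. Comparing with the deterministic density $\frac1\pi\sqrt{-t}$, whose integral against $(w-t)^{-1}$ over $(x,0)$ is $\frac{2}{\pi}\sqrt{-x}+O(1)$ plus a convergent part, and integrating by parts, the remainder is
\[
\int \frac{N(t)-\frac{2}{3\pi}(-t)^{3/2}}{(w-t)^2}\,dt .
\]
This converges provided $N(t)-\frac{2}{3\pi}(-t)^{3/2}=O\bigl(|t|^{1-\epsilon}\bigr)$. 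That bound follows from the log-variance estimate together with the $O(1)$ interlacing comparison. Uniformity on compacts follows from the same integration by parts.

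\textbf{Main obstacle.} The hardest step is the rigidity of the counting function for the higher-rank process uniformly as $t\to-\infty$. The sharp counting bound for the unperturbed Airy$_{1}$/Airy$_2$ processes must be transferred through the $O(1)$ interlacing. I would first try to obtain this interlacing at the finite-$N$ level, via Weyl interlacing for rank-one perturbations, and pass it to the limit through Proposition~\ref{pro:convergence_rank_r}. This reduces everything to the $\Theta=()$ case, which is handled in Proposition~\ref{prop:beta_airy_green}.
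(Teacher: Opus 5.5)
Your proposal is correct and follows essentially the paper's route. The paper also transfers rigidity from the unperturbed Airy$_\beta$ points to $a^{\Theta}_{\beta,j}$ by finite-$N$ rank-$l$ interlacing, obtaining $a_j\le a_j^{\Theta}\le a_{j-l}$ on a joint subsequential-limit coupling (Lemma~\ref{thm:robin_airy_locations}). It then reruns the argument of \cite[Theorem~8.1]{Bykhovskaya-Gorin-Sodin25}: centered weights via Kolmogorov, and the deterministic part via termwise comparison with $-(3\pi j/2)^{2/3}$, which is equivalent to your counting-function integration by parts.

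Two small points. First, passing the interlacing to the limit needs such a joint coupling plus a diagonal argument, since Proposition~\ref{pro:convergence_rank_r} only gives marginal convergence. Second, the $o((-x)^{1/2})$ in your first display is stronger than Chebyshev plus Borel--Cantelli delivers; however, only the $O(|t|^{1-\epsilon})$ bound you later identify is needed, and it already follows from Lemma~\ref{lem:airy-bound}-type rigidity.
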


For notational convention, we also allow \(\Theta=()\), and then let \(\mathcal{G}_{()}^{(\beta)}(w)=\mathcal{G}^{(\beta)}(w)\) defined in \eqref{eq:airy_green_beta_limit}.

For later use, we record the following asymptotic behavior of the
Airy Green functions at infinity in the right half plane.

\begin{lem} \label{lem:beta_airy_green_infty}
  Let \(\mathcal G_\star\) denote either \(\mathcal G^{(\beta)}\) defined in \eqref{eq:airy_green_beta_limit}, or \(\mathcal G^{(\beta)}_{\Theta}\) defined in \eqref{eq:defn_Airy_Green_multiple}.
  \begin{enumerate}
  \item \label{enu:lem:beta_airy_green_infty:1} 
    Almost surely,
    \[
      \lim_{\substack{|w|\to\infty\\ \Re w\ge0}}
      \bigl|\mathcal G_\star(w)+\sqrt w\bigr|=0,
    \]
    where the square root is the principal branch, positive on \((0,\infty)\). Consequently,
    \[
      \mathcal G_\star(x)\longrightarrow-\infty,
      \qquad x\to+\infty.
    \]
  \item \label{enu:lem:beta_airy_green_infty:2}  
    If, in addition, \(\beta\in\{1,2\}\), then, along the positive real axis,
    \begin{equation*} % \label{eq:key_formula_only_beta_12}
      \begin{pmatrix}
        x^{1/4}\bigl(\mathcal G_\star(x)+\sqrt{x}\bigr)\\[0.4em]
        x^{1/4}\left(
        -\dfrac{1}{\mathcal G_\star'(x)}-2\sqrt{x}
        \right)
      \end{pmatrix}
      \Longrightarrow
      \mathcal N\!\left(
        \begin{pmatrix}0\\0\end{pmatrix},
        \frac1\beta
        \begin{pmatrix}
          1&-1\\
          -1&2
        \end{pmatrix}
      \right).
    \end{equation*}
    Moreover, almost surely,
    \[
      \mathcal G_\star'(x)
      =
      -\frac1{2\sqrt{x}}+o(x^{-1}),
      \qquad
      \mathcal G_\star''(x)
      =
      \frac1{4x^{3/2}}+o(x^{-2}).
    \]
  \end{enumerate}
\end{lem}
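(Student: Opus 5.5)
Part (1) will come from splitting $\mathcal G_\star$ into a deterministic leading term plus a centered random part, using the one-point density of the Airy point process. I write
\[
\mathcal G_\star(w)=\lim_{x\to-\infty}\Bigl[\sum_{a_j>x}\frac{\beta^{-1}\xi_j^2}{w-a_j}-\frac{2}{\pi}\sqrt{-x}\Bigr].
\]
For large negative $a$ the points have density $\rho(a)\approx\frac{1}{\pi}\sqrt{-a}$, by the semicircle rigidity $a_j\approx-(3\pi j/2)^{2/3}$. Since $\mathbb E\,\beta^{-1}\xi_j^2=1$, the deterministic comparison is
\[
\int_x^0\frac{\sqrt{-a}}{\pi(w-a)}\,da-\frac{2}{\pi}\sqrt{-x}.
\]
With $a=-s^2$ this becomes $\frac{2}{\pi}\int_0^{\sqrt{-x}}\frac{s^2}{w+s^2}\,ds-\frac2\pi\sqrt{-x}$. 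This converges to $-\frac2\pi\int_0^\infty\frac{w}{w+s^2}\,ds=-\sqrt w$ for $\Re w\ge0$, $w\ne0$.

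It then remains to show that the remainder $\mathcal G_\star(w)+\sqrt w\to0$. I would bound three pieces separately:
\begin{itemize}
\item \emph{Finitely many top points.} Their contribution is $O(1/|w|)$.
\item \emph{Fluctuations of the weights.} The term $\sum_j(\beta^{-1}\xi_j^2-1)/(w-a_j)$ has conditional variance $\sum_j\frac{2/\beta}{|w-a_j|^2}\asymp\int\frac{\sqrt{s}}{|w|^2+s^2}\,ds\asymp|w|^{-1/2}$. This tends to zero; an a.s.\ statement follows by a Borel--Cantelli argument on a lattice plus Lipschitz control in $w$.
\item \emph{Point positions.} The deviation of the counting function from $\frac{2}{3\pi}(-a)^{3/2}$ is controlled via integration by parts, using the rigidity $|a_j+(3\pi j/2)^{2/3}|\le j^{-1/3+\epsilon}$ for large $j$. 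This holds a.s.\ for Airy$_\beta$ (from the SAO / Ramírez--Rider--Virág estimates) and for the higher-rank process (finite-rank interlacing with the rank-zero process). That gives an error $O(|w|^{-1/2+\epsilon})$.
\end{itemize}
The limit on the real axis, $\mathcal G_\star(x)\to-\infty$, then follows immediately.

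For part (2), $\beta\in\{1,2\}$, I would condition on the points and exploit the exact Gaussian structure in $\xi$. The first coordinate, $x^{1/4}(\mathcal G_\star+\sqrt x)$, splits into a $\xi$-fluctuation part and a point-fluctuation part.
\begin{itemize}
\item \emph{$\xi$-fluctuation part.} This is $x^{1/4}\sum_j(\beta^{-1}\xi_j^2-1)/(x-a_j)$, a sum of independent centered terms with no dominant summand. Lindeberg's CLT applies, with covariance computed from $\sum_j\frac{1}{(x-a_j)^{k}}$.
\item \emph{Point-fluctuation part.} I must show this is $o(x^{-1/4})$. I would use known log-variance bounds for linear statistics of the Airy$_\beta$ process: the linear statistic $\sum_j f(a_j)$ for smooth $f(a)=1/(x-a)$ has variance $O(\log)$ times $\|f'\|$-type norms, which here is $O(x^{-2}\log x)$. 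This is where I expect the \emph{main obstacle}: obtaining such a linear-statistic variance bound for the Airy$_1$ and higher-rank cases. I would derive it from the GOE/GUE finite-$N$ resolvent local law (optimal rigidity) and pass to the limit via Propositions~\ref{pro:convergence_rank_1} and~\ref{pro:convergence_rank_r}.
\end{itemize}

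The second coordinate comes from the derivative
\[
\mathcal G_\star'(x)=-\sum_j\frac{\beta^{-1}\xi_j^2}{(x-a_j)^2},
\]
with a deterministic centering $\frac{1}{\pi}\int_0^\infty\frac{\sqrt{s}}{(x+s)^2}\,ds=\frac{1}{2\sqrt x}$. Its fluctuation is the sum $\sum_j(\beta^{-1}\xi_j^2-1)/(x-a_j)^2$. I would form the joint CLT for the vector of the two sums $\sum(\beta^{-1}\xi_j^2-1)(x-a_j)^{-k}$, $k=1,2$. The covariance entries $\frac{2}{\beta}\sum_j(x-a_j)^{-k-l}$ are asymptotically $\frac{2}{\beta}\cdot\frac1\pi\int_0^\infty\frac{\sqrt s\,ds}{(x+s)^{k+l}}$. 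These give the powers $x^{-1/2}$, $x^{-3/2}$, $x^{-5/2}$ times Beta constants. The delta method applied to $-1/\mathcal G'_\star=2\sqrt x\,(1+\cdots)^{-1}$ then produces the stated matrix $\frac1\beta\begin{pmatrix}1&-1\\-1&2\end{pmatrix}$; I would verify the constants at this step.

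Finally, the almost sure expansions of $\mathcal G_\star'$ and $\mathcal G_\star''$ follow by the same deterministic computation as in part (1). The $\xi$-fluctuation of $\mathcal G_\star'$ is of order $x^{-5/4}=o(x^{-1})$ a.s. via Borel--Cantelli, and the point-fluctuation part is smaller still. For $\mathcal G_\star''$ the deterministic part is $\frac{1}{4}x^{-3/2}$ and the fluctuations are $O(x^{-9/4+\epsilon})=o(x^{-2})$.
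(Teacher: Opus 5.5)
Your overall architecture matches the paper's. In Part (1) you compare with the deterministic density to get $-\sqrt w$, and control weight fluctuations and point displacements separately. In Part (2) your Beta-integral covariances and delta method are exactly the paper's computation: they give $\frac1\beta\bigl(\begin{smallmatrix}1&-1/4\\-1/4&1/8\end{smallmatrix}\bigr)$ for $\bigl(x^{1/4}(\mathcal G_\star(x)+\sqrt x),\,x^{5/4}(\mathcal G_\star'(x)+\tfrac1{2\sqrt x})\bigr)$, and linearizing $-1/\mathcal G_\star'$ multiplies the second coordinate by $4$. The gap is in the rigidity input, and as a consequence in how you handle point fluctuations in Part (2).

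The bound $|a_j+(3\pi j/2)^{2/3}|\le j^{-1/3+\varepsilon}$ is spacing-scale (optimal) rigidity. Ram\'{\i}rez--Rider--Vir\'ag do not provide it; they give only $a_j=-(3\pi j/2)^{2/3}(1+o(1))$. The inputs actually available are $j^{1/6+\varepsilon}$ for general $\beta$ (Lemma~\ref{lem:airy-bound}) and $j^{\varepsilon}$ for $\beta\in\{1,2\}$. The latter transfers to $a^{\Theta}_{\beta,j}$ through the limiting interlacing $a_j\le a_j^{\Theta}\le a_{j-l}$ (Lemma~\ref{thm:robin_airy_locations}). Part (1) survives, since any exponent below $1/3$ makes the displacement term vanish, though for general $\beta$ only at rate $O(|w|^{-1/4+3\varepsilon})$. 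Note, however, that your claimed rigidity would prove Part (2) for every $\beta>0$. The restriction to $\beta\in\{1,2\}$ in the statement is precisely the absence of $j^{\varepsilon}$-rigidity for general $\beta$.

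This also shows that your ``main obstacle'' is misplaced. As written, Part (2) rests on an unproved variance bound for linear statistics of the limiting (higher-rank) Airy process, obtained by an unspecified passage from finite-$N$ local laws. No variance bound is needed. With $|a_{\beta,j}^{\Theta}+(3\pi j/2)^{2/3}|\le j^{\varepsilon}$ and $\beta^{-1}\xi_{\beta,j}^2=O(j^{\varepsilon})$ almost surely, splitting the sum at $j=\lfloor x^{3/2}\rfloor$ gives, pathwise,
\[
\sum_{j}\beta^{-1}\xi_{\beta,j}^2\,
\frac{|a_{\beta,j}^{\Theta}+(3\pi j/2)^{2/3}|}{|x-a_{\beta,j}^{\Theta}|\,\bigl(x+(3\pi j/2)^{2/3}\bigr)}
=O(x^{-1/2+3\varepsilon}).
\]
The $x$-derivative of the corresponding term is $O(x^{-3/2+3\varepsilon})$. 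For $\varepsilon<1/12$ these are $o(x^{-1/4})$ and $o(x^{-5/4})$ respectively. So all fluctuations come from $\sum_j(\beta^{-1}\xi_{\beta,j}^2-1)/(x+(3\pi j/2)^{2/3})$ and its derivative at deterministic points, and Lyapunov plus Cram\'er--Wold finishes the argument.

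Similarly, the almost sure expansions of $\mathcal G_\star'$ and $\mathcal G_\star''$ need no separate Borel--Cantelli argument. They follow from Part (1) by Cauchy's formula on $|w-x|=x/2$. By contrast, controlling point fluctuations only in variance would not give them almost surely.
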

We note that Proposition \ref{prop:beta_airy_green} is an extension of \cite[Theorem~8.1]{Bykhovskaya-Gorin-Sodin25} from $\beta=1$ to $\beta > 0$, and the $\mathcal{G}_{\star} = \mathcal{G}^{(1)}$ case of Part \ref{enu:lem:beta_airy_green_infty:1} of Lemma \ref{lem:beta_airy_green_infty} is obtained in \cite[Proposition~8.3]{Bykhovskaya-Gorin-Sodin25}.

\subsection{Main results} \label{s.main results}

On the fluctuation level,  the critical BBP transition parameters for the spiked GOE/GUE are
\begin{equation} \label{eq:defn_d_i}
  d_i=1-\theta_iN^{-1/3}, \qquad\theta_i\in \mathbb R,\qquad i=1,\dots,r,
\end{equation}
and the corresponding spiked model is denoted by 
\begin{align} \label{eq:defn_H^D_N}
H_N^{(D)}= {}& H_N+\sum_{i=1}^r d_i \w_i \w_i^\top  & \text{or} && H_N^{(D)}= {}& H_N+\sum_{i=1}^r d_i \w_i \w_i^\ast,
\end{align}
as described in \eqref{eq:spiked_model}. In the rank-one tridiagonal G\(\beta\)E setting, we consider
\begin{equation} \label{eq:defn_H^beta+dee}
H_N^{(\beta)}+d e_1e_1^\top,
\qquad
d=1-\theta N^{-1/3}, \qquad \theta\in \mathbb{R}. 
\end{equation}

We now state our main results regarding the eigenvector overlap distribution in the critical BBP transition regime. The first result concerns general rank $r$ deformation for GOE and GUE respectively. The second result gives the extension to the G\(\beta\)E in the rank-one case. In all cases, the  overlap scale is \(N^{-1/3}\). After a normalization, the limiting law is expressed in the negative reciprocal of the derivative of the corresponding Airy-Green function, evaluated at the appropriate soft-edge root.

\begin{thm}[GOE and GUE with finite-rank critical deformation]\label{finite_spikes_GOE}
Let \(H_N\) be the GOE (resp.\ GUE) matrix of dimension \(N\) specified by \eqref{eq:defn_GOE} (resp.~\eqref{eq:defn_GUE}). Fix \(r\ge1\), and let $d_1, \dotsc, d_r$ be specified in \eqref{eq:defn_d_i}, $\w_1, \dotsc, \w_r$ be deterministic orthonormal vectors, and the spiked model $H^{(D)}_N$ be the $N \times N$ random matrix defined in \eqref{eq:defn_H^D_N}.

Let
\[
\lambda_1^{(D)}\ge \lambda_2^{(D)}\ge \cdots \ge \lambda_N^{(D)}
\]
be the eigenvalues of \(H_N^{(D)}\), and let \(\x_k^{(D)}\) be a unit eigenvector associated with \(\lambda_k^{(D)}\). Fix \(i\in\{1,\dots,r\}\) and let \(k\ge1\) be independent of \(N\). Let $\beta = 1$ for the GOE case and $\beta = 2$ for the GUE case. Put
\begin{equation}\label{eq:Theta_i}
  \Theta^{(i)}
  =
  (\theta_1,\dots,\theta_{i-1},\theta_{i+1},\dots,\theta_r),
  \qquad
  \mathcal G^{(\beta)}_{(r-1),i^c}:=\mathcal G^{(\beta)}_{\Theta^{(i)}},
\end{equation}
where \(\mathcal G^{(\beta)}_{\Theta^{(i)}}\) is the Airy-Green function from Proposition~\ref{pro:multispike_airy_green}. Let \(s_{\beta, k}^{(i)}\) be the unique solution in
\[
\bigl(
a_{\beta,k}^{(\Theta^{(i)})},
a_{\beta,k-1}^{(\Theta^{(i)})}
\bigr), \qquad a_{\beta,0}^{(\Theta^{(i)})}:= +\infty,
\]
of
\begin{equation} \label{eq:roots_beta_12}
  \mathcal G^{(\beta)}_{(r-1),i^c}(x)=\theta_i.
\end{equation}
Then, as \(N\rightarrow\infty\), we have the joint convergence in distribution
\begin{align}\label{GOE_Airy Green fun}
 N^{2/3}\bigl(\lambda_k^{(D)}-2\bigr) \Longrightarrow {}& s_{\beta,k}^{(i)},  & N^{1/3}\bigl|\langle \w_i,\x_k^{(D)}\rangle\bigr|^2
\Longrightarrow {}&
-\frac{1}{(\mathcal G^{(\beta)}_{(r-1),i^c})'\!\bigl(s_{\beta, k}^{(i)}\bigr)}.
\end{align}
Moreover, for any finite $K$, \eqref{GOE_Airy Green fun} holds jointly for $k = 1, 2, \dotsc, K$.
\end{thm}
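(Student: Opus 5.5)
By orthogonal/unitary invariance (see the Remark after \eqref{eq:overlap}), we may take $\w_j=e_j$. By permuting coordinates we may also take $i=r$. We then write
\[
H_N^{(D)}=H_N^{(r-1)}+d_r e_re_r^\ast,
\qquad
H_N^{(r-1)}:=H_N+\sum_{j\ne r}d_je_je_j^\ast .
\]

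\emph{Resolvent identity.} Let $\nu_1\ge\nu_2\ge\cdots$ be the eigenvalues of $H_N^{(r-1)}$, with unit eigenvectors $\u_j$. Set
\[
g_N(z)=\langle e_r,(z-H_N^{(r-1)})^{-1}e_r\rangle=\sum_j\frac{|\langle \u_j,e_r\rangle|^2}{z-\nu_j}.
\]
The rank-one perturbation formula gives two facts:
\begin{enumerate}
\item the eigenvalues $\lambda_k^{(D)}$ are exactly the solutions of $g_N(\lambda)=1/d_r$, one in each interval $(\nu_k,\nu_{k-1})$;
\item the eigenvector–eigenvalue identity (obtained by differentiating the resolvent at the pole) reads
\[
|\langle e_r,\x_k^{(D)}\rangle|^2=\frac{1}{d_r^{2}\,\bigl(-g_N'(\lambda_k^{(D)})\bigr)} .
\]
\end{enumerate}
Introduce the rescaled variable $z=2+wN^{-2/3}$ and the rescaled function
\[
\widetilde g_N(w):=N^{1/3}\bigl(g_N(2+wN^{-2/3})-1\bigr).
\]
Since $1/d_r=1+\theta_rN^{-1/3}+O(N^{-2/3})$, the root equation becomes $\widetilde g_N(w)=\theta_r+o(1)$. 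The derivative picks up a factor $N^{1/3}\cdot N^{-2/3}$, so
\[
-g_N'(z)=N^{1/3}\bigl(-\widetilde g_N'(w)\bigr),
\qquad\text{hence}\qquad
N^{1/3}|\langle e_r,\x_k^{(D)}\rangle|^2=-\frac{1+o(1)}{\widetilde g_N'(w_k)} .
\]
The theorem therefore reduces to the following claim.

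\emph{Claim.} $\widetilde g_N\Rightarrow\mathcal G^{(\beta)}_{\Theta^{(r)}}$ as random meromorphic functions, locally uniformly on compacts avoiding the poles, together with derivatives. The poles are the points $N^{2/3}(\nu_j-2)$, which converge jointly to $a^{\Theta^{(r)}}_{\beta,j}$.

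\emph{Proof of the claim.} The key structural fact is that, conditionally on $H_N^{(r-1)}$ restricted to the coordinates $\ne r$, the overlaps $\langle\u_j,e_r\rangle$ are approximately i.i.d.\ Gaussian of variance $1/N$. To make this exact, block-decompose on coordinate $r$ and apply the Schur complement:
\[
g_N(z)^{-1}=z-h_{rr}-\mathbf h^\ast(z-\widehat H)^{-1}\mathbf h ,
\]
where $\widehat H$ is the $(N-1)\times(N-1)$ minor, itself a GOE/GUE with $r-1$ critical spikes, and $\mathbf h$ is an independent Gaussian vector. In the eigenbasis of $\widehat H$ the coordinates of $\mathbf h$ become i.i.d.\ with $N|\cdot|^2\sim\beta^{-1}\chi^2_\beta$. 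By Proposition~\ref{pro:convergence_rank_r}, the rescaled eigenvalues of $\widehat H$ converge to the higher-rank Airy$_\beta$ process $a^{\Theta^{(r)}}_{\beta,j}$. Now split the quadratic form into three ranges:
\begin{enumerate}
\item the top eigenvalues, those with $N^{2/3}(\widehat\nu_j-2)>x$: this finite sum converges to $\sum_{a_j>x}\beta^{-1}\xi_j^2/(w-a_j)$;
\item the bulk part: this is deterministic to leading order, equal to a semicircle integral that yields $1$ plus the $\frac{2}{\pi}\sqrt{-x}$ counterterm, plus a term linear in $w$;
\item the intermediate window: this contributes $o(1)$ after letting $x\to-\infty$. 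Here we use edge rigidity for the spiked minor, combined with a second-moment bound on the fluctuating weights $\xi_j^2-1$.
\end{enumerate}
The same mechanism, applied to $g_N^{-1}$ rather than $g_N$ and combined with $z-h_{rr}$, identifies the limit. The Schur form converges to a function $F(w)$. Then $\widetilde g_N=N^{1/3}(1/F_N-1)$, and the combination reorganizes into $\mathcal G^{(\beta)}_{\Theta^{(r)}}(w)$; one checks it against the defining series in Proposition~\ref{pro:multispike_airy_green}. This is consistent with the fact that $g_{\rm SC}(2)=1$ gives $\widetilde g\approx -\sqrt w$, matching Lemma~\ref{lem:beta_airy_green_infty}. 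Uniform convergence of the meromorphic functions upgrades to convergence of derivatives by Cauchy estimates.

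\emph{Conclusion via Hurwitz.} The limit $\mathcal G:=\mathcal G^{(\beta)}_{\Theta^{(r)}}$ is strictly decreasing between consecutive poles, since $\mathcal G'=-\sum\beta^{-1}\xi_j^2/(w-a_j)^2<0$ with a.s.\ nonzero weights. Together with Lemma~\ref{lem:beta_airy_green_infty}, this yields the unique root $s^{(r)}_{\beta,k}$ in each gap. By Skorokhod representation and Hurwitz's theorem, the roots $w_k=N^{2/3}(\lambda^{(D)}_k-2)$ converge jointly to the $s^{(r)}_{\beta,k}$, and the values $\widetilde g_N'(w_k)$ converge to $\mathcal G'(s^{(r)}_{\beta,k})\neq0$. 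This proves \eqref{GOE_Airy Green fun} jointly in $k\le K$.

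We also need the limiting poles $a^{\Theta^{(r)}}_{\beta,j}$ to be simple; this follows by induction on the rank, since the roots of a strictly monotone function interlace strictly. The same induction gives Proposition~\ref{pro:existence_rank_r}.

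\emph{Main obstacle.} The hardest step is the uniform control of the intermediate window in the claim, i.e.\ showing
\[
\sum_{x_0<N^{2/3}(\widehat\nu_j-2)<x}\frac{N|\mathbf h_j|^2-1}{w-\cdot}\to0
\]
together with the deterministic compensation $\frac2\pi\sqrt{-x}$. For this I would first try optimal edge rigidity of the spiked minor plus the independent-weight variance computation, which gives a variance of order $\sum_{j} j^{-4/3}$ over the window, hence small tails.
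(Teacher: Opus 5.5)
Your structural reduction is correct, and it takes a different and in one respect cleaner route than the paper's. The spikes $d_je_je_j^\ast$, $j\ne r$, only modify diagonal entries. Hence the off-diagonal part $\mathbf h$ of the $r$-th column of $H_N^{(r-1)}$ is exactly Gaussian and independent of the minor $\widehat H$. So the weights $\omega_j=N|\langle\widehat{\u}_j,\mathbf h\rangle|^2$ are exactly i.i.d.\ $\beta^{-1}\chi^2_\beta$ and independent of the poles $\hat a_j=N^{2/3}(\widehat\nu_j-2)$. Up to a harmless $1+O(N^{-1})$ rescaling, the minor is a spiked GOE/GUE with the same $\Theta^{(r)}$.

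This replaces the paper's Lemma~\ref{lem:joint-edge-gaussian-general-short}, which relies on residual $I_{r-1}\oplus O$ invariance plus the inductive bound $|\langle e_i,\widehat{\u}_{j,N}\rangle|^2=O_{\mathbb P}(N^{-1/3})$. It also replaces the paper's conditional spherical-moment computations, and the induction is then needed only for simplicity of the poles.

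Your inversion step is fine once stated correctly. It is not $F_N$ that has a nontrivial limit but $G_N:=N^{1/3}(\mathbf h^\ast(z-\widehat H)^{-1}\mathbf h-1)$. One has $F_N=1-N^{-1/3}G_N+O_{\mathbb P}(N^{-1/2})$ and $\widetilde g_N=(G_N+o(1))/F_N$. Since $G_N$ is bounded on compacts avoiding the limiting poles, $\widetilde g_N$ inherits the limit of $G_N$ there, even though its poles are the eigenvalues of $H_N^{(r-1)}$ rather than of the minor.

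The gap is the claim $G_N\to\mathcal G^{(\beta)}_{\Theta^{(r)}}$ itself. It carries all the analytic weight, and your three-range argument does not establish it.
\begin{itemize}
\item The centered part $\sum(\omega_j-1)/(w-\hat a_j)$ is indeed handled by a variance bound.
\item The unit-weight part over the window $x_0<\hat a_j<x$, with $x_0\approx-N^{c}$, must match $\frac1\pi\int_{x_0}^{x}\sqrt{-y}\,(w-y)^{-1}\,dy$ up to $o(1)$ as $x\to-\infty$, uniformly in $N$. By summation by parts, this needs, for example, the counting function of the spiked minor to be accurate to $O(|y|^{1-\delta})$ on $[x_0,x]$, with probability tending to one uniformly in $N$.
\item Standard rigidity carries an $N^{\varepsilon}$ loss. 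It only bounds this error by about $N^{\varepsilon}J^{-2/3}\approx N^{\varepsilon}|x|^{-1}$, which does not vanish for fixed $x$.
\item The ``optimal edge rigidity'' you invoke is not supplied, and you would need separate finite-$N$ edge input to get it.
\end{itemize}

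The paper avoids rigidity in this window altogether. It applies the convergence criterion \cite[Theorem~8.13]{Bykhovskaya-Gorin-Sodin25} for functions in the class $\Omega_-$. That criterion requires only two things: joint convergence of finitely many top poles and weights, and $\lim_{R\to\infty}\limsup_{N}\mathbb E|G_N(iR)-\phi(R)|^2=0$ with $\phi(R)=-\frac{1+i}{\sqrt2}\sqrt R$. The second condition follows from the $L^2$ local law of \cite{Bourgade-Mody-Pain22} at the single scale $\eta=RN^{-2/3}$. It transfers to the spiked minor via the interlacing bound $|\widehat m_N-m_N|\le C_r/(N\eta)$. With your exactly i.i.d.\ weights, this check is the same as in the rank-one G$\beta$E case. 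Feeding your Schur reduction into that criterion would close the argument.
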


In the rank-one case, we have the following extension to the G\(\beta\)E, for all \(\beta>0\).   

\begin{thm}[G\texorpdfstring{\(\beta\)}{beta}E with rank-one critical deformation]\label{one_spike_beta}
  Let \(H_N^{(\beta)}\) be the tridiagonal random matrix of dimension \(N\) defined in \eqref{eq:defn_H^beta}. Fix \(\beta>0\) and \(\theta\in\mathbb R\). Let
\begin{equation*}
 % \label{eq:defn_H^beta_N}
\lambda_1\ge \lambda_2\ge \cdots \ge \lambda_N   
\end{equation*}
be the eigenvalues of $H_N^{(\beta)}+d e_1e_1^\top$ defined in \eqref{eq:defn_H^beta+dee},
and let \(\x_k\) be a unit eigenvector associated with \(\lambda_k\). Let \(k\ge1\) be independent of \(N\). Let \(s_{\beta,k}\) be the unique solution in
\[
\bigl(a_{\beta,k},a_{\beta,k-1}\bigr),
\qquad
a_{\beta,0}:=+\infty,
\]
of
\begin{equation} \label{eq:roots_beta_positive}
  \mathcal G^{(\beta)}(x)=\theta,
\end{equation}
where \(\mathcal G^{(\beta)}\) is the Airy-Green function from Proposition~\ref{prop:beta_airy_green}. Then, as \(N\rightarrow\infty\), we have the joint convergence in distribution
\begin{align}\label{eq:limit_rank_one_beta}
 N^{2/3}\bigl(\lambda_k-2\bigr) \Longrightarrow {}& s_{\beta,k}, & N^{1/3}\bigl|\langle e_1,\x_k\rangle\bigr|^2
\Longrightarrow {}&
-\frac{1}{(\mathcal G^{(\beta)})'\!\bigl(s_{\beta,k}\bigr)}.
\end{align}
Moreover, for any finite $K$, \eqref{eq:limit_rank_one_beta} holds jointly for $k = 1, 2, \dotsc, K$.
\end{thm}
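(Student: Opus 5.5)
We prove Theorem~\ref{one_spike_beta} through the eigenvector--eigenvalue identity for a rank-one perturbation, combined with a convergence statement for a rescaled resolvent. Let $m(z):=\langle e_1,(H_N^{(\beta)}-z)^{-1}e_1\rangle$. Every eigenvalue $\lambda$ of $H_N^{(\beta)}+de_1e_1^\top$ satisfies $1+d\,m(\lambda)=0$. Differentiating the resolvent identity gives
\[
|\langle e_1,\x_k\rangle|^2=\frac{1}{d^2\,m'(\lambda_k)}.
\]
Spectral decomposition yields $m(z)=\sum_j |q_j|^2/(\mu_j^{(\beta)}-z)$, where $q_j$ is the first coordinate of the $j$-th eigenvector of $H_N^{(\beta)}$. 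For the tridiagonal model, $(|q_j|^2)_j$ is Dirichlet$(\beta/2,\dots,\beta/2)$ and is independent of the eigenvalues. So we may write $|q_j|^2=\xi_{\beta,j}^2/\sum_l\xi_{\beta,l}^2$ with $\xi_{\beta,j}^2\sim\chi^2_\beta$ i.i.d.

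Rescale by setting $w=N^{2/3}(z-2)$ and
\[
\mathcal G_N(w):=N^{1/3}\bigl(1+m(z)\bigr),\qquad z=2+wN^{-2/3}.
\]
Using $m\approx -G_{\SC}$ and $G_{\SC}(2+\epsilon)\approx 1-\sqrt\epsilon$, the leading constant cancels. The eigenvalue equation $1+dm=0$ becomes
\[
\mathcal G_N(w)=\frac{N^{1/3}(d-1)\,m}{d}\approx\theta .
\]
Similarly,
\[
N^{1/3}|\langle e_1,\x_k\rangle|^2=\frac{1}{d^2\,\mathcal G_N'(w_k)}\cdot\bigl(1+o(1)\bigr),
\]
where $w_k=N^{2/3}(\lambda_k-2)$. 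The signs must be matched to the convention $\mathcal G^{(\beta)}(w)=\sum \beta^{-1}\xi^2/(w-a)$: since $m$ is increasing between poles, $\mathcal G_N'<0$ and the overlap equals $-1/\mathcal G_N'$ in the limit. The theorem then reduces to the following claim.

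\textbf{Key step (main obstacle).} We must show that $\mathcal G_N\to\mathcal G^{(\beta)}$ in distribution, as random meromorphic functions, uniformly on compacts away from poles, jointly with the rescaled eigenvalues $N^{2/3}(\mu_j^{(\beta)}-2)\to a_{\beta,j}$ from Proposition~\ref{pro:convergence_rank_1}.

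Split the sum defining $m$ into two parts.

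\emph{Edge part.} The terms with $N^{2/3}(\mu_j-2)>x$ form a finite sum. It converges to $\sum_{a_{\beta,j}>x}\beta^{-1}\xi_j^2/(w-a_{\beta,j})$, because $\sum_l\xi_l^2\approx\beta N$ by the law of large numbers.

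\emph{Bulk part.} The remaining terms, together with the constant $N^{1/3}$, must converge to $-\tfrac{2}{\pi}\sqrt{-x}+\epsilon(x)$ with $\epsilon(x)\to0$ in probability as $x\to-\infty$, uniformly in $N$.

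For the bulk part, I would first replace the weights $\xi_j^2$ by their mean. The resulting fluctuation is a sum of independent centered terms with variance of order $\sum_{\mu_j<2+xN^{-2/3}}N^{-4/3}|z-\mu_j|^{-2}\cdot N^{2/3}$. Rigidity of the $\beta$-ensemble ($\mu_j$ near classical locations $\gamma_j$, with $2-\gamma_j\asymp (j/N)^{2/3}$) makes this of order $\sum_{j>|x|^{3/2}} j^{-4/3}\to0$.

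I would then handle the deterministic-weight sum $\frac1N\sum 1/(\mu_j-z)$ over bulk eigenvalues. Edge rigidity on $\beta$-ensembles (Bourgade--Erd\H{o}s--Yau) gives $|\mu_j-\gamma_j|\le N^{-2/3+\epsilon}j^{-1/3}$. With this, the sum can be compared to the semicircle integral, and that integral produces exactly $1-N^{-1/3}\cdot\tfrac2\pi\sqrt{-x}$ plus the $\sqrt{-w}$ corrections that vanish in the limit. The uniformity in $N$ of this comparison is the delicate part. An alternative would be to use Bykhovskaya--Gorin--Sodin's argument for $\beta=1$ adapted to general $\beta$, or to control the tail through the stochastic Airy operator's Green function at $(0,0)$.

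Once uniform convergence of meromorphic functions is established, I would conclude as follows.

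1. Convergence also holds for derivatives, by Cauchy estimates.
2. The limit function $\mathcal G^{(\beta)}$ is strictly decreasing between consecutive poles. By part (1) of Lemma~\ref{lem:beta_airy_green_infty}, it tends to $-\infty$ at $+\infty$. Hence $\mathcal G^{(\beta)}=\theta$ has a unique root $s_{\beta,k}$ in each interval $(a_{\beta,k},a_{\beta,k-1})$.
3. By interlacing, $w_k$ lies in the corresponding interval between the rescaled $\mu$'s. Hurwitz's theorem / the implicit function theorem, applied on the Skorokhod space, then gives $w_k\to s_{\beta,k}$ jointly for $k\le K$.
4. Using $d\to1$ and convergence of $\mathcal G_N'$, we get
\[
N^{1/3}|\langle e_1,\x_k\rangle|^2\to -\frac{1}{(\mathcal G^{(\beta)})'(s_{\beta,k})},
\]
with nondegeneracy following from $(\mathcal G^{(\beta)})'<0$.

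Theorem~\ref{finite_spikes_GOE} follows the same scheme. Condition on the other $r-1$ spikes: by unitary invariance, $\langle \w_i,(H_N+\sum_{j\ne i}d_j\w_j\w_j^\ast-z)^{-1}\w_i\rangle$ has Dirichlet-type weights independent of the spectrum of the rank-$(r-1)$ model. Proposition~\ref{pro:convergence_rank_r} supplies the edge limit $a^{\Theta^{(i)}}_{\beta,j}$, and the bulk comparison uses rigidity of the finite-rank perturbed model.
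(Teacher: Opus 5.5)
Your reduction matches the paper's. It uses the identity $|\langle e_1,\x_k\rangle|^2=1/\bigl(d^2\,e_1^\top(\lambda_k-H_N^{(\beta)})^{-2}e_1\bigr)$, a rescaled resolvent with Dirichlet weights independent of the spectrum, interlacing to place $w_k=N^{2/3}(\lambda_k-2)$ in the $k$-th gap, and a Rouch\'e/Hurwitz step for roots and derivatives. Your signs need fixing. With $m(z)=\langle e_1,(H_N^{(\beta)}-z)^{-1}e_1\rangle$, the function $N^{1/3}(1+m)$ is increasing between poles, equals $-\theta/d$ at $w_k$, and would converge to $-\mathcal G^{(\beta)}$. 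Use instead $\widetilde{\mathcal G}_N(w)=N^{1/3}\bigl(e_1^\top(z-H_N^{(\beta)})^{-1}e_1-1\bigr)$. For it, $\widetilde{\mathcal G}_N(w_k)=\theta/d$ and $N^{1/3}|\langle e_1,\x_k\rangle|^2=-1/\bigl(d^2\widetilde{\mathcal G}_N'(w_k)\bigr)$ hold exactly, with no $1+o(1)$.

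The genuine gap is the key step, and the rigidity route you propose for it does not close. Bourgade--Erd\H{o}s--Yau rigidity carries an $N^{\epsilon}$ loss. In edge units, with $x_{j,N}=N^{2/3}(\mu_j^{(\beta)}-2)$, it only gives $|x_{j,N}-N^{2/3}(\gamma_j-2)|\le N^{\epsilon}j^{-1/3}$. Comparing $\sum_{x_{j,N}\le x}(w-x_{j,N})^{-1}$ termwise with the semicircle integral then costs about $\sum_{j\gtrsim|x|^{3/2}}N^{\epsilon}j^{-1/3}\,j^{-4/3}\asymp N^{\epsilon}|x|^{-1}$. For $|x|^{3/2}\lesssim j\lesssim N^{\epsilon}$ the bound does not even decide on which side of the cutoff $x_{j,N}$ lies. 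The same problem affects your rigidity-based variance bound for the weight fluctuations. So the cutoff $x$ cannot be chosen independently of $N$. If $|x|$ grows with $N$, the edge part has unboundedly many terms, which Proposition~\ref{pro:convergence_rank_1} does not control.

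The missing idea is to obtain a loss-free estimate off the real axis and then transfer it to the real axis. At $z=2+iRN^{-2/3}$, the $L^2$ local law of \cite{Bourgade-Mody-Pain22}, $\mathbb E|m_N(2+i\eta)-m(2+i\eta)|^2\le C/(N\eta)^2$ for the empirical Stieltjes transform $m_N$, gives $\limsup_N\mathbb E|N^{1/3}(m_N(z)-1)-\phi(R)|^2\le C/R^2$, where $\phi(R)=-\frac{1+i}{\sqrt2}\sqrt R$. The weight fluctuations are controlled without rigidity through $\sum_j(R^2+x_{j,N}^2)^{-1}=-\frac{N^{1/3}}{R}\Im m_N(z)$, whose expectation has $\limsup_N$ of order $R^{-1/2}$. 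Then \cite[Theorem~8.13]{Bykhovskaya-Gorin-Sodin25}, a convergence criterion in the Nevanlinna-type class $\Omega_-$, is applied to $\mathcal G_N^{(\beta)}(w)=\sum_i\beta^{-1}\xi_{\beta,N,i}^2/(w-x_{i,N})-N^{1/3}$. Its inputs are:
\begin{itemize}
\item joint convergence of finitely many top poles and weights;
\item $\lim_R\limsup_N\mathbb E|\mathcal G_N^{(\beta)}(iR)-\phi(R)|^2=0$;
\item $\mathcal G^{(\beta)}(iR)-\phi(R)\to0$ almost surely, from Lemma~\ref{lem:beta_airy_green_infty}.
\end{itemize}
It yields a coupling under which $\mathcal G_N^{(\beta)}\to\mathcal G^{(\beta)}$ meromorphically almost surely. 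Your normalized function differs from this by the factor $c_{\beta,N}=N/\sum_l\beta^{-1}\xi_{\beta,N,l}^2$, and that factor multiplies the constant $N^{1/3}$. So you need $N^{1/3}(c_{\beta,N}-1)\to0$, which is a CLT-scale fact, not the law of large numbers. The adaptation of Bykhovskaya--Gorin--Sodin that you list only as an alternative is in fact the substance of the key step.
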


The next result shows that as the parameters $\theta_i$'s go to $+\infty$ (resp.~$-\infty$), the eigenvalue and eigenvector overlap properties transit from the critical regime to the subcritical (resp.~supercritical) regime. For simplicity, we consider only the case that one $\theta_i$ goes to $\pm\infty$ if the rank of deformation is $r > 1$.

\begin{pro}[Limits as one \(\theta_i\to\pm\infty\)]
\label{prop:one-coordinate-BBP}
Suppose that \(\beta>0\). Let \(r\) be a fixed positive integer when
\(\beta\in\{1,2\}\), and let \(r=1\) otherwise. Fix
\(i\in\{1,\dots,r\}\). When \(\beta\in\{1,2\}\), let
\(\Theta^{(i)}\) be defined as in \eqref{eq:Theta_i}; otherwise, set
\(\Theta^{(i)}=()\).

The symbols
\(\theta_i\), \(a_{\beta,k}^{\Theta^{(i)}}\),
\(s_{\beta,k}^{(i)}\),
\(\mathcal G^{(\beta)}_{(r-1),i^c}\), and
\(\xi_{\beta,k}^2\) have the meanings specified in
Theorem~\ref{finite_spikes_GOE} for the GOE case with \(\beta=1\)
and the GUE case with \(\beta=2\). For all other \(\beta>0\), they
stand for \(\theta\), \(a_{\beta,k}\), \(s_{\beta,k}\),
\(\mathcal G^{(\beta)}\), and \(\xi_{\beta,k}^2\), respectively, as
in Theorem~\ref{one_spike_beta}. The \(o(1)\) terms in
Parts~\ref{enu:one-coordinate-BBP:positive}--%
\ref{enu:one-coordinate-BBP:negative-k1} are understood almost surely.

\begin{enumerate}[label=\textup{(\roman*)}]

\item\label{enu:one-coordinate-BBP:positive}
For every fixed \(k\ge1\), as \(\theta_i\to+\infty\),
\begin{equation}\label{eq:positive-direct}
\begin{aligned}
s_{\beta,k}^{(i)}
&=
a_{\beta,k}^{\Theta^{(i)}}
+\frac{\beta^{-1}\xi_{\beta,k}^2}{\theta_i}
\bigl(1+o(1)\bigr),
&\qquad
-\frac{1}{
\bigl(\mathcal G^{(\beta)}_{(r-1),i^c}\bigr)'
\bigl(s_{\beta,k}^{(i)}\bigr)}
&=
\frac{\beta^{-1}\xi_{\beta,k}^2}{\theta_i^2}
\bigl(1+o(1)\bigr).
\end{aligned}
\end{equation}

\item % \label{enu:one-coordinate-BBP:negative-kge2}
For every fixed \(k\ge2\), as \(\theta_i\to-\infty\),
\begin{equation}\label{eq:negative-kge2-direct}
\begin{aligned}
s_{\beta,k}^{(i)}
&=
a_{\beta,k-1}^{\Theta^{(i)}}
-\frac{\beta^{-1}\xi_{\beta,k-1}^2}{|\theta_i|}
\bigl(1+o(1)\bigr),
&\qquad
-\frac{1}{
\bigl(\mathcal G^{(\beta)}_{(r-1),i^c}\bigr)'
\bigl(s_{\beta,k}^{(i)}\bigr)}
&=
\frac{\beta^{-1}\xi_{\beta,k-1}^2}{\theta_i^2}
\bigl(1+o(1)\bigr).
\end{aligned}
\end{equation}

\item\label{enu:one-coordinate-BBP:negative-k1}
As \(\theta_i\to-\infty\),
\begin{equation}\label{eq:negative-k1-direct}
\begin{aligned}
s_{\beta,1}^{(i)}
&=
\theta_i^2\bigl(1+o(1)\bigr),
&\qquad
-\frac{1}{
\bigl(\mathcal G^{(\beta)}_{(r-1),i^c}\bigr)'
\bigl(s_{\beta,1}^{(i)}\bigr)}
&=
2|\theta_i|+o(1).
\end{aligned}
\end{equation}

\item % \label{enu:one-coordinate-BBP:normal}
If, in addition, \(\beta\in\{1,2\}\), then, as
\(\theta_i\to-\infty\),
\begin{equation}\label{eq:normal_supercritical}
\begin{aligned}
\frac{s_{\beta,1}^{(i)}-\theta_i^2}
{2(-\theta_i)^{1/2}}
&\Longrightarrow
\mathcal N\!\left(0,\frac1\beta\right),
&\qquad
(-\theta_i)^{1/2}
\left[
-\frac{1}{
\bigl(\mathcal G^{(\beta)}_{(r-1),i^c}\bigr)'
\bigl(s_{\beta,1}^{(i)}\bigr)}
+2\theta_i
\right]
&\Longrightarrow
\mathcal N\!\left(0,\frac2\beta\right).
\end{aligned}
\end{equation}

\end{enumerate}
\end{pro}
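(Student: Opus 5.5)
Throughout, write $\mathcal G:=\mathcal G^{(\beta)}_{(r-1),i^c}$, $a_j:=a^{\Theta^{(i)}}_{\beta,j}$ and $c_j:=\beta^{-1}\xi^2_{\beta,j}$. The whole proposition is a deterministic (pathwise) analysis of the equation $\mathcal G(s)=\theta_i$ with $\mathcal G$ frozen; the only exception is Part~(iv), which uses the distributional statement of Lemma~\ref{lem:beta_airy_green_infty}\ref{enu:lem:beta_airy_green_infty:2}.

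By Propositions~\ref{prop:beta_airy_green} and~\ref{pro:multispike_airy_green}, $\mathcal G$ is meromorphic with simple poles at the $a_j$ and residues $c_j>0$ (almost surely $\xi_{\beta,j}\neq0$). Since the limit is locally uniform, it may be differentiated term by term, giving
\[
\mathcal G'(w)=-\sum_j \frac{c_j}{(w-a_j)^2}<0 .
\]
So $\mathcal G$ is strictly decreasing on each interval $(a_k,a_{k-1})$. On $(a_k,a_{k-1})$ with $k\ge2$ it decreases from $+\infty$ to $-\infty$. On $(a_1,\infty)$ it decreases from $+\infty$ to $-\infty$ as well, using Lemma~\ref{lem:beta_airy_green_infty}\ref{enu:lem:beta_airy_green_infty:1}. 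Hence the root $s_k:=s^{(i)}_{\beta,k}$ is well defined.

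\textbf{Part (i).} As $\theta_i\to+\infty$, monotonicity forces $s_k\downarrow a_k$. Near $a_k$ we split
\[
\mathcal G(w)=\frac{c_k}{w-a_k}+R_k(w),
\]
where $R_k$ is analytic near $a_k$. Then $\theta_i=c_k/(s_k-a_k)+O(1)$, so $s_k-a_k=(c_k/\theta_i)(1+o(1))$. For the derivative, $\mathcal G'(s_k)=-c_k/(s_k-a_k)^2+O(1)=-(\theta_i^2/c_k)(1+o(1))$, which inverts to the claimed formula for $-1/\mathcal G'(s_k)$.

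\textbf{Part (ii).} For $k\ge2$ and $\theta_i\to-\infty$, monotonicity gives $s_k\uparrow a_{k-1}$. The same expansion, now around the pole $a_{k-1}$, gives $s_k=a_{k-1}-(c_{k-1}/|\theta_i|)(1+o(1))$ and $-1/\mathcal G'(s_k)=(c_{k-1}/\theta_i^2)(1+o(1))$.

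\textbf{Part (iii).} For $k=1$ and $\theta_i\to-\infty$, the root satisfies $s_1\to+\infty$. We use Lemma~\ref{lem:beta_airy_green_infty}\ref{enu:lem:beta_airy_green_infty:1} along the real axis: $\mathcal G(x)=-\sqrt x+o(1)$. Then $\theta_i=-\sqrt{s_1}+o(1)$, so $\sqrt{s_1}=|\theta_i|+o(1)$ and $s_1=\theta_i^2(1+o(1))$.

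The derivative needs more than Part~\ref{enu:lem:beta_airy_green_infty:1} gives directly. Since $\mathcal G(w)+\sqrt w\to0$ uniformly on $\{\Re w\ge0\}$, Cauchy's estimate on a disc of radius $x/2$ around $x$ yields
\[
\mathcal G'(x)=-\frac{1}{2\sqrt x}+o(x^{-1}).
\]
Hence $-1/\mathcal G'(s_1)=2\sqrt{s_1}\bigl(1+o(x^{-1/2})\bigr)=2\sqrt{s_1}+o(1)$. Combining with $\sqrt{s_1}=|\theta_i|+o(1)$ gives $2|\theta_i|+o(1)$. This Cauchy-estimate step is the only point in (i)–(iii) requiring care; for $\beta\in\{1,2\}$ the same bound is also stated directly in Lemma~\ref{lem:beta_airy_green_infty}.

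\textbf{Part (iv)} is the main obstacle, because $s_1$ is a random point, whereas Lemma~\ref{lem:beta_airy_green_infty}\ref{enu:lem:beta_airy_green_infty:2} gives joint Gaussian fluctuations only at a deterministic $x\to\infty$. Let $t=|\theta_i|$, $x_0=t^2$, and define $U(x)=x^{1/4}(\mathcal G(x)+\sqrt x)$ and $V(x)=x^{1/4}\bigl(-1/\mathcal G'(x)-2\sqrt x\bigr)$.

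Linearize $\mathcal G(s_1)=-t$ around $x_0$:
\[
\mathcal G(x_0)+\mathcal G'(x_0)(s_1-x_0)+\tfrac12\mathcal G''(\eta)(s_1-x_0)^2=-t .
\]
Here $\mathcal G(x_0)+t=x_0^{-1/4}U(x_0)=O_P(t^{-1/2})$ and $\mathcal G'(x_0)\approx-1/(2t)$. So $s_1-x_0=2t\cdot t^{-1/2}U(x_0)+\text{error}=2t^{1/2}U(x_0)+\text{error}$. The second-order term is $O(t^{-3})\,O_P(t)=o_P(1)$ by the a.s.\ bound on $\mathcal G''$ from Lemma~\ref{lem:beta_airy_green_infty}. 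Therefore
\[
\frac{s_1-x_0}{2t^{1/2}}=U(x_0)+o_P(1)\Rightarrow\mathcal N(0,1/\beta).
\]

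For the second coordinate, write
\[
t^{1/2}\Bigl(-\tfrac{1}{\mathcal G'(s_1)}-2t\Bigr)=t^{1/2}\Bigl(-\tfrac{1}{\mathcal G'(s_1)}+\tfrac{1}{\mathcal G'(x_0)}\Bigr)+V(x_0)+o(1).
\]
The first bracket is controlled by $\mathcal G''$: it has size $4t^2\cdot t^{-3}/4\cdot|s_1-x_0|=O_P(t^{-1/2})$, which after multiplying by $t^{1/2}$ gives $2U(x_0)+o_P(1)$ with the sign to be checked. The result is then $V(x_0)+2U(x_0)+o_P(1)$, with variance $\beta^{-1}(2-4+4)=2/\beta$, matching the claim; the covariance matrix in Lemma~\ref{lem:beta_airy_green_infty} is what produces the cancellation. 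The delicate point is justifying the Taylor remainders at random arguments. I would use the almost-sure $o(x^{-2})$ control of $\mathcal G''$ uniformly on $[x_0/2,2x_0]$, obtained again from Cauchy estimates, together with $s_1\in[x_0/2,2x_0]$ almost surely for large $t$, which follows from Part~(iii).
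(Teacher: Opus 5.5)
Your proposal is correct and follows essentially the same route as the paper. Parts (i)--(ii) use pole expansions plus monotonicity, and Part (iii) uses $\mathcal G(x)=-\sqrt x+o(1)$ together with a Cauchy estimate on $|w-x|=x/2$. For Part (iv), you linearize $\mathcal G(s_1)=-t$ at the deterministic point $t^2$ (the paper uses the mean value theorem with $\mathcal G'(\zeta_t)$ instead of your second-order Taylor expansion) and then decompose the overlap fluctuation as $2U+V$. The sign you left to be checked is indeed positive: $R(x)=-1/\mathcal G'(x)$ satisfies $R'(x)=\mathcal G''(x)/\mathcal G'(x)^2=x^{-1/2}(1+o(1))>0$, so $t^{1/2}\bigl(R(s_1)-R(t^2)\bigr)=2U+o_{\mathbb P}(1)$, exactly as in the paper.
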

\begin{remark}
Consider the finite-rank deformation of GOE/GUE:
\begin{enumerate}
\item
From the two asymptotic relations in
\eqref{eq:positive-direct}, we find that when one critical BBP
transition parameter $d_i=1-\theta_iN^{-1/3}$ moves to the subcritical
side as $\theta_i\to+\infty$, the top eigenvalues $\lambda_k^{(D)}$
behave as if the deformation were of rank $r-1$ specified by
$\Theta^{(i)}$. This is consistent with the eigenvalue-sticking
phenomenon for subcritical finite-rank deformations of Wigner matrices;
see \cite[Theorem~2.7 and Remark~2.13]{Knowles-Yin13}. On the other
hand, the overlap between $\mathbf x_k^{(D)}$ (the eigenvector
associated with $\lambda_k^{(D)}$) and $\mathbf w_i$ (the deformation
vector associated with $d_i$) vanishes, as if $\mathbf w_i$ did not
represent a special direction. We note that if $\mathbf v$ and
$\mathbf w$ are independent random vectors uniformly distributed on
the unit sphere in $\mathbb R^N$ (resp.\ $\mathbb C^N$), then
\[
N\lvert\langle\mathbf v,\mathbf w\rangle\rvert^2
\Longrightarrow
\xi_1^2
\qquad
\left(\text{resp. } \frac12\xi_2^2\right)
\]
as $N\to\infty$. This $\beta^{-1}\xi_\beta^2$ limiting distribution
is also observed in non-outlier eigenvector overlaps in the
subcritical regime of spiked sample covariance matrices; see
\cite[Theorem~2.20]{Bloemendal-Knowles-Yau-Yin16}.

\item
From the two asymptotic relations in
\eqref{eq:negative-k1-direct} and the two convergence statements in
\eqref{eq:normal_supercritical}, we find that as one critical BBP
transition parameter $d_i=1-\theta_iN^{-1/3}$ moves to the
supercritical side as $\theta_i\to-\infty$, the top eigenvalue
$\lambda_1^{(D)}$ moves away from the spectral edge $2$ and becomes an
outlier with Gaussian fluctuations, as featured in the supercritical
regime \cite{Baik-Ben_Arous-Peche05, Peche05, Baik-Silverstein06}. On the other hand, the overlap between $\mathbf x_1^{(D)}$
(the eigenvector associated with $\lambda_1^{(D)}$) and $\mathbf w_i$
(the deformation vector associated with $d_i$) increases with
$|\theta_i|$, and both exhibit Gaussian fluctuations \cite{Capitaine-Donati_Martin-Feral09, Capitaine-Donati_Martin18, Bao-Ding-Wang-Wang20, Bao-Ding-Wang18}. Moreover,
the two asymptotic relations in
\eqref{eq:negative-kge2-direct} show that the other top
eigenvalues behave as if they were the top eigenvalues of a rank-$(r-1)$
deformation specified by $\Theta^{(i)}$, and that the overlaps of their
eigenvectors with $\mathbf w_i$ vanish, as if $\mathbf w_i$ were not a
special direction for them.
\end{enumerate}
\end{remark}

\begin{remark}
Proposition~\ref{prop:one-coordinate-BBP} also applies to the rank-one
deformation of G$\beta$E for general $\beta>0$. However, the Gaussian
fluctuation result for the leading eigenvalue and its associated
eigenvector overlap in the supercritical limit $\theta\to-\infty$ is
established only for $\beta=1,2$, due to the corresponding restriction
in Part~\ref{enu:lem:beta_airy_green_infty:2} of
Lemma~\ref{lem:beta_airy_green_infty}.
\end{remark}
\subsection{Proof strategy and relation to previous results}

The starting point of both the proofs of Theorems \ref{finite_spikes_GOE} and \ref{one_spike_beta} is the following relation between eigenvectors and eigenvalues:

\begin{lem} \label{lem:eigenvector_eigenvalue}
  Let $X$ be an $N$-dimensional real symmetric (resp.~complex Hermitian) matrix with distinct eigenvalues $\mu_1 > \dotsb > \mu_N$, $\w$ be a unit $N$-dimensional column vector in $\realR^N$ (resp.~$\compC^N$), $d$ be a real number, and $Y = X + d \w \w^{\top}$ (resp.~$Y = X + d \w \w^*$) has distinct eigenvalues $\lambda_1 > \dotsb > \lambda_N$. We assume $\{ \mu_j \}^N_{j = 1} \cup \{ \lambda_j \}^N_{j = 1}$ consists of $2N$ distinct real numbers, and for any $k = 1, \dotsc, N$, $\x_k$ is a unit eigenvector of $Y$ associated with $\lambda_k$. Then
  \begin{equation}\label{eq:rank_one_root_identity}
    \w^\top(\lambda_k I - X)^{-1} \w =\frac{1}{d},
  \end{equation}
  and
  \begin{equation} \label{eq:eigenvector_eigenvalue}
    \lvert \langle \x_k, \w \rangle \rvert^2 = \frac{\lvert \langle \w, (\lambda_k I - X)^{-1} \w \rangle \rvert^2}{\langle \w, (\lambda_k I - X)^{-2} \w \rangle} = \frac{1}{d^2} \frac{1}{\langle \w, (\lambda_k I - X)^{-2} \w \rangle}.
  \end{equation}
\end{lem}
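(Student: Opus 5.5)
The plan is to work directly with the eigenvalue equation for $Y$ and the resolvent of $X$. Since $\lambda_k$ is not an eigenvalue of $X$ (the $2N$ numbers are distinct), the matrix $\lambda_k I - X$ is invertible. I will write the eigenvalue equation $Y\x_k=\lambda_k\x_k$ as
\[
(\lambda_k I - X)\x_k = d\,\w\,\langle \w,\x_k\rangle,
\]
using the convention $\langle \w,\x\rangle=\w^\ast\x$. Inverting $\lambda_k I - X$ then gives
\[
\x_k = d\,\langle \w,\x_k\rangle\,(\lambda_k I-X)^{-1}\w.
\]

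The first step is to show $\langle \w,\x_k\rangle\neq0$ and $d\neq0$. If $\langle \w,\x_k\rangle$ were zero, the last display would force $\x_k=0$, which contradicts $\lVert \x_k\rVert=1$. Likewise, if $d=0$ then $Y=X$, so $\lambda_k$ would be an eigenvalue of $X$, against the distinctness assumption.

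For \eqref{eq:rank_one_root_identity}, I will take the inner product of the displayed formula for $\x_k$ with $\w$. This gives $\langle \w,\x_k\rangle = d\,\langle \w,\x_k\rangle\,\w^\ast(\lambda_k I-X)^{-1}\w$. Dividing by the nonzero scalar $d\langle \w,\x_k\rangle$ yields $\w^\ast(\lambda_k I-X)^{-1}\w=1/d$. This quantity is real because the resolvent is self-adjoint at a real point.

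For \eqref{eq:eigenvector_eigenvalue}, I will take squared norms of the same displayed formula. Self-adjointness of $(\lambda_k I-X)^{-1}$ gives
\[
1=\lVert \x_k\rVert^2 = d^2\lvert\langle \w,\x_k\rangle\rvert^2\,\langle \w,(\lambda_k I-X)^{-2}\w\rangle.
\]
The inner product $\langle \w,(\lambda_k I-X)^{-2}\w\rangle = \lVert(\lambda_k I-X)^{-1}\w\rVert^2$ is positive, since $\w\neq0$ and the resolvent is invertible. Solving gives $\lvert\langle \x_k,\w\rangle\rvert^2 = d^{-2}\langle \w,(\lambda_k I-X)^{-2}\w\rangle^{-1}$. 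Finally, substituting $1/d^2=\lvert\langle \w,(\lambda_k I-X)^{-1}\w\rangle\rvert^2$ from the first identity produces the middle expression in \eqref{eq:eigenvector_eigenvalue}.

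This argument has no real obstacle. The only point that needs care is using the distinctness assumption to guarantee that the resolvent exists at $\lambda_k$ and that $\langle \w,\x_k\rangle\ne0$. The simplicity of the spectrum of $Y$ itself is not needed for the algebra; it only ensures that the overlap does not depend on the choice of $\x_k$.
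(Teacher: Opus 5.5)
Your proof is correct and follows essentially the paper's argument: both use the eigenvalue equation to write $\x_k$ as a multiple of $(\lambda_k I - X)^{-1}\w$, then read off the overlap from the normalization $\lVert \x_k\rVert=1$. The one difference is how you get the secular equation \eqref{eq:rank_one_root_identity}. You pair that expression with $\w$, which also makes the needed fact $\langle \w,\x_k\rangle\neq0$ explicit; the paper instead invokes the matrix determinant lemma.
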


In the setting of each of the theorems, identity \eqref{eq:eigenvector_eigenvalue} is rewritten as a differential identity that involves a finite-\(N\) version of the Airy-Green function specified by \eqref{eq:airy_green_beta_limit} or \eqref{eq:defn_Airy_Green_multiple}. Then by showing that the finite-\(N\) version of the Airy-Green function converges to the corresponding Airy-Green function as $N \to \infty$ in the sense of meromorphic convergence that is defined in Definition \ref{def:meromorphic-convergence}, we prove the theorems.

Instead of (\ref{eq:eigenvector_eigenvalue}), another eigenvector-eigenvalue identity \cite{Denton-Parke-Tao-Zhang19} is used as  the starting point of \cite[Theorem 2]{Bao-Wang20} for the eigenvector overlap in the critical regime of BBP transition for the GUE case.  
It is worth noting that since our paper is based on Lemma \ref{lem:eigenvector_eigenvalue} that is different from the eigenvector-eigenvalue identity from \cite{Denton-Parke-Tao-Zhang19}  on which \cite{Bao-Wang20} is based, the GUE case of Theorem \ref{finite_spikes_GOE} and \cite[Theorem 2]{Bao-Wang20} express the same limit in quite different forms. Although the two formulas describe the same limit law, their direct equivalence is not transparent.

\subsection{Possible Extensions}

The method is expected to extend to critically spiked real and complex Wishart ensembles which are also known as Laguerre orthogonal and unitary ensembles, and also the Laguerre $\beta$-ensemble with rank one spike. Extension to other spiked models, like the factor model and canonical correlation analysis that are considered in \cite{Bykhovskaya-Gorin-Sodin25} is also possible, and we leave the discussion to future work.

\subsubsection*{Acknowledgment}
Zhigang Bao was partially supported by Hong Kong RGC Grant GRF 16304724 and 17304225. 
Dong Wang and Yue Zhu were partially supported by the National Natural Science Foundation of China under grant number 12271502, and the University of Chinese Academy of Sciences start-up grant 118900M043.

\section{Proofs of the main results}

We begin by recording several standard spectral facts used repeatedly below.

\begin{fact} \label{rem:standard_spectral_facts}
  We use the following fact throughout.

  \begin{enumerate}
  \item\label{enu:rem:standard_spectral_facts_GOE_GUE}
    Let \(H_N\) be a GOE or GUE random matrix, and let \(\mathbb F=\mathbb R\) in the GOE case and \(\mathbb F=\mathbb C\) in the GUE case. Let \(P\) be a deterministic real symmetric or Hermitian matrix, and let \(\w\in\mathbb F^N\) be a deterministic unit vector and $W = \w\w^{\top}$ in the GOE case and $W = \w\w^{\ast}$ in the GUE case. Then, for every \(d\in\mathbb R\setminus\{0\}\),
    \[
      \sigma(H_N+P)\cap\sigma(H_N+P+d W)=\varnothing
      \qquad\text{almost surely}.
    \]
    Moreover, \(H_N+P\) has simple spectrum almost surely. These facts follow from the absolute continuity of the Gaussian entries and the standard discriminant/resultant polynomial argument; see \cite[Corollary~2.5.4 and Lemma~2.5.5]{Anderson-Guionnet-Zeitouni10} and \cite{Tao-Vu17}.
    
  \item\label{enu:rem:standard_spectral_facts_3}
    For the Dumitriu--Edelman tridiagonal G\(\beta\)E, the subdiagonal entries are almost surely positive. Hence \(H_N^{(\beta)}\) is an irreducible Jacobi matrix, so its spectrum is almost surely simple and the first coordinate of each eigenvector is nonzero. Consequently, for every \(d\in\mathbb R\setminus\{0\}\),
    \[
      \sigma(H_N^{(\beta)})\cap
      \sigma(H_N^{(\beta)}+d\,e_1e_1^\top)=\varnothing
      \qquad\text{almost surely}.
    \]
    Moreover, if \(\u_1,\dots,\u_N\) are the normalized eigenvectors of \(H_N^{(\beta)}\), then the vector of squared first coordinates is independent of the eigenvalues and satisfies
    \[
      \bigl(
      |\langle e_1,\u_1\rangle|^2,\dots,
      |\langle e_1,\u_N\rangle|^2
      \bigr)
      \overset{d}{=}
      \left(
        \frac{\xi^2_1}{\sum_{\ell=1}^N\xi^2_\ell},\dots,
        \frac{\xi^2_N}{\sum_{\ell=1}^N\xi^2_\ell}
      \right),
    \]
    where \(\xi_1,\dots,\xi_N\) are i.i.d.\ random variables with \(\xi^2_j\sim\chi_\beta^2\), independent of the eigenvalues; see \cite[Theorem~2.12]{Dumitriu-Edelman02} and \cite[Theorem~5.2.1]{Dumitriu03}.
  \end{enumerate}
\end{fact}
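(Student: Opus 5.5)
The statement collects two independent standard facts, and I would prove each directly: algebraic-genericity arguments for Part~\ref{enu:rem:standard_spectral_facts_GOE_GUE}, and the structure of irreducible Jacobi matrices plus the Dumitriu--Edelman change of variables for Part~\ref{enu:rem:standard_spectral_facts_3}.

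For Part~\ref{enu:rem:standard_spectral_facts_GOE_GUE} I view the independent real coordinates of \(H_N\) as a vector \(h\in\mathbb R^{n}\). These are the diagonal entries together with the real and imaginary parts of the upper-triangular entries. The law of \(h\) is a nondegenerate Gaussian, hence absolutely continuous on \(\mathbb R^n\). Simplicity of \(\sigma(H_N+P)\) is the non-vanishing of the discriminant of \(\det(zI-H_N-P)\). Disjointness of \(\sigma(H_N+P)\) and \(\sigma(H_N+P+dW)\) is the non-vanishing of the resultant of \(\det(zI-H_N-P)\) and \(\det(zI-H_N-P-dW)\). Both are real polynomials in \(h\), since the coefficients of the characteristic polynomials are polynomial in the entries. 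The zero set of a polynomial that is not identically zero has Lebesgue measure zero. So it suffices to exhibit one admissible realization of \(h\) at which each polynomial is nonzero.

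For that realization, complete \(\w\) to an orthonormal basis \(\w=\mathbf v_1,\mathbf v_2,\dots,\mathbf v_N\) of \(\mathbb F^N\), and set
\[
H_N=-P+\sum_{j=1}^N c_j\mathbf v_j\mathbf v_j^\ast ,
\]
which is an admissible self-adjoint value. Then \(H_N+P\) has spectrum \(\{c_1,\dots,c_N\}\). Since \(\w=\mathbf v_1\), the matrix \(H_N+P+dW\) has spectrum \(\{c_1+d,c_2,\dots,c_N\}\). I choose the \(c_j\) distinct with \(c_j\ne c_k+d\) for all \(j,k\). Then the discriminant is nonzero and, because \(d\neq0\), the two spectra are disjoint, so the resultant is nonzero. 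This nondegeneracy check is the only point needing care; everything else is routine.

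For Part~\ref{enu:rem:standard_spectral_facts_3}, each \(b_k\sim\chi_{\beta(N-k)}\) with \(\beta(N-k)>0\) is almost surely positive, so \(H_N^{(\beta)}\) is an irreducible Jacobi matrix. Suppose \(H_N^{(\beta)}\mathbf u=\mu\mathbf u\). The three-term recursion \(b_k u_{k+1}=(\sqrt{\beta N}\mu-a_k)u_k-b_{k-1}u_{k-1}\) determines \(\mathbf u\) from \(u_1\). Hence \(u_1\neq0\) for every eigenvector, and every eigenspace is one-dimensional, so the spectrum is simple.

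Disjointness of \(\sigma(H_N^{(\beta)})\) and \(\sigma(H_N^{(\beta)}+d\,e_1e_1^\top)\) then holds deterministically. Suppose \(\lambda\) lies in both spectra, with eigenvector \(\mathbf u\) of \(H_N^{(\beta)}\) and eigenvector \(\mathbf v\) of the perturbed matrix. Then
\[
0=\langle \mathbf u,(H_N^{(\beta)}+d\,e_1e_1^\top-\lambda)\mathbf v\rangle=d\,\overline{u_1}\,v_1 .
\]
Since \(d\neq0\) and \(u_1\neq0\), this forces \(v_1=0\). But then \(H_N^{(\beta)}\mathbf v=\lambda\mathbf v\) with \(v_1=0\), so \(\mathbf v=0\) by the recursion, which is a contradiction.

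For the law of the squared first coordinates I would invoke \cite[Theorem~2.12]{Dumitriu-Edelman02}. The map \((a,b)\mapsto(\mu,q)\), with \(q_j=|\langle e_1,\u_j\rangle|\), is a bijection onto ordered eigenvalues and positive \(q\) on the sphere. Its Jacobian yields a joint density that factorizes into \eqref{betaEpdf} times \(\prod_j q_j^{\beta-1}\). This gives independence of the eigenvalues and \(q\), and shows that \((q_j^2)_j\) is Dirichlet\((\beta/2,\dots,\beta/2)\). That Dirichlet law is exactly the law of \(\xi_j^2/\sum_\ell\xi_\ell^2\) with \(\xi_j^2\sim\chi^2_\beta\) i.i.d.
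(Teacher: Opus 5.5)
Your overall route is the one the paper points to: absolute continuity plus a discriminant/resultant polynomial for Part~1, and irreducibility of the Jacobi matrix plus the Dumitriu--Edelman Jacobian for Part~2. Your Part~2 argument is correct as written.

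The gap is in the one step you flagged yourself: the witness for the resultant in Part~1. With \(H_N=-P+\sum_j c_j\mathbf v_j\mathbf v_j^\ast\) and \(\w=\mathbf v_1\), the two spectra are \(\{c_1,\dots,c_N\}\) and \(\{c_1+d,c_2,\dots,c_N\}\). These share the \(N-1\) points \(c_2,\dots,c_N\) as soon as \(N\ge2\), so the resultant \emph{vanishes} at your chosen point. The condition \(c_j\neq c_k+d\) only separates the single moved eigenvalue \(c_1+d\) from the old spectrum. The claim ``because \(d\neq0\), the two spectra are disjoint'' is therefore false.

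Tuning the \(c_j\) cannot fix this. Any eigenvector of \(H_N+P\) orthogonal to \(\w\) stays an eigenvector of \(H_N+P+dW\) with the same eigenvalue. So a valid witness must have \(\w\) overlapping \emph{every} eigenvector of \(H_N+P\). Your choice makes \(\w\) itself an eigenvector, i.e.\ orthogonal to all the others.

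The repair is short. Take distinct \(c_j\) and an orthonormal basis with \(\mathbf v_j^\ast\w\neq0\) for all \(j\). For instance, use the image of the standard basis under an orthogonal/unitary map sending \(N^{-1/2}(1,\dots,1)^\top\) to \(\w\), so that \(|\mathbf v_j^\ast\w|=N^{-1/2}\). Then the pairing argument you already use in Part~2 applies:
\begin{itemize}
\item Suppose \(c_k\in\sigma(H_N+P+dW)\) with eigenvector \(\mathbf y\). Then \((H_N+P-c_k)\mathbf y=-d(\w^\ast\mathbf y)\w\).
\item Applying \(\mathbf v_k^\ast\) on the left gives \(d(\w^\ast\mathbf y)(\mathbf v_k^\ast\w)=0\), hence \(\w^\ast\mathbf y=0\).
\item Hence \(\mathbf y\in\ker(H_N+P-c_k)=\mathbb F\mathbf v_k\). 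Since \(\w^\ast\mathbf v_k\neq0\), this forces \(\mathbf y=0\), a contradiction.
\end{itemize}
Equivalently, the perturbed eigenvalues are the roots of \(1=d\sum_j|\mathbf v_j^\ast\w|^2/(z-c_j)\), whose right-hand side has a genuine pole at every \(c_j\). At this witness both the discriminant and the resultant are nonzero, so neither polynomial is identically zero. The rest of your Part~1 argument then goes through.
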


In this paper, we adopt the following notion of meromorphic convergence that will be used in the sequel:

\begin{defn}\label{def:meromorphic-convergence}
  Let \(\Omega\subset\mathbb C\) be an open set, and let \( \{ f_N \}^{\infty}_{N = 1} \) and \(f\) be meromorphic functions on \(\Omega\). We write
  \[
    f_N\xrightarrow{\mathrm{mer}} f
    \qquad\text{on }\Omega
  \]
  if, for every compact set \(K\subset\Omega\) containing no pole of \(f\), there exists \(N_0=N_0(K)\) such that \(K\) contains no pole of \(f_N\) for all \(N\ge N_0\), and
\[
\sup_{z\in K}|f_N(z)-f(z)|\to0
\qquad\text{as }N\to\infty .
\]
\end{defn}

This notion agrees with locally uniform convergence in the spherical
metric used in \cite[Section~8]{Bykhovskaya-Gorin-Sodin25}. The
meromorphic functions considered in this paper belong to the class
$\Omega_-$ of \cite[Definition~8.10]{Bykhovskaya-Gorin-Sodin25}; in
particular, they have simple real poles, and their negatives are
Herglotz--Nevanlinna functions.

By Cauchy's integral formula, \(f_N\xrightarrow{\mathrm{mer}}f\) implies \(f_N'\xrightarrow{\mathrm{mer}}f'\) on the same domain. We shall use the following elementary consequence.

\begin{lem} \label{lem:simple-zero-stability-meromorphic}
  Let \(U\subset\mathbb C\) be an open set, let \(x_0\in U\cap\mathbb R\), and assume that \(\{f_N\}_{N=1}^\infty\xrightarrow{\mathrm{mer}} f\) on \(U\). Suppose that \(x_0\) is not a pole of \(f\), and
  \begin{align*}
    f(x_0)= {}& 0, & f'(x_0) \neq {}& 0 .
  \end{align*}
  Then there exists \(\delta>0\) such that, for all sufficiently large \(N\), \(f_N\) has exactly one zero \(z_N\) in the disc \(\{z:|z-x_0|<\delta\}\), and \(z_N\to x_0\). Moreover,
  \[
    f_N'(z_N)\to f'(x_0).
  \]
  If, in addition, \(f_N(\overline z)=\overline{f_N(z)}\) on this disc, then \(z_N\in\mathbb R\).
\end{lem}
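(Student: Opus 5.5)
This is a Hurwitz-type argument. I will use the argument principle on a small disc, and then Cauchy's formula for the derivatives.

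\emph{Choosing the disc.} Since \(x_0\) is not a pole of \(f\), the function \(f\) is holomorphic on some disc \(D_{\delta_0}=\{|z-x_0|<\delta_0\}\) with \(\overline{D_{\delta_0}}\subset U\). Because \(f'(x_0)\neq0\), the zero \(x_0\) is isolated. So I can choose \(0<\delta<\delta_0\) such that \(x_0\) is the only zero of \(f\) in \(\overline{D_\delta}\). Then \(m:=\min_{|z-x_0|=\delta}|f(z)|>0\).

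\emph{Convergence on the closed disc.} The compact set \(K=\overline{D_\delta}\) contains no pole of \(f\). By Definition~\ref{def:meromorphic-convergence}, for \(N\ge N_0\) the set \(K\) contains no pole of \(f_N\), and \(\sup_K|f_N-f|\to0\). As \(K\) is compact and free of poles of \(f_N\), each such \(f_N\) is holomorphic on a neighbourhood of \(K\).

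\emph{Exactly one zero.} For \(N\) large, \(\sup_{|z-x_0|=\delta}|f_N-f|<m\). Rouché's theorem then shows that \(f_N\) and \(f\) have the same number of zeros in \(D_\delta\), counted with multiplicity. That number is one, so \(f_N\) has a unique zero \(z_N\), and it is simple.

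\emph{Convergence \(z_N\to x_0\).} Repeat the Rouché step with any smaller radius \(\delta'<\delta\). Since \(\min_{|z-x_0|=\delta'}|f|>0\), for large \(N\) the unique zero lies in \(D_{\delta'}\). Hence \(z_N\to x_0\).

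\emph{Convergence of derivatives.} By Cauchy's integral formula on the circle \(|z-x_0|=\delta\), the uniform convergence \(f_N\to f\) there implies \(f_N'\to f'\) uniformly on \(\overline{D_{\delta/2}}\). Continuity of \(f'\) at \(x_0\), together with \(z_N\to x_0\), then gives
\[
|f_N'(z_N)-f'(x_0)|\le \sup_{\overline{D_{\delta/2}}}|f_N'-f'|+|f'(z_N)-f'(x_0)|\to0 .
\]

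\emph{Reality of \(z_N\).} Suppose \(f_N(\overline z)=\overline{f_N(z)}\) on the disc. Then \(f_N(\overline{z_N})=\overline{f_N(z_N)}=0\). The disc is centred at the real point \(x_0\), so it is symmetric under conjugation and \(\overline{z_N}\) lies in \(D_\delta\). Uniqueness of the zero forces \(\overline{z_N}=z_N\), so \(z_N\in\mathbb R\).

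There is no real obstacle here. The only point needing care is to fix \(\delta\) before applying Definition~\ref{def:meromorphic-convergence}. This ensures that the poles of \(f_N\) stay out of the closed disc, so that Rouché's theorem and Cauchy's formula apply to holomorphic functions.
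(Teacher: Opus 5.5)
Your proposal is correct and follows essentially the same route as the paper: Rouch\'e's theorem on a small pole-free disc around \(x_0\), shrinking the radius to get \(z_N\to x_0\), Cauchy's integral formula for \(f_N'\to f'\), and conjugation symmetry plus uniqueness of the zero to get \(z_N\in\mathbb R\). You spell out details the paper leaves implicit, namely the holomorphy of \(f_N\) near the closed disc and the uniform derivative estimate on \(\overline{D_{\delta/2}}\) combined with continuity of \(f'\), but the argument is the same.
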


\begin{proof}
  Choose \(\delta>0\) such that the closed disc \(\overline D(x_0,\delta)\subset U\), contains no pole of \(f\), and contains no zero of \(f\) except \(x_0\). Since \(x_0\) is a simple zero, \(f\) has no zero on \(\partial D(x_0,\delta)\). By uniform convergence on \(\partial D(x_0,\delta)\), Rouch\'e's theorem implies that \(f_N\) and \(f\) have the same number of zeros in \(D(x_0,\delta)\), counted with multiplicity, for all large \(N\). Hence, \(f_N\) has exactly one zero \(z_N\) there. Shrinking \(\delta\) proves \(z_N\to x_0\). The derivative convergence follows from Cauchy's integral formula. Finally, if \(f_N(\overline z)=\overline{f_N(z)}\), then \(\overline{z_N}\) is also a zero in the same disc; uniqueness gives \(z_N=\overline{z_N}\).
\end{proof}

\subsection{Proof of the GOE case of Theorem~\ref{finite_spikes_GOE} with rank \texorpdfstring{$1$}{1}}\label{subsec:rank_1_GOE}

We begin with the GOE case of Theorem~\ref{finite_spikes_GOE} with $r = 1$ and $i = 1$. To simplify the notation, we write \((\theta_1,d_1)\) as \((\theta,d)\), and rewrite \(s_{1, k}^{(1)}\), \(H_N^{(D)}\), \(\lambda_k^{(D)}\), and \(\x_k^{(D)}\) as \(s_k\), \(H_N^{(d)}\), \(\lambda_k\), and \(\x_k\), respectively. We also write, with \(\Theta^{(i)}=()\)
\begin{align*}
  a_j:= {}& a_{1,j}=a_{1,j}^{\Theta^{(i)}}, & \mathcal G:= {}& \mathcal G^{(1)}=\mathcal G^{(1)}_{\Theta^{(i)}}.
\end{align*}
Also due to the orthogonal invariance of $H_N$, we assume $\w_1 = e_1$ without loss of generality.

Let \( \mu_1\ge \mu_2\ge \cdots \ge \mu_N \) as in \eqref{matrix eigva} be the eigenvalues of \(H_N\). By Part \ref{enu:rem:standard_spectral_facts_GOE_GUE} of Fact~\ref{rem:standard_spectral_facts}, we assume that \( \{\mu_1,\dots,\mu_N\}\cup\{\lambda_1,\dots,\lambda_N\} \) consists of \(2N\) distinct real numbers, which holds almost surely. From the orthogonal invariance of $H_N$, we have
\begin{equation*} % \label{eq:goe_spectral_decomp}
  H_N=U\diag(\mu_1,\dots,\mu_N)U^\top, \qquad U=(\u_1,\dots,\u_N),
\end{equation*}
where \(U\) is Haar distributed and independent of \((\mu_1,\dots,\mu_N)\), and \(\mathbf{u}_k\) is a unit eigenvector associated with \(\mu_k\); see \cite[Corollary~2.5.4]{Anderson-Guionnet-Zeitouni10}. Because \(U\) is Haar distributed and independent of the eigenvalues, its first row is a uniformly distributed unit vector, and there are i.i.d.\ standard Gaussian random variables \(\xi_{N,1},\dots,\xi_{N,N}\),
independent of \((\mu_1,\dots,\mu_N)\), such that
\begin{equation*}
    % \label{(4.5)}
    (U^\top e_1 )_i=\langle \u_i,e_1 \rangle
 = 
\frac{\xi_{N,i}}{\bigl(\sum_{j=1}^N \xi_{N,j}^2\bigr)^{1/2}},
\qquad i=1,\dots,N.
\end{equation*}
Accordingly, define the random meromorphic function
\begin{equation*} % \label{eq:defn_Gtilde_N}
  \begin{split}
    \widetilde{\mathcal G}_N(w) :={}& N^{1/3}\,e_1 ^\top\!\bigl((2+N^{-2/3}w)I-H_N\bigr)^{-1}e_1 -N^{1/3} \\
    ={}& \frac{N}{\sum_{i=1}^N \xi_{N,i}^2} \sum_{i=1}^N \frac{\xi_{N,i}^2}{w-N^{2/3}(\mu_i-2)} -N^{1/3}.
\end{split}
\end{equation*}
Then \eqref{eq:rank_one_root_identity} in Lemma \ref{lem:eigenvector_eigenvalue} with $\w = e_1$ implies
\begin{equation*} % \label{eq:s_kN}
  \widetilde{\mathcal G}_N(s_{k,N}) = N^{1/3}\Bigl(\frac1d-1\Bigr) = \frac{\theta}{d}, \quad \text{where} \quad s_{k,N}:=N^{2/3}(\lambda_k-2),
\end{equation*}
and the zeros of \( \widetilde{\mathcal G}_N(w) - \theta/d \) are precisely \(\{s_{k,N}\}_{k=1}^N\).

Since \(d>0\) for all sufficiently large \(N\), the rank-one interlacing
inequality gives
\[
\lambda_k\in(\mu_k,\mu_{k-1}),
\qquad k\ge1,
\]
with the convention \(\mu_0=+\infty\). Equivalently,
\[
s_{k,N}\in
\bigl(
N^{2/3}(\mu_k-2),
N^{2/3}(\mu_{k-1}-2)
\bigr).
\]
Thus \(s_{k,N}\) is the unique zero of \(\widetilde{\mathcal G}_N(w)-\theta/d\) in the \(k\)-th interval between two consecutive poles.

Differentiating \(\widetilde{\mathcal G}_N\), we obtain
\[
  (\widetilde{\mathcal G}_N)'(w) = -N^{-1/3}e_1^\top\bigl((2+N^{-2/3}w)I-H_N\bigr)^{-2}e_1.
\]
Evaluating \((\widetilde{\mathcal G}_N)'(w)\) at \(w=s_{k,N}\), we have
\begin{equation*} % \label{eq:G'_at_s_kN}
  (\widetilde{\mathcal G}_N)'(s_{k,N}) = -N^{-1/3} e_1 ^\top(\lambda_k I-H_N)^{-2}e_1 .
\end{equation*}
By \eqref{eq:eigenvector_eigenvalue} in Lemma \ref{lem:eigenvector_eigenvalue} with $\w = e_1$, we find
\begin{equation} \label{eq:overlap_formula_true}
  |\langle e_1 ,\x_k\rangle|^2 = -\frac{1}{N^{1/3}}\frac{1}{d^2\,(\widetilde{\mathcal G}_N)'(s_{k,N})}.
\end{equation}

Next, write
\begin{equation} \label{eq:tildeG_vs_G_rank1}
\widetilde{\mathcal G}_N(w)
=
c_N\mathcal G_N(w)+(c_N-1)N^{1/3},
\qquad
c_N:=\frac{N}{\sum_{i=1}^N \xi_{N,i}^2},
\end{equation}
where
\[
\mathcal G_N(w)
:=
\sum_{i=1}^N
\frac{\xi_{N,i}^2}{w-N^{2/3}(\mu_i-2)}
-N^{1/3}
\]
is the finite-\(N\) version of the Airy-Green function in the sense of \cite[Definition~8.10 and Theorem~8.20]{Bykhovskaya-Gorin-Sodin25}, equivalently the function in \eqref{eq:G_beta_N} with \(\beta=1\). Since \(\xi_{N,i}^2\) are independent \(\chi_1^2\) variables, we have \((c_N-1)N^{1/3}\to0\) almost surely. On the other hand, by \cite[Theorem~8.20]{Bykhovskaya-Gorin-Sodin25}, or equivalently by Proposition~\ref{enu:prop:beta_airy_green_2} with \(\beta=1\), there exists a coupling under which the functions \(\{\mathcal{G}_N\}_{N=1}^\infty\) are defined on the same probability space and
\[
\mathcal G_N\xrightarrow{\mathrm{mer}}\mathcal G
\qquad\text{almost surely},
\]
where \(\mathcal G=\mathcal G^{(1)}\) is the Airy-Green function in Proposition~\ref{prop:beta_airy_green}. Hence, \eqref{eq:tildeG_vs_G_rank1} implies that with respect to the same coupling,
\begin{equation} \label{eq:convergence_G_N_G}
\widetilde{\mathcal G}_N\xrightarrow[]{\mathrm{mer}}\mathcal G
\qquad\text{almost surely.}
\end{equation}

Recall that $\mathcal G$ is defined by the Airy$_1$ point process
$\{a_j\}_{j=1}^{\infty}$ and the random variables
$\{\xi_j^2\}_{j=1}^{\infty}$, which are i.i.d.\ with
$\chi_1^2$ distribution and are independent of
$\{a_j\}_{j=1}^{\infty}$. By
\cite[Lemma~2.7]{Bloemendal-Virag11}, the Dirichlet spectrum of the
scalar stochastic Airy operator is almost surely simple. Hence,
\[
a_1>a_2>\cdots
\qquad\text{almost surely}.
\]

Conditioned on $\{a_j\}_{j=1}^{\infty}$, for every
$x\in\mathbb R\setminus\{a_j:j\ge1\}$,
\begin{equation*} % \label{rank one limit function der}
\mathcal G'(x)
=
-\sum_{j=1}^{\infty}
\frac{\xi_j^2}{(x-a_j)^2}
<0
\qquad\text{almost surely}.
\end{equation*}
Thus, $\mathcal G$ is strictly decreasing on each interval between
two consecutive Airy points. Moreover, for every $j\ge2$,
\[
\lim_{x\downarrow a_j}\mathcal G(x)=+\infty,
\qquad
\lim_{x\uparrow a_{j-1}}\mathcal G(x)=-\infty.
\]
By \cite[Proposition~8.3]{Bykhovskaya-Gorin-Sodin25}, or equivalently
by the $\beta=1$ case of
Part~\ref{enu:lem:beta_airy_green_infty:1} of
Lemma~\ref{lem:beta_airy_green_infty}, almost surely,
\[
\mathcal G(x)\longrightarrow-\infty
\qquad\text{as }x\to+\infty.
\]
Therefore, for every $\theta\in\mathbb R$, the equation
\[
\mathcal G(x)=\theta
\]
has a unique real solution in each interval
\[
(a_k,a_{k-1}),\qquad k\ge1,
\]
where $a_0:=+\infty$. In particular, the solutions
$s_k=s_k^{(1)}$ appearing in
Theorem~\ref{finite_spikes_GOE} are well defined and satisfy
\[
s_1>s_2>\cdots
\qquad\text{almost surely}.
\]

Since $\theta/d\to\theta$ as $N\to\infty$, it follows from
\eqref{eq:convergence_G_N_G} and
Lemma~\ref{lem:simple-zero-stability-meromorphic} that, under the
coupling constructed above,
\begin{align}
s_{k,N}
&\longrightarrow s_k,
\label{eq:rank_one_GOE_root_convergence}\\
(\widetilde{\mathcal G}_N)'(s_{k,N})
&\longrightarrow \mathcal G'(s_k)
\label{eq:rank_one_GOE_derivative_convergence}
\end{align}
almost surely. Moreover, since the meromorphic convergence in
\eqref{eq:convergence_G_N_G} holds on a single event of probability
one, \eqref{eq:rank_one_GOE_root_convergence} and
\eqref{eq:rank_one_GOE_derivative_convergence} hold simultaneously
for all \(k\ge1\). Since \(d\to1\), the exact overlap identity
\eqref{eq:overlap_formula_true} then implies that the convergence in
\eqref{GOE_Airy Green fun} holds jointly for every finite \(K\).
This completes the proof of the GOE case of
Theorem~\ref{finite_spikes_GOE} with rank \(1\).

\subsection{Proof of Theorem~\ref{one_spike_beta}}

The argument is nearly the same as the proof of the rank-one GOE case
of Theorem~\ref{finite_spikes_GOE}. Although the eigenvectors of the
tridiagonal model are no longer uniformly distributed, the
distribution of their first coordinates is given in
\cite[Theorem~5.2.1]{Dumitriu03}, which suffices for our analysis.

Recall that
\[
\lambda_1\ge\cdots\ge\lambda_N
\]
are the eigenvalues of \(H_N^{(\beta)}+d e_1e_1^\top\), and let
\(\x_k\) be a normalized eigenvector associated with \(\lambda_k\).
Also, let
\[
\mu_1^{(\beta)}>\cdots>\mu_N^{(\beta)}
\]
be the eigenvalues of \(H_N^{(\beta)}\), as defined in
\eqref{eq:defn_mu^beta_k}. By
Part~\ref{enu:rem:standard_spectral_facts_3} of
Fact~\ref{rem:standard_spectral_facts}, we may work on the event of
probability one on which
\[
\{\mu_1^{(\beta)},\dots,\mu_N^{(\beta)}\}
\cup
\{\lambda_1,\dots,\lambda_N\}
\]
consists of \(2N\) distinct real numbers.

Using Lemma~\ref{lem:eigenvector_eigenvalue}, we obtain
\begin{equation}\label{eq:beta_exact_overlap}
|\langle e_1,\x_k\rangle|^2
=
\frac{1}{d^2}\,
\frac{1}{
e_1^\top(\lambda_k I-H_N^{(\beta)})^{-2}e_1
}.
\end{equation}

Define
\[
\widetilde{\mathcal G}_N^{(\beta)}(w)
:=
N^{1/3}
\left(
e_1^\top
\bigl((2+N^{-2/3}w)I-H_N^{(\beta)}\bigr)^{-1}
e_1-1
\right).
\]
By \eqref{eq:rank_one_root_identity} in
Lemma~\ref{lem:eigenvector_eigenvalue}, with \(\w=e_1\), we have
\begin{equation*} % \label{eq:beta_true_root_eqn}
\widetilde{\mathcal G}_N^{(\beta)}(s_{\beta,k,N})
=
N^{1/3}\left(\frac1d-1\right)
=
\frac{\theta}{d},
\qquad
s_{\beta,k,N}:=N^{2/3}(\lambda_k-2).
\end{equation*}
The zeros of
\[
\widetilde{\mathcal G}_N^{(\beta)}(w)-\frac{\theta}{d}
\]
are precisely \(\{s_{\beta,k,N}\}_{k=1}^N\).

Since \(d>0\) for all sufficiently large \(N\), rank-one
interlacing gives
\[
\lambda_k\in
\bigl(\mu_k^{(\beta)},\mu_{k-1}^{(\beta)}\bigr),
\qquad k\ge1,
\]
where \(\mu_0^{(\beta)}:=+\infty\). Equivalently,
\[
s_{\beta,k,N}
\in
\left(
N^{2/3}(\mu_k^{(\beta)}-2),
N^{2/3}(\mu_{k-1}^{(\beta)}-2)
\right).
\]
Thus, \(s_{\beta,k,N}\) is the unique zero of
\(\widetilde{\mathcal G}_N^{(\beta)}(w)-\theta/d\) in the \(k\)-th
interval between two consecutive poles.

Differentiating \(\widetilde{\mathcal G}_N^{(\beta)}\) and evaluating
at \(s_{\beta,k,N}\), we obtain
\[
\bigl(\widetilde{\mathcal G}_N^{(\beta)}\bigr)'
(s_{\beta,k,N})
=
-N^{-1/3}
e_1^\top
(\lambda_k I-H_N^{(\beta)})^{-2}
e_1.
\]
Together with \eqref{eq:beta_exact_overlap}, this gives
\begin{equation}\label{eq:beta_overlap_true}
|\langle e_1,\x_k\rangle|^2
=
-\frac{N^{-1/3}}{
d^2
\bigl(\widetilde{\mathcal G}_N^{(\beta)}\bigr)'
(s_{\beta,k,N})
}.
\end{equation}

Write the spectral decomposition as
\[
H_N^{(\beta)}
=
\sum_{i=1}^N
\mu_i^{(\beta)}
\u_{\beta,i}\u_{\beta,i}^\top.
\]
By \cite[Theorem~5.2.1]{Dumitriu03}, there exist i.i.d.\ random
variables
\[
\xi_{\beta,N,1}^2,\dots,\xi_{\beta,N,N}^2
\sim\chi_\beta^2,
\]
independent of \(\{\mu_i^{(\beta)}\}_{i=1}^N\), such that
\begin{equation*} % \label{eq:beta_eigenvector_weights}
\begin{split}
&\bigl(
|\langle e_1,\u_{\beta,1}\rangle|^2,
\dots,
|\langle e_1,\u_{\beta,N}\rangle|^2
\bigr)
\\
&\qquad\overset{d}{=}
\left(
\frac{\beta^{-1}\xi_{\beta,N,1}^2}{
\sum_{j=1}^N\beta^{-1}\xi_{\beta,N,j}^2
},
\dots,
\frac{\beta^{-1}\xi_{\beta,N,N}^2}{
\sum_{j=1}^N\beta^{-1}\xi_{\beta,N,j}^2
}
\right).
\end{split}
\end{equation*}

Define
\begin{equation}\label{eq:G_beta_N}
\mathcal G_N^{(\beta)}(w)
:=
\sum_{i=1}^N
\frac{\beta^{-1}\xi_{\beta,N,i}^2}{
w-N^{2/3}(\mu_i^{(\beta)}-2)
}
-N^{1/3}.
\end{equation}
Then, analogously to \eqref{eq:tildeG_vs_G_rank1},
\begin{equation*} % \label{eq:tildeG_vs_G_beta}
\widetilde{\mathcal G}_N^{(\beta)}(w)
=
c_{\beta,N}\mathcal G_N^{(\beta)}(w)
+
(c_{\beta,N}-1)N^{1/3},
\qquad
c_{\beta,N}
:=
\frac{N}{
\sum_{i=1}^N\beta^{-1}\xi_{\beta,N,i}^2
}.
\end{equation*}
Moreover,
\[
(c_{\beta,N}-1)N^{1/3}
\longrightarrow0
\qquad\text{almost surely}.
\]

We use the following extension of
\cite[Theorem~8.20]{Bykhovskaya-Gorin-Sodin25} from \(\beta=1\) to
all \(\beta>0\).

\begin{pro}\label{enu:prop:beta_airy_green_2}
Suppose that
\(\{\xi_{\beta,N,i}\}_{i=1}^N\) are independent of
\(\{\mu_i^{(\beta)}\}_{i=1}^N\) and satisfy
\(\xi_{\beta,N,i}^2\sim\chi_\beta^2\). There exists a coupling that
places
\[
\left(
\{\mu_i^{(\beta)}\}_{i=1}^N,
\{\xi_{\beta,N,i}^2\}_{i=1}^N
\right)_{N=1}^{\infty}
\]
and \(\mathcal G^{(\beta)}\) on the same probability space such that,
almost surely,
\begin{equation}
\label{eq:beta_airy_green_function_convergence}
\mathcal G_N^{(\beta)}
\xrightarrow{\mathrm{mer}}
\mathcal G^{(\beta)}.
\end{equation}
\end{pro}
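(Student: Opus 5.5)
We follow the strategy of \cite[Theorem~8.20]{Bykhovskaya-Gorin-Sodin25} for \(\beta=1\): build the coupling by Skorokhod representation, then split \(\mathcal G_N^{(\beta)}\) into a finite sum over the top eigenvalues plus a tail, and control the tail uniformly in \(N\).

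\textbf{Coupling.} By Proposition~\ref{pro:convergence_rank_1}, the rescaled points \(a_{N,i}:=N^{2/3}(\mu_i^{(\beta)}-2)\) converge jointly in law to \(\{a_{\beta,j}\}\), in the product topology for each fixed number of points. Since the \(\xi_{\beta,N,i}^2\sim\chi^2_\beta\) are independent of the eigenvalues, we may use the same sequence \(\{\xi_{\beta,i}\}_{i\ge1}\) for every \(N\). Skorokhod's representation theorem, applied on the Polish space \(\mathbb R^{\mathbb N}\), then gives a probability space on which \(a_{N,i}\to a_{\beta,i}\) almost surely for every fixed \(i\), with the \(\xi\)'s independent of everything else.

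\textbf{Splitting and the main estimate.} Pointwise convergence of the points is not enough, so we also need control on the tail. Fix a large \(L\) and split
\[
\mathcal G_N^{(\beta)}(w)=\sum_{i:\,a_{N,i}>-L}\frac{\beta^{-1}\xi_i^2}{w-a_{N,i}}+\Bigl[\sum_{i:\,a_{N,i}\le -L}\frac{\beta^{-1}\xi_i^2}{w-a_{N,i}}-N^{1/3}\Bigr].
\]
On compacts avoiding poles, the first part converges to its Airy analogue. The key claim is that the bracket equals \(-\tfrac2\pi\sqrt{L}+o_L(1)\), uniformly in \(N\) large and in \(w\) on compacts. We would prove this in three steps.
\begin{itemize}
\item[(a)] Rigidity of the \(\beta\)-ensemble eigenvalues near the edge (Bourgade--Erd\H{o}s--Yau), which compares \(a_{N,i}\) with the semicircle quantiles \(\gamma_i\approx-(3\pi i/2)^{2/3}\) with errors \(i^{-1/3+\epsilon}\).
\item[(b)] The deterministic computation \(\int \frac{N\,d\mu_{\SC}}{w-N^{2/3}(x-2)}\approx N^{1/3}-\tfrac2\pi\sqrt{L}\) restricted to \(\{N^{2/3}(x-2)\le -L\}\). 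This uses \(G_{\SC}(2)=1\) and the edge density \(\tfrac1\pi\sqrt{2-x}\).
\item[(c)] Fluctuations of the weights, controlled by \(\sum_i \frac{\beta^{-1}(\xi_i^2-\beta)}{w-a_{N,i}}\), whose variance \(\sum_i |w-a_{N,i}|^{-2}\lesssim \sum_i i^{-4/3}\) is small beyond \(L\). This holds almost surely by a martingale/Kolmogorov argument in dyadic blocks.
\end{itemize}
The bulk and far-edge contributions (\(|a_{N,i}|\gtrsim N^{2/3}\)) are handled by global semicircle law estimates. For the almost-sure, rather than in-probability, statement, we would take \(L\) along a sequence and use the a.s.\ existence of the limit in Proposition~\ref{prop:beta_airy_green}. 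If the rigidity inputs only hold with high probability, we pass to a subsequence, or re-couple so that the convergence holds in probability and then almost surely along the Skorokhod coupling of the whole function sequence. The space of Herglotz-type meromorphic functions with the spherical metric is Polish, so Skorokhod applies there as well.

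\textbf{Main obstacle.} The hardest step is the uniform-in-\(N\) tail control in (a)--(c): matching \(N^{1/3}\) against the bulk sum requires mesoscopic rigidity for general \(\beta\) all the way from the edge scale to macroscopic scale. An alternative route, which we would try first for simplicity, is to avoid rigidity entirely. Note that \(\mathcal G_N^{(\beta)}\) is, up to the \(c_{\beta,N}\) normalization, the rescaled resolvent entry \(N^{1/3}(e_1^\top(\cdot)^{-1}e_1-1)\) of the tridiagonal matrix. It is therefore determined by the Jacobi recursion/Riccati diffusion, which converges to the stochastic Airy operator Weyl–Titchmarsh function, following Bloemendal--Vir\'ag. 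One then identifies that limit with \(\mathcal G^{(\beta)}\) via its pole/residue structure and the asymptotics of Lemma~\ref{lem:beta_airy_green_infty}. The residue identification is where independence of the \(\chi_\beta\) weights must be checked.
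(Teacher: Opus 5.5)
Your proposal has a genuine gap, and it sits exactly at the step you flag as the main obstacle. The claim that the bracket equals $-\frac{2}{\pi}\sqrt{L}+o_L(1)$ uniformly in large $N$ is never established, and the input you cite cannot establish it. Bourgade--Erd\H{o}s--Yau rigidity for $\beta$-ensembles does not give errors $i^{-1/3+\epsilon}$. In rescaled units it gives $|a_{N,i}-N^{2/3}(\gamma_i-2)|\le N^{\xi}i^{-1/3}$, with an $N$-dependent loss. Inserting this into the comparison with the quantiles, the best bound on the tail error is of order $N^{\xi}\sum_{i\gtrsim L^{3/2}} i^{-1/3}\, i^{-4/3}\asymp N^{\xi}L^{-1}$. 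This does not vanish as $N\to\infty$ for fixed $L$. The damage comes from the intermediate range $L^{3/2}\lesssim i\lesssim N^{c}$, where the factor $N^{\xi}$ cannot be traded for a power of $i$. What you would actually need is finite-$N$ rigidity with losses depending only on $i$, uniformly in $N$, and this is not available off the shelf.

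Your fallback through the Weyl--Titchmarsh function of the stochastic Airy operator remains a heuristic. You would have to prove the convergence of the rescaled Jacobi $m$-function to a continuum object. You would also have to identify its residues as independent $\beta^{-1}\chi^2_\beta$ weights with the correct additive constant. So it does not close the gap.

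The missing idea is to use the Nevanlinna structure, so that the tail is only tested at a single point of the imaginary axis, and only in $L^2$. The paper writes both $\mathcal G_N^{(\beta)}$ and $\mathcal G^{(\beta)}$ in the class $\Omega_-$ as $\gamma+\sum_j w_j\bigl(\frac{1}{w-x_j}+\frac{x_j}{1+x_j^2}\bigr)$, padding with $x_{j;N}=x_{N;N}-(j-N)$ and $w_{j;N}=0$ for $j>N$. It then applies \cite[Theorem~8.13]{Bykhovskaya-Gorin-Sodin25}. That theorem needs two inputs. The first is finite-dimensional convergence of the ordered poles and weights, from Proposition~\ref{pro:convergence_rank_1} with the same $\xi$'s for all $N$. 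The second is
\[
\mathcal G^{(\beta)}(iR)-\phi(R)\to0\ \text{a.s.},\qquad \lim_{R\to\infty}\limsup_{N\to\infty}\mathbb E\bigl|\mathcal G_N^{(\beta)}(iR)-\phi(R)\bigr|^2=0,\qquad \phi(R)=-\tfrac{1+i}{\sqrt2}\sqrt R .
\]
These give convergence in law in the spherical metric, and Skorokhod then yields the coupling.

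The second condition is an averaged statement. Write $\mathcal G_N^{(\beta)}(iR)=N^{1/3}\bigl(m_N(2+iRN^{-2/3})-1\bigr)+\sum_j(\beta^{-1}\xi_{\beta,N,j}^2-1)/(iR-x_{j;N})$.
\begin{itemize}
\item The first term is handled by the $L^2$ local law $\mathbb E|m_N(2+i\eta)-m(2+i\eta)|^2\le C/(N\eta)^2$ of \cite{Bourgade-Mody-Pain22} at $\eta=RN^{-2/3}$. This gives $C/R^2$ with no $N^{\xi}$ loss. Combine it with the expansion $m(2+i\eta)=1-\frac{1+i}{\sqrt2}\sqrt\eta+o(\sqrt\eta)$.
\item The second term has conditional second moment $\frac{2}{\beta}\sum_j(R^2+x_{j;N}^2)^{-1}=-\frac{2}{\beta}\frac{N^{1/3}}{R}\Im m_N(2+iRN^{-2/3})$. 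Its expectation has $\limsup_N$ of order $R^{-1/2}$, by the same estimate.
\end{itemize}
No finite-$N$ rigidity of individual eigenvalues is needed, and no cutoff at $-L$. Rigidity enters only for the limiting Airy$_\beta$ process, through Proposition~\ref{prop:beta_airy_green} and Lemma~\ref{lem:beta_airy_green_infty}, which supply the almost-sure existence of $\mathcal G^{(\beta)}$ and the first condition above.
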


The proof of Proposition~\ref{enu:prop:beta_airy_green_2} is given in
Section~\ref{subsec:proof_enu:prop:beta_airy_green_2}.

Recall that \(\mathcal G^{(\beta)}\) is defined in
\cite[Remark~4.5]{Bykhovskaya-Gorin-Sodin25} and
\eqref{eq:airy_green_beta_limit} by the Airy\(_\beta\) point process
\(\{a_{\beta,j}\}_{j=1}^{\infty}\) and the random variables
\(\{\xi_{\beta,j}^2\}_{j=1}^{\infty}\), which are i.i.d.\ with
\(\chi_\beta^2\) distribution and are independent of
\(\{a_{\beta,j}\}_{j=1}^{\infty}\).

By \cite[Lemma~2.7]{Bloemendal-Virag11}, the Dirichlet spectrum of the
scalar stochastic Airy operator is almost surely simple. Hence,
\[
a_{\beta,1}>a_{\beta,2}>\cdots
\qquad\text{almost surely}.
\]

Conditioned on \(\{a_{\beta,j}\}_{j=1}^{\infty}\), for every
\(x\in\mathbb R\setminus\{a_{\beta,j}:j\ge1\}\),
\begin{equation*} % \label{rank one beta function limit}
\bigl(\mathcal G^{(\beta)}\bigr)'(x)
=
-\sum_{j=1}^{\infty}
\frac{\beta^{-1}\xi_{\beta,j}^2}{
(x-a_{\beta,j})^2
}
<0
\qquad\text{almost surely}.
\end{equation*}
Thus, \(\mathcal G^{(\beta)}\) is strictly decreasing on each interval
between two consecutive Airy\(_\beta\) points. Moreover, for every
\(j\ge2\),
\[
\lim_{x\downarrow a_{\beta,j}}
\mathcal G^{(\beta)}(x)
=
+\infty,
\qquad
\lim_{x\uparrow a_{\beta,j-1}}
\mathcal G^{(\beta)}(x)
=
-\infty.
\]
By Part~\ref{enu:lem:beta_airy_green_infty:1} of
Lemma~\ref{lem:beta_airy_green_infty}, almost surely,
\[
\mathcal G^{(\beta)}(x)
\longrightarrow-\infty
\qquad\text{as }x\to+\infty.
\]
Therefore, for every \(\theta\in\mathbb R\), the equation
\[
\mathcal G^{(\beta)}(x)=\theta
\]
has a unique real solution
\[
s_{\beta,k}
\in
(a_{\beta,k},a_{\beta,k-1}),
\qquad k\ge1,
\]
where \(a_{\beta,0}:=+\infty\).

Since \(\theta/d\to\theta\) as \(N\to\infty\), it follows from
\eqref{eq:beta_airy_green_function_convergence} and
Lemma~\ref{lem:simple-zero-stability-meromorphic} that
\begin{align}
s_{\beta,k,N}
&\longrightarrow s_{\beta,k},
\label{eq:beta_root_convergence}\\
\bigl(\widetilde{\mathcal G}_N^{(\beta)}\bigr)'
(s_{\beta,k,N})
&\longrightarrow
\bigl(\mathcal G^{(\beta)}\bigr)'(s_{\beta,k})
\label{eq:beta_derivative_convergence}
\end{align}
almost surely under the coupling above. Moreover, since the
meromorphic convergence holds on a single event of probability one,
\eqref{eq:beta_root_convergence} and
\eqref{eq:beta_derivative_convergence} hold simultaneously for all
\(k\ge1\). Since \(d\to1\), the exact overlap identity
\eqref{eq:beta_overlap_true} then implies that the convergence in
\eqref{eq:limit_rank_one_beta} holds jointly for every finite \(K\).
This completes the proof of Theorem~\ref{one_spike_beta}.
\subsection{Proof of the GOE case of
Theorem~\ref{finite_spikes_GOE} by induction on the rank}
\label{subsec:proof_mult_rank_GOE}

We prove by induction on \(r\) the overlap convergence in
Theorem~\ref{finite_spikes_GOE}, simultaneously with the almost-sure
simplicity of the higher-rank Airy\(_1\) point process
\[
\bigl\{
a_{1,k}^{\Theta}
\bigr\}_{k\ge1}, \quad \Theta = (\theta_1,\dots,\theta_r),
\]
that is,
\begin{equation*} % \label{eq:simplicity_higher_rank_Airy_GOE}
a_{1,1}^{\Theta}
>
a_{1,2}^{\Theta}
>
\cdots
\qquad\text{almost surely}.
\end{equation*}

For \(r=1\), the point process
\(\{a_{1,k}^{(\theta_1)}\}_{k\ge1}\) is the negative spectrum of the
scalar stochastic Airy operator with Robin boundary condition
\(f'(0)=\theta_1f(0)\). By
\cite[Lemma~2.7]{Bloemendal-Virag11}, every eigenspace of this
operator is at most one-dimensional. Therefore,
\[
a_{1,1}^{(\theta_1)}
>
a_{1,2}^{(\theta_1)}
>
\cdots
\qquad\text{almost surely}.
\]
The rank-one overlap convergence was proved in
Section~\ref{subsec:rank_1_GOE}, so the induction base is complete.

Assume now that both assertions hold for all ranks up to \(r-1\), and
consider rank \(r\). By orthogonal invariance and relabeling the
spikes, we may assume
\[
\w_j=e_j,\qquad 1\le j\le r,
\qquad\text{and}\qquad i=r.
\]
For notational simplicity, set
\begin{equation}\label{eq:notation_mult_GOE}
\begin{aligned}
\Theta^{(r)}
&:=(\theta_1,\dots,\theta_{r-1}),
&
a_k^{\Theta^{(r)}}
&:=a_{1,k}^{\Theta^{(r)}},
\\
s_k^{(r)}
&:=s_{1,k}^{(r)},
&
\mathcal G_{r-1}
&:=\mathcal G^{(1)}_{(r-1),r^c}.
\end{aligned}
\end{equation}
By the induction hypothesis, the rank-\((r-1)\) Airy point process is
almost surely simple:
\begin{equation*} % \label{eq:induction_simplicity_rank_r_minus_one}
a_1^{\Theta^{(r)}}
>
a_2^{\Theta^{(r)}}
>
\cdots
\qquad\text{almost surely}.
\end{equation*}

Define
\[
\widehat H_N
:=
H_N+\sum_{j=1}^{r-1}d_je_je_j^\top,
\qquad
H_N^{(D)}
=
\widehat H_N+d_re_re_r^\top.
\]
Let
\[
\widehat\mu_{1,N}\ge\cdots\ge\widehat\mu_{N,N}
\]
be the eigenvalues of \(\widehat H_N\), with corresponding normalized
eigenvectors
\(\widehat\u_{1,N},\dots,\widehat\u_{N,N}\). By
Part~\ref{enu:rem:standard_spectral_facts_GOE_GUE} of
Fact~\ref{rem:standard_spectral_facts}, the spectra of
\(\widehat H_N\) and \(H_N^{(D)}\) are almost surely simple and
disjoint. The choices of signs of the eigenvectors are immaterial,
since only their squared coordinates are used below.

Put
\[
a_{m,N}:=N^{2/3}(\widehat\mu_{m,N}-2)
\]
and define
\begin{equation}\label{eq:defn_Gtilde_N_r}
\begin{aligned}
\widetilde{\mathcal G}_{N,r}(w)
&:=
N^{1/3}
\left[
e_r^\top
\bigl((2+N^{-2/3}w)I-\widehat H_N\bigr)^{-1}
e_r-1
\right]
\\
&=
\sum_{m=1}^N
\frac{
N|\langle e_r,\widehat\u_{m,N}\rangle|^2
}{
w-a_{m,N}
}
-N^{1/3}.
\end{aligned}
\end{equation}

By \eqref{eq:rank_one_root_identity} in
Lemma~\ref{lem:eigenvector_eigenvalue},
\begin{equation*} % \label{eq:rank_r_root_equation_finite_N}
\widetilde{\mathcal G}_{N,r}(s_{k,N}^{(r)})
=
\frac{\theta_r}{d_r},
\qquad
s_{k,N}^{(r)}
:=
N^{2/3}(\lambda_k^{(D)}-2).
\end{equation*}
Since \(d_r>0\) for all sufficiently large \(N\), rank-one
interlacing gives
\[
s_{k,N}^{(r)}
\in
(a_{k,N},a_{k-1,N}),
\qquad
a_{0,N}:=+\infty.
\]
Thus, \(s_{k,N}^{(r)}\) is the unique zero of
\(\widetilde{\mathcal G}_{N,r}-\theta_r/d_r\) in this interval.

The exact overlap identity in
Lemma~\ref{lem:eigenvector_eigenvalue} gives
\begin{equation}\label{eq:general-overlap-exact}
|\langle e_r,\x_k^{(D)}\rangle|^2
=
-\frac{N^{-1/3}}{
d_r^2\,
\widetilde{\mathcal G}_{N,r}'(s_{k,N}^{(r)})
}.
\end{equation}

We use the following joint edge convergence.

\begin{lem}\label{lem:joint-edge-gaussian-general-short}
For every fixed \(M\ge1\),
\begin{equation}\label{eq:joint-edge-weight-convergence}
\left(
(a_{m,N})_{m=1}^M,\,
\bigl(
N|\langle e_r,\widehat\u_{m,N}\rangle|^2
\bigr)_{m=1}^M
\right)
\Longrightarrow
\left(
(a_m^{\Theta^{(r)}})_{m=1}^M,\,
(\eta_m^2)_{m=1}^M
\right),
\end{equation}
where \(\eta_1,\dots,\eta_M\) are i.i.d.\ standard Gaussian random
variables, independent of
\(\{a_m^{\Theta^{(r)}}\}_{m\ge1}\).
\end{lem}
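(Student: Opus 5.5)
The plan is to exploit the invariance of the GOE under orthogonal conjugations that fix $e_1,\dots,e_{r-1}$. Let $O$ be a Haar-distributed orthogonal matrix on $\Span\{e_r,\dots,e_N\}$, extended by the identity on $\Span\{e_1,\dots,e_{r-1}\}$, and independent of $H_N$. Then $O H_N O^\top \overset{d}{=} H_N$ and $O$ commutes with $\sum_{j<r} d_j e_je_j^\top$. Hence $\widehat H_N \overset{d}{=} O\widehat H_N O^\top$. The eigenvalues are unchanged by this conjugation, while the eigenvectors become $O\widehat\u_{m,N}$.

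Consequently, conditional on $\widehat H_N$, the vector $\bigl(\langle e_r, O\widehat\u_{m,N}\rangle\bigr)_{m=1}^N = \bigl(\langle O^\top e_r,\widehat\u_{m,N}\rangle\bigr)_m$ is the coordinate vector of a vector $\mathbf v := O^\top e_r$, which is uniform on the unit sphere of $V:=\Span\{e_r,\dots,e_N\}$, in the basis $\{\widehat\u_{m,N}\}$. Writing $\mathbf v = \mathbf g/\|\mathbf g\|$ with $\mathbf g$ a standard Gaussian vector in $V$ (independent of $\widehat H_N$), we get
\[
\sqrt N\langle e_r,\widehat\u_{m,N}\rangle \overset{d}{=} \frac{\sqrt N}{\|\mathbf g\|}\,\langle \mathbf g, \Pi_V\widehat\u_{m,N}\rangle ,
\]
with $\|\mathbf g\|^2/N\to1$. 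Conditionally on $\widehat H_N$, the vector $(\langle \mathbf g,\Pi_V\widehat\u_{m,N}\rangle)_{m\le M}$ is centered Gaussian with covariance $\bigl(\langle \Pi_V\widehat\u_{m,N},\Pi_V\widehat\u_{m',N}\rangle\bigr)_{m,m'}=\delta_{mm'}-\bigl(\langle \Pi_{V^\perp}\widehat\u_{m,N},\Pi_{V^\perp}\widehat\u_{m',N}\rangle\bigr)$.

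So the key step is to show that $\Pi_{V^\perp}\widehat\u_{m,N}\to0$ in probability for each fixed $m$, i.e. $\langle e_j,\widehat\u_{m,N}\rangle\to0$ for $j<r$. Granting this, the conditional covariance tends to $I_M$, so the weights become asymptotically i.i.d.\ $\eta_m^2$ independent of the eigenvalues. Joint convergence with $(a_{m,N})_{m\le M}$ then follows from Proposition~\ref{pro:convergence_rank_r} applied with the $r-1$ spikes $\Theta^{(r)}$, via conditioning and characteristic functions.

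For the delocalization step I would use the rank-$(r-1)$ version of the present theorem, available by the induction hypothesis: for each $j<r$, $N^{1/3}|\langle e_j,\widehat\u_{m,N}\rangle|^2$ converges in distribution to a finite random variable. This is the overlap convergence of Theorem~\ref{finite_spikes_GOE} at rank $r-1$, and its limit is finite because $(\mathcal G)'<0$ almost surely. It follows that $|\langle e_j,\widehat\u_{m,N}\rangle|^2=O_P(N^{-1/3})\to0$. This is the main obstacle, since it relies on the induction being organized so that rank-$(r-1)$ overlaps for every spike index are already known. If a more self-contained route were preferred, one could instead bound $\langle e_j,\widehat\u_{m,N}\rangle^2\le \|\Pi_{V^\perp}\widehat\u_{m,N}\|^2$ using a Schur-complement identity for the $(r-1)\times(r-1)$ block of the resolvent of $\widehat H_N$, together with local-law bounds near the edge.
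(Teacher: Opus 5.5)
Your proposal is correct and follows essentially the paper's route. Invariance of $\widehat H_N$ under conjugation by $I_{r-1}\oplus O$ reduces the weights to coordinates of a uniform vector on the unit sphere of the $(N-r+1)$-dimensional complement, and the rank-$(r-1)$ induction hypothesis (tightness of $N^{1/3}|\langle e_i,\widehat{\mathbf u}_{j,N}\rangle|^2$ for $i<r$) forces the covariance $I_M-A_N^\top A_N$ to $I_M$. The only difference is in the invariance step: you randomize $e_r$ through an independent Haar $O$ and the representation $\mathbf g/\|\mathbf g\|$, which gives an exactly conditionally Gaussian vector with covariance $I_M-A_N^\top A_N$, whereas the paper conditions on $A_N$, writes the lower block as $Q_NC_N^{1/2}$ with $Q_N$ Haar on a Stiefel manifold, and invokes a CLT for its first row.
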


As a consequence, we obtain the higher-rank analogue of
\cite[Theorem~8.20]{Bykhovskaya-Gorin-Sodin25}.

\begin{pro}\label{prop:general-meromorphic}
There exists a coupling under which
\(\widetilde{\mathcal G}_{N,r}\) and \(\mathcal G_{r-1}\) are defined
on the same probability space and
\begin{equation*} % \label{eq:convergence_G_N_G_r}
\widetilde{\mathcal G}_{N,r}
\xrightarrow{\mathrm{mer}}
\mathcal G_{r-1}
\qquad\text{almost surely}.
\end{equation*}
\end{pro}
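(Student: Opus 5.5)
We prove Proposition~\ref{prop:general-meromorphic} by reducing it to the same mechanism used for \cite[Theorem~8.20]{Bykhovskaya-Gorin-Sodin25} and Proposition~\ref{enu:prop:beta_airy_green_2}. The idea is to split $\widetilde{\mathcal G}_{N,r}$ into a finite "head" of poles near the edge and a "tail" that is regular on compacts and converges to a deterministic-plus-small correction.

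Fix a large cutoff $M$ and a level $x<0$. On compact sets $K$ to the right of $x$, write
\[
\widetilde{\mathcal G}_{N,r}(w)=\sum_{m:\,a_{m,N}>x}\frac{N|\langle e_r,\widehat\u_{m,N}\rangle|^2}{w-a_{m,N}}+T_{N,x}(w),
\]
where $T_{N,x}$ collects the remaining poles together with the subtraction $-N^{1/3}$.

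\textbf{Head.} By Lemma~\ref{lem:joint-edge-gaussian-general-short} and Skorokhod's representation, taken jointly over all $M$ via a diagonal argument on a countable family, we obtain a coupling in which $a_{m,N}\to a_m^{\Theta^{(r)}}$ and $N|\langle e_r,\widehat\u_{m,N}\rangle|^2\to\eta_m^2$ almost surely for every fixed $m$. The number of indices with $a_{m,N}>x$ is tight and eventually equals the count of limiting points above $x$, provided $x$ is not an atom. Consequently the head converges meromorphically to $\sum_{a_m^{\Theta^{(r)}}>x}\eta_m^2/(w-a_m^{\Theta^{(r)}})$. Simplicity of the limiting points, available from the induction hypothesis, ensures that the poles converge one by one.

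\textbf{Tail.} We need to show that
\[
\lim_{x\to-\infty}\limsup_{N\to\infty}\ \sup_{w\in K}\Bigl|T_{N,x}(w)+\frac{2}{\pi}\sqrt{-x}\Bigr|=0
\]
in probability, and then upgrade this to almost sure convergence along a subsequence in the coupling. The key structural point is that $\widehat H_N$ differs from $H_N$ only by a rank-$(r-1)$ perturbation supported on $e_1,\dots,e_{r-1}$, which is orthogonal to $e_r$. We exploit this through the Schur complement: $e_r^\top(z-\widehat H_N)^{-1}e_r$ equals a rational expression in the $r\times r$ block of $(z-H_N)^{-1}$ indexed by $e_1,\dots,e_r$. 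Away from the edge ($\Re w\le x$ scale), isotropic local laws for GOE apply. They give $e_i^\top(z-H_N)^{-1}e_j=G_{\SC}(z)\delta_{ij}+O(N^{-1/2+\epsilon}\eta^{-1/2})$. Since $d_j=1-\theta_jN^{-1/3}$ and $G_{\SC}(2+N^{-2/3}w)\approx 1-N^{-1/3}\sqrt w$, the finite-rank correction changes $N^{1/3}(\cdot-1)$ only by terms that vanish in the regime $|w|\gg1$. Hence the tail is governed by $N^{1/3}(G_{\SC}-1)$ restricted to spectral parameters far from the edge, which yields the $-\tfrac2\pi\sqrt{-x}$ compensation exactly as in the rank-one case.

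A concrete way to implement this is to run the tail estimate in the same form as the proof of Proposition~\ref{enu:prop:beta_airy_green_2}, with $\mu_i$ replaced by $\widehat\mu_{i,N}$. Two inputs make this possible. Eigenvalue rigidity for $\widehat H_N$ (interlacing with $H_N$ up to $r-1$ shifts) controls the locations of bulk eigenvalues. Delocalization bounds for $N|\langle e_r,\widehat\u_{m,N}\rangle|^2$, with conditional mean close to $1$ in the bulk, control the weights. Together they handle the variance of $\sum_{a_{m,N}\le x}(N|\langle e_r,\widehat\u_{m,N}\rangle|^2-1)/(w-a_{m,N})$ via a second-moment bound of order $\sum_{a_{m,N}\le x}|w-a_{m,N}|^{-2}\lesssim(-x)^{-1/2}$.

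Finally, we combine head and tail and let $x\to-\infty$. The limit coincides with $\mathcal G_{r-1}$ as defined in \eqref{eq:defn_Airy_Green_multiple}, whose existence is guaranteed by Proposition~\ref{pro:multispike_airy_green}. The passage from convergence in probability to almost sure meromorphic convergence uses a further Skorokhod coupling applied to the random elements $(\widetilde{\mathcal G}_{N,r})$ in the Polish space of meromorphic functions with the spherical metric; the limit law is identified by the head and tail analysis above.

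The main obstacle is the tail estimate. Unlike in the rank-one setting, the weights $N|\langle e_r,\widehat\u_{m,N}\rangle|^2$ are neither exactly $\chi^2$ nor independent of the eigenvalues. We would first try to control the tail through the Schur-complement representation combined with the isotropic local law, thereby avoiding any need for exact distributional structure of the weights.
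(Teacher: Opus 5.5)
There is a genuine gap, and it sits exactly at the step you flag as the main obstacle: the tail estimate is never established, and the tools you propose cannot deliver it on the edge scale.

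Take the isotropic error $O(N^{-1/2+\epsilon}\eta^{-1/2})$ you quote. At $\eta=N^{-2/3}\Im w$, after multiplying by the normalization $N^{1/3}$, it becomes $N^{1/6+\epsilon}|\Im w|^{-1/2}$, which diverges. Even the optimal edge bound $\sqrt{|\Im m|/(N\eta)}+(N\eta)^{-1}$ gives only $O\bigl(N^{\epsilon}(R^{-1/4}+R^{-1})\bigr)$ at $w=iR$. On a fixed compact set near the real axis it gives nothing better than $O(N^{\epsilon})$. This is unavoidable, since $\widetilde{\mathcal G}_{N,r}$ has order-one random fluctuations there.

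Moreover, the Schur-complement (Woodbury) formula describes the whole function $e_r^\top(z-\widehat H_N)^{-1}e_r$, not the tail $T_{N,x}$. No spectral parameter isolates the poles $a_{m,N}\le x$, and you give no mechanism for turning information at one point into uniform control of $T_{N,x}$ on compacts.

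Your second route has the same defects. Rigidity with the usual $N^{\epsilon}$ losses only bounds the error in $\sum_{a_{m,N}\le x}(w-a_{m,N})^{-1}-N^{1/3}\approx-\frac2\pi\sqrt{-x}$ by roughly $N^{\epsilon}(-x)^{-1}$, which does not vanish as $N\to\infty$ at fixed $x$. Delocalization plus the conditional mean of the weights $N|\langle e_r,\widehat\u_{m,N}\rangle|^2$ says nothing about their covariances, and those are exactly what a variance bound of the form $\sum_m|w-a_{m,N}|^{-2}$ needs. Proposition~\ref{enu:prop:beta_airy_green_2} is not proved by a tail estimate either, so there is no rank-one argument of that type to imitate. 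Your head analysis and the final Skorokhod step are fine.

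The missing idea is that no uniform tail estimate is needed.

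\emph{Reduction to the imaginary axis.} Both $\widetilde{\mathcal G}_{N,r}$ and $\mathcal G_{r-1}$ have the form $\gamma+\sum_j w_j\bigl(\frac{1}{w-x_j}+\frac{x_j}{1+x_j^2}\bigr)$ with $w_j\ge0$, i.e.\ they lie in the class $\Omega_-$ of \cite[Definition~8.10]{Bykhovskaya-Gorin-Sodin25}. \cite[Theorem~8.13]{Bykhovskaya-Gorin-Sodin25} then gives the coupled almost-sure meromorphic convergence from two inputs only:
\begin{enumerate}
\item finite-dimensional convergence of poles and weights, which is Lemma~\ref{lem:joint-edge-gaussian-general-short};
\item the imaginary-axis condition, namely $\mathcal G_{r-1}(iR)-\phi(R)\to0$ (Lemma~\ref{lem:beta_airy_green_infty}) together with $\lim_{R\to\infty}\limsup_{N\to\infty}\mathbb E|\widetilde{\mathcal G}_{N,r}(iR)-\phi(R)|^2=0$, where $\phi(R)=-\frac{1+i}{\sqrt2}\sqrt R$.
\end{enumerate}
The Nevanlinna structure (nonnegative weights) is what lets control at the single point $iR$, with $R$ large, replace your tail estimate.

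\emph{Proof of the point condition.} It follows from exact conditional structure rather than eigenvector local laws. Let $\mathcal F_{r-1,N}$ be the $\sigma$-algebra generated by the eigenvalues of $\widehat H_N$ and the first $r-1$ rows $\alpha_{1,N},\dots,\alpha_{r-1,N}$ of its eigenvector matrix. Conditionally on $\mathcal F_{r-1,N}$, the row $\alpha_{r,N}$ is uniform on the unit sphere of an $(N-r+1)$-dimensional subspace; this is the residual invariance you already use in Lemma~\ref{lem:joint-edge-gaussian-general-short}.

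Write $\widetilde{\mathcal G}_{N,r}(iR)+N^{1/3}=N\alpha_{r,N}^\top D_N(R)\alpha_{r,N}$ with $D_N(R)=\diag\bigl((iR-a_{m,N})^{-1}\bigr)_{m=1}^N$, and let $\widehat m_N$ be the Stieltjes transform of the empirical spectral measure of $\widehat H_N$. Lemma~\ref{lem:spherical-moments-subspace} shows that the conditional mean is $N^{1/3}\bigl(\widehat m_N(2+iRN^{-2/3})-1\bigr)+O(r/R)+O\bigl(rN^{-1}|\tr D_N(R)|\bigr)$. It also shows that the conditional variance is at most $C_r\sum_m|iR-a_{m,N}|^{-2}=-\frac{N^{1/3}}{R}\Im\widehat m_N(2+iRN^{-2/3})$.

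Both quantities are controlled by the rank inequality $|\widehat m_N-m_N|\le C_r/(N\eta)$ and the $N^{\epsilon}$-free $L^2$ bound $\mathbb E|m_N(2+i\eta)-m(2+i\eta)|^2\le C/(N^2\eta^2)$ of \cite{Bourgade-Mody-Pain22}, applied at $\eta=RN^{-2/3}$. No isotropic local law, rigidity or delocalization input is required.
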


The proof of Proposition~\ref{prop:general-meromorphic} is given in
Section~\ref{subsec:proof_prop:general-meromorphic}. We first complete
the induction argument and then prove
Lemma~\ref{lem:joint-edge-gaussian-general-short}.

Recall that \(\mathcal G_{r-1}\) is the Airy--Green function defined
from the rank-\((r-1)\) Airy point process
\[
\{a_k^{\Theta^{(r)}}\}_{k\ge1}
\]
and an independent sequence
\(\{\xi_k^2\}_{k\ge1}\) of \(\chi_1^2\) random variables. Conditioned
on the Airy points,
\[
\mathcal G_{r-1}'(x)
=
-\sum_{j=1}^{\infty}
\frac{\xi_j^2}{
(x-a_j^{\Theta^{(r)}})^2
}
<0
\]
for every
\(x\notin\{a_j^{\Theta^{(r)}}:j\ge1\}\), almost surely. Moreover, for
every \(j\ge2\),
\[
\lim_{x\downarrow a_j^{\Theta^{(r)}}}
\mathcal G_{r-1}(x)
=
+\infty,
\qquad
\lim_{x\uparrow a_{j-1}^{\Theta^{(r)}}}
\mathcal G_{r-1}(x)
=
-\infty,
\]
and Part~\ref{enu:lem:beta_airy_green_infty:1} of
Lemma~\ref{lem:beta_airy_green_infty} gives
\[
\mathcal G_{r-1}(x)
\longrightarrow-\infty
\qquad\text{as }x\to+\infty.
\]

Consequently, for every \(k\ge1\), the equation
\[
\mathcal G_{r-1}(x)=\theta_r
\]
has a unique solution
\begin{equation*} % \label{eq:rank_r_Airy_Green_roots}
s_k^{(r)}
\in
\bigl(
a_k^{\Theta^{(r)}},
a_{k-1}^{\Theta^{(r)}}
\bigr),
\qquad
a_0^{\Theta^{(r)}}:=+\infty.
\end{equation*}
In particular,
\[
s_1^{(r)}>s_2^{(r)}>\cdots
\qquad\text{almost surely}.
\]

Since \(\theta_r/d_r\to\theta_r\), Proposition
\ref{prop:general-meromorphic} and
Lemma~\ref{lem:simple-zero-stability-meromorphic} imply, under the
coupling above,
\begin{equation}\label{eq:rank_r_roots_derivatives_convergence}
\left(
s_{k,N}^{(r)},
\widetilde{\mathcal G}_{N,r}'(s_{k,N}^{(r)})
\right)
\longrightarrow
\left(
s_k^{(r)},
\mathcal G_{r-1}'(s_k^{(r)})
\right)
\end{equation}
almost surely, simultaneously for all \(k\ge1\).

On the other hand, Proposition~\ref{pro:convergence_rank_r} gives,
for every fixed \(K\ge1\),
\[
\bigl(s_{k,N}^{(r)}\bigr)_{k=1}^K
\Longrightarrow
\bigl(
a_{1,k}^{\Theta}
\bigr)_{k=1}^K.
\]
Combining this with
\eqref{eq:rank_r_roots_derivatives_convergence} and uniqueness of the
weak limit yields
\begin{equation}\label{eq:rank_r_roots_equal_Airy_process}
\bigl(s_k^{(r)}\bigr)_{k=1}^K
\overset{d}{=}
\bigl(
a_{1,k}^{\Theta}
\bigr)_{k=1}^K.
\end{equation}
Since the roots \(\{s_k^{(r)}\}_{k\ge1}\) are almost surely strictly
ordered, and since \(K\) is arbitrary,
\eqref{eq:rank_r_roots_equal_Airy_process} proves that the rank-\(r\)
Airy point process is almost surely simple:
\[
a_{1,1}^{\Theta}
>
a_{1,2}^{\Theta}
>
\cdots
\qquad\text{almost surely}.
\]
This completes the induction proof of the simplicity assertion in
Proposition~\ref{pro:existence_rank_r} for \(\beta=1\).

Finally, by \eqref{eq:general-overlap-exact}, for every fixed
\(K\ge1\),
\[
\left(
s_{k,N}^{(r)},
N^{1/3}|\langle e_r,\x_k^{(D)}\rangle|^2
\right)_{k=1}^K
\longrightarrow
\left(
s_k^{(r)},
-\frac{1}{
\mathcal G_{r-1}'(s_k^{(r)})
}
\right)_{k=1}^K
\qquad\text{almost surely}
\]
under the coupling above. Hence the corresponding convergence holds
jointly in distribution. This completes the simultaneous induction
and the proof of the GOE case of
Theorem~\ref{finite_spikes_GOE}.

\begin{proof}[Proof of
Lemma~\ref{lem:joint-edge-gaussian-general-short}]
By Proposition~\ref{pro:convergence_rank_r}, for every fixed
\(M\ge1\),
\begin{equation*} % \label{eq:edge-convergence-r-minus-one}
(a_{m,N})_{m=1}^M
\Longrightarrow
(a_m^{\Theta^{(r)}})_{m=1}^M.
\end{equation*}

Define
\[
A_N
:=
\bigl(
\langle e_i,\widehat\u_{j,N}\rangle
\bigr)_{
1\le i\le r-1,\,
1\le j\le M
}
\in\mathbb R^{(r-1)\times M},
\]
and
\[
B_N
:=
\bigl(
\langle e_i,\widehat\u_{j,N}\rangle
\bigr)_{
r\le i\le N,\,
1\le j\le M
}
\in\mathbb R^{(N-r+1)\times M}.
\]
The choices of signs of the eigenvectors only multiply the columns
of \(A_N\) and \(B_N\) by signs and hence do not affect the squared
coordinates considered below.

Since
\(\widehat\u_{1,N},\dots,\widehat\u_{M,N}\)
are orthonormal,
\[
A_N^\top A_N+B_N^\top B_N=I_M.
\]
Set
\[
C_N:=B_N^\top B_N=I_M-A_N^\top A_N.
\]

The law of \(\widehat H_N\) is invariant under conjugation by matrices
of the form
\[
I_{r-1}\oplus O,
\qquad
O\in\mathrm O(N-r+1).
\]
Consequently, conditionally on the eigenvalues and on the upper block
\(A_N\), the lower block \(B_N\), modulo the irrelevant column signs,
is Haar distributed on the orthogonal orbit determined by
\(B_N^\top B_N=C_N\). Therefore, for the sign-invariant quantities
considered here, its conditional law is the same as that of
\[
Q_NC_N^{1/2},
\]
where \(Q_N\) is Haar distributed on the Stiefel manifold
\[
V_M(\mathbb R^{N-r+1})
=
\left\{
Q\in\mathbb R^{(N-r+1)\times M}:Q^\top Q=I_M
\right\}
\]
and may be taken independent of the conditioning variables.

By the induction hypothesis, for every fixed
\(1\le i\le r-1\) and \(1\le j\le M\), the sequence
\[
N^{1/3}
|\langle e_i,\widehat\u_{j,N}\rangle|^2
\]
is tight. Since \(r\) and \(M\) are fixed,
\[
\|A_N\|_{\mathrm{HS}}^2
=
\sum_{i=1}^{r-1}\sum_{j=1}^M
|\langle e_i,\widehat\u_{j,N}\rangle|^2
=
O_{\mathbb P}(N^{-1/3}).
\]
It follows that
\[
C_N^{1/2}
=
I_M+O_{\mathbb P}(N^{-1/3})
\]
in operator norm.

Let \(q_N\) denote the first row of \(Q_N\). For fixed \(M\),
\[
\sqrt{N-r+1}\,q_N
\Longrightarrow
(\eta_1,\dots,\eta_M),
\]
where \(\eta_1,\dots,\eta_M\) are i.i.d.\ standard Gaussian random
variables; see \cite[Theorem~2.8]{Meckes19}. Since \(Q_N\) is
independent of the conditioning variables, the limiting Gaussian
vector is independent of every subsequential limit of
\[
\bigl(
(a_{m,N})_{m=1}^M,A_N
\bigr).
\]

Combining these observations with \((N-r+1)/N\to1\), we obtain
\eqref{eq:joint-edge-weight-convergence}
which proves the lemma.
\end{proof}

\subsection{Sketch of proof of the GUE case of
Theorem~\ref{finite_spikes_GOE}}
\label{subsec:proof_GUE}

The proof is the complex analogue of the GOE argument. We prove
simultaneously the overlap convergence and the almost-sure simplicity
of the higher-rank Airy\(_2\) point process
\(\{a_{2,k}^{\Theta}\}_{k\ge1}\) ($\Theta = (\theta_1,\dots,\theta_r)$).

For \(r=1\), this point process is the negative spectrum of the scalar
stochastic Airy operator with Robin boundary condition
\(f'(0)=\theta_1f(0)\). Its almost-sure simplicity follows directly
from \cite[Lemma~2.7]{Bloemendal-Virag11}. The rank-one overlap
convergence follows from the \(\beta=2\) case of
Theorem~\ref{one_spike_beta}. This establishes the induction base.

Assume now that the rank-\((r-1)\) Airy\(_2\) point process is almost
surely simple. By unitary invariance and relabeling the spikes, we may
assume that \(\w_j=e_j\), \(1\le j\le r\), and \(i=r\). Set
\(\Theta^{(r)}=(\theta_1,\dots,\theta_{r-1})\) and
\(\mathcal G_{r-1}^{(2)}
:=\mathcal G^{(2)}_{(r-1),r^c}\). Thus,
\[
a_{2,1}^{\Theta^{(r)}}
>
a_{2,2}^{\Theta^{(r)}}
>
\cdots
\qquad\text{almost surely}.
\]

Define
\[
\widehat H_N
:=
H_N+\sum_{j=1}^{r-1}d_je_je_j^\ast,
\qquad
H_N^{(D)}
=
\widehat H_N+d_re_re_r^\ast.
\]
Let \(\widehat\mu_{1,N}\ge\cdots\ge\widehat\mu_{N,N}\) be the
eigenvalues of \(\widehat H_N\), with corresponding normalized
eigenvectors
\(\widehat\u_{1,N},\dots,\widehat\u_{N,N}\), and put
\(a_{2,m,N}:=N^{2/3}(\widehat\mu_{m,N}-2)\). Define
\[
\begin{aligned}
\widetilde{\mathcal G}^{(2)}_{N,r}(w)
&:=
N^{1/3}
\left[
e_r^\ast
\bigl((2+N^{-2/3}w)I-\widehat H_N\bigr)^{-1}
e_r-1
\right] \\
&=
\sum_{m=1}^N
\frac{
N|\langle e_r,\widehat\u_{m,N}\rangle|^2
}{
w-a_{2,m,N}
}
-N^{1/3}.
\end{aligned}
\]

Set \(s_{2,k,N}^{(r)}:=N^{2/3}(\lambda_k^{(D)}-2)\). The rank-one
determinant identity and Lemma~\ref{lem:eigenvector_eigenvalue}, with
transpose replaced by adjoint, give
\begin{equation}\label{eq:GUE_rank_r_root_overlap}
\widetilde{\mathcal G}^{(2)}_{N,r}(s_{2,k,N}^{(r)})
=
\frac{\theta_r}{d_r},
\qquad
|\langle e_r,\x_k^{(D)}\rangle|^2
=
-\frac{N^{-1/3}}{
d_r^2
(\widetilde{\mathcal G}^{(2)}_{N,r})'
(s_{2,k,N}^{(r)})
}.
\end{equation}

The proof of
Lemma~\ref{lem:joint-edge-gaussian-general-short} carries over to the
complex setting after replacing
\(\mathbb R\), \(\mathrm O(\cdot)\), and \((\cdot)^\top\) by
\(\mathbb C\), \(\mathrm U(\cdot)\), and \((\cdot)^\ast\),
respectively. In particular, for every fixed \(M\ge1\),
\[
\left(
(a_{2,m,N})_{m=1}^M,\,
\bigl(
N|\langle e_r,\widehat\u_{m,N}\rangle|^2
\bigr)_{m=1}^M
\right)
\Longrightarrow
\left(
(a_{2,m}^{\Theta^{(r)}})_{m=1}^M,\,
(|\eta_m|^2)_{m=1}^M
\right),
\]
where \(\eta_1,\dots,\eta_M\) are i.i.d.\ standard complex Gaussian
random variables, independent of
\(\{a_{2,m}^{\Theta^{(r)}}\}_{m\ge1}\). Here
\(|\eta_m|^2\overset{d}{=}\frac12\chi_2^2\).

Indeed, the conditional Haar-orbit argument in the proof of
Lemma~\ref{lem:joint-edge-gaussian-general-short} is unchanged, with
the real Stiefel manifold replaced by its complex counterpart. The
necessary second and fourth moment estimates follow from
Lemma~\ref{lem:spherical-moments-subspace} with
\(\beta_{\mathbb F}=2\).

The proof of Proposition~\ref{prop:general-meromorphic} therefore also
carries over to the complex setting and yields a coupling under which
\begin{equation}\label{eq:GUE_general_meromorphic_convergence}
\widetilde{\mathcal G}^{(2)}_{N,r}
\xrightarrow{\mathrm{mer}}
\mathcal G_{r-1}^{(2)}
\qquad\text{almost surely}.
\end{equation}
The limiting weights are precisely
\(\frac12\chi_2^2\), so the limiting function is
\(\mathcal G^{(2)}_{(r-1),r^c}\).

By the induction hypothesis, the poles
\(\{a_{2,k}^{\Theta^{(r)}}\}_{k\ge1}\) are almost surely strictly
ordered. The same argument as in the GOE case shows that, for every \(k\ge1\), the equation
\(\mathcal G_{r-1}^{(2)}(x)=\theta_r\) has a unique solution
\[
s_{2,k}^{(r)}
\in
\bigl(
a_{2,k}^{\Theta^{(r)}},
a_{2,k-1}^{\Theta^{(r)}}
\bigr),
\qquad
a_{2,0}^{\Theta^{(r)}}:=+\infty.
\]
In particular,
\[
s_{2,1}^{(r)}
>
s_{2,2}^{(r)}
>
\cdots
\qquad\text{almost surely}.
\]

Since \(\theta_r/d_r\to\theta_r\),
\eqref{eq:GUE_general_meromorphic_convergence} and
Lemma~\ref{lem:simple-zero-stability-meromorphic} imply
\begin{equation}\label{eq:GUE_rank_r_root_derivative_convergence}
\left(
s_{2,k,N}^{(r)},
(\widetilde{\mathcal G}^{(2)}_{N,r})'
(s_{2,k,N}^{(r)})
\right)
\longrightarrow
\left(
s_{2,k}^{(r)},
(\mathcal G_{r-1}^{(2)})'
(s_{2,k}^{(r)})
\right)
\end{equation}
almost surely under the corresponding coupling, simultaneously for
all \(k\ge1\).

For every fixed \(K\ge1\),
Proposition~\ref{pro:convergence_rank_r} gives
\[
\bigl(s_{2,k,N}^{(r)}\bigr)_{k=1}^K
\Longrightarrow
\bigl(
a_{2,k}^{\Theta}
\bigr)_{k=1}^K.
\]
Combining this convergence with
\eqref{eq:GUE_rank_r_root_derivative_convergence} and uniqueness of
the weak limit yields
\begin{equation*} % \label{eq:GUE_rank_r_roots_equal_Airy_process}
\bigl(s_{2,k}^{(r)}\bigr)_{k=1}^K
\overset{d}{=}
\bigl(
a_{2,k}^{\Theta}
\bigr)_{k=1}^K.
\end{equation*}
Since the roots on the left are almost surely strictly ordered and
\(K\) is arbitrary, this proves the almost-sure simplicity of the
rank-\(r\) Airy\(_2\) point process:
\[
a_{2,1}^{\Theta}
>
a_{2,2}^{\Theta}
>
\cdots
\qquad\text{almost surely}.
\]
This establishes the simplicity assertion in
Proposition~\ref{pro:existence_rank_r} for \(\beta=2\).

Finally, substituting
\eqref{eq:GUE_rank_r_root_derivative_convergence} into the exact
overlap identity \eqref{eq:GUE_rank_r_root_overlap} gives, for every
fixed \(K\ge1\),
\[
\left(
s_{2,k,N}^{(r)},
N^{1/3}|\langle e_r,\x_k^{(D)}\rangle|^2
\right)_{k=1}^K
\longrightarrow
\left(
s_{2,k}^{(r)},
-\frac{1}{
(\mathcal G_{r-1}^{(2)})'(s_{2,k}^{(r)})
}
\right)_{k=1}^K
\qquad\text{almost surely}
\]
under the coupling above. Hence the corresponding convergence holds
jointly in distribution. This completes the simultaneous induction
and the proof of the GUE case of
Theorem~\ref{finite_spikes_GOE}.

\section{Proof of Proposition~\ref{prop:one-coordinate-BBP}}
% \label{sec:critical-overlap-BBP}

In this section, we use the notation of
Proposition~\ref{prop:one-coordinate-BBP}. When
\(\beta\notin\{1,2\}\), we have \(r=1\) and
\(\Theta^{(i)}=()\), and the symbols
\[
\theta_i,\quad
a_{\beta,k}^{\Theta^{(i)}},\quad
s_{\beta,k}^{(i)},\quad
\mathcal G^{(\beta)}_{(r-1),i^c},\quad
\xi_{\beta,k}^2
\]
are understood as
\[
\theta,\quad
a_{\beta,k},\quad
s_{\beta,k},\quad
\mathcal G^{(\beta)},\quad
\xi_{\beta,k}^2,
\]
respectively, as in Theorem~\ref{one_spike_beta}.

Near a pole \(a_{\beta,m}^{\Theta^{(i)}}\),
Proposition~\ref{pro:multispike_airy_green} for
\(\beta\in\{1,2\}\), and
Proposition~\ref{prop:beta_airy_green} for general \(\beta>0\), give
\begin{align}
\mathcal G^{(\beta)}_{(r-1),i^c}(x)
&=
\frac{\beta^{-1}\xi_{\beta,m}^2}
{x-a_{\beta,m}^{\Theta^{(i)}}}
+O(1),
\label{eq:pole-expansion-direct}\\
\bigl(\mathcal G^{(\beta)}_{(r-1),i^c}\bigr)'(x)
&=
-\frac{\beta^{-1}\xi_{\beta,m}^2}
{\bigl(x-a_{\beta,m}^{\Theta^{(i)}}\bigr)^2}
+O(1).
\label{eq:pole-derivative-direct}
\end{align}
Moreover,
\(\bigl(\mathcal G^{(\beta)}_{(r-1),i^c}\bigr)'(x)<0\)
away from the poles, so
\(\mathcal G^{(\beta)}_{(r-1),i^c}\) is strictly decreasing on every
interval between consecutive poles.

As \(\theta_i\to+\infty\), monotonicity gives
\[
s_{\beta,k}^{(i)}
\downarrow
a_{\beta,k}^{\Theta^{(i)}}.
\]
Taking \(m=k\) in \eqref{eq:pole-expansion-direct}, we obtain
\[
\theta_i
=
\frac{\beta^{-1}\xi_{\beta,k}^2}
{s_{\beta,k}^{(i)}-a_{\beta,k}^{\Theta^{(i)}}}
+O(1),
\]
which proves the first asymptotic relation in
\eqref{eq:positive-direct}. Substitution into
\eqref{eq:pole-derivative-direct} then proves the second asymptotic
relation in \eqref{eq:positive-direct}.

If \(\theta_i\to-\infty\) and \(k\ge2\), then
\[
s_{\beta,k}^{(i)}
\uparrow
a_{\beta,k-1}^{\Theta^{(i)}}.
\]
Taking \(m=k-1\) in \eqref{eq:pole-expansion-direct} gives
\[
\theta_i
=
\frac{\beta^{-1}\xi_{\beta,k-1}^2}
{s_{\beta,k}^{(i)}-a_{\beta,k-1}^{\Theta^{(i)}}}
+O(1).
\]
This proves the first asymptotic relation in
\eqref{eq:negative-kge2-direct}; the second asymptotic relation follows
from \eqref{eq:pole-derivative-direct}.

It remains to consider \(k=1\) as \(\theta_i\to-\infty\).
Monotonicity gives
\[
s_{\beta,1}^{(i)}\longrightarrow+\infty.
\]
By the defining root equations \eqref{eq:roots_beta_12} and
\eqref{eq:roots_beta_positive}, together with
Part~\ref{enu:lem:beta_airy_green_infty:1} of
Lemma~\ref{lem:beta_airy_green_infty},
\[
\theta_i
=
\mathcal G^{(\beta)}_{(r-1),i^c}
\bigl(s_{\beta,1}^{(i)}\bigr)
=
-\sqrt{s_{\beta,1}^{(i)}}+o(1),
\]
which proves the first asymptotic relation in
\eqref{eq:negative-k1-direct}.

The same lemma gives, uniformly in the right half-plane,
\[
\mathcal G^{(\beta)}_{(r-1),i^c}(z)+\sqrt z=o(1),
\qquad |z|\to\infty.
\]
Cauchy's formula on \(|z-x|=x/2\) therefore yields
\[
\bigl(\mathcal G^{(\beta)}_{(r-1),i^c}\bigr)'(x)
=
-\frac1{2\sqrt x}+o(x^{-1}),
\qquad x\to+\infty.
\]
Evaluating this at \(x=s_{\beta,1}^{(i)}\) and using the first
asymptotic relation in \eqref{eq:negative-k1-direct}, we obtain
\[
\bigl(\mathcal G^{(\beta)}_{(r-1),i^c}\bigr)'
\bigl(s_{\beta,1}^{(i)}\bigr)
=
-\frac1{2|\theta_i|}
+o(|\theta_i|^{-2}).
\]
Consequently,
\[
-\frac1{
\bigl(\mathcal G^{(\beta)}_{(r-1),i^c}\bigr)'
\bigl(s_{\beta,1}^{(i)}\bigr)}
=
2|\theta_i|+o(1),
\]
which proves the second asymptotic relation in
\eqref{eq:negative-k1-direct}.

We now assume \(\beta\in\{1,2\}\), since the remaining argument uses
Part~\ref{enu:lem:beta_airy_green_infty:2} of
Lemma~\ref{lem:beta_airy_green_infty}. For notational convenience
within this part only, set
\[
\begin{gathered}
t:=-\theta_i,\qquad
\mathcal G:=\mathcal G^{(\beta)}_{(r-1),i^c},
\qquad
s_t:=s_{\beta,1}^{(i)},\\
U_t:=t^{1/2}\bigl(\mathcal G(t^2)+t\bigr),
\qquad
V_t:=t^{1/2}
\left(
-\frac1{\mathcal G'(t^2)}-2t
\right).
\end{gathered}
\]
Applying Part~\ref{enu:lem:beta_airy_green_infty:2} of Lemma~\ref{lem:beta_airy_green_infty} with \(x=t^2\), we
obtain
\begin{equation}\label{eq:joint-G-reciprocal-at-t2}
\begin{pmatrix}
U_t\\ V_t
\end{pmatrix}
\Longrightarrow
\mathcal N\!\left(
\begin{pmatrix}0\\0\end{pmatrix},
\frac1\beta
\begin{pmatrix}
1&-1\\
-1&2
\end{pmatrix}
\right).
\end{equation}

Since \(\mathcal G(s_t)=-t\) and
\(\mathcal G(s_t)=-\sqrt{s_t}+o(1)\), we have
\(\sqrt{s_t}=t+o(1)\), and hence
\[
s_t=t^2+o(t)
\qquad\text{almost surely}.
\]
By the mean value theorem, for some random point \(\zeta_t\) between
\(t^2\) and \(s_t\),
\[
0
=
\mathcal G(s_t)+t
=
\mathcal G(t^2)+t
+\mathcal G'(\zeta_t)(s_t-t^2).
\]
Since \(\zeta_t=t^2+o(t)\), the derivative asymptotics in
Lemma~\ref{lem:beta_airy_green_infty} give
\[
\mathcal G'(\zeta_t)
=
-\frac1{2t}\bigl(1+o(1)\bigr)
\qquad\text{almost surely}.
\]
Together with \(U_t=O_{\mathbb P}(1)\), this yields
\begin{equation}\label{eq:supercritical-root-expansion-short}
\frac{s_t-t^2}{t^{1/2}}
=
2U_t+o_{\mathbb P}(1).
\end{equation}
Consequently,
\[
\frac{s_t-t^2}{2t^{1/2}}
\Longrightarrow
\mathcal N\!\left(0,\frac1\beta\right),
\]
which proves the first convergence in
\eqref{eq:normal_supercritical}.

For the overlap fluctuation, set
\[
R(x):=-\frac1{\mathcal G'(x)}.
\]
The derivative asymptotics in
Lemma~\ref{lem:beta_airy_green_infty} imply
\[
R'(x)
=
\frac{\mathcal G''(x)}{\mathcal G'(x)^2}
=
\frac1{\sqrt x}\bigl(1+o(1)\bigr)
\qquad\text{almost surely}.
\]
Applying the mean value theorem once more, for some random point
\(\eta_t\) between \(t^2\) and \(s_t\),
\[
R(s_t)-R(t^2)
=
R'(\eta_t)(s_t-t^2).
\]
Since \(\eta_t=t^2+o(t)\), it follows from
\eqref{eq:supercritical-root-expansion-short} that
\[
t^{1/2}\bigl(R(s_t)-R(t^2)\bigr)
=
2U_t+o_{\mathbb P}(1).
\]
Therefore,
\begin{align*}
t^{1/2}
\left[
-\frac1{\mathcal G'(s_t)}-2t
\right]
&=
t^{1/2}\bigl(R(s_t)-R(t^2)\bigr)
+t^{1/2}\bigl(R(t^2)-2t\bigr)\\
&=
2U_t+V_t+o_{\mathbb P}(1).
\end{align*}
If \((U,V)\) denotes the Gaussian limit in
\eqref{eq:joint-G-reciprocal-at-t2}, then
\[
\operatorname{Var}(2U+V)
=
\frac4\beta+\frac2\beta-\frac4\beta
=
\frac2\beta.
\]
It follows that
\[
t^{1/2}
\left[
-\frac1{\mathcal G'(s_t)}-2t
\right]
\Longrightarrow
\mathcal N\!\left(0,\frac2\beta\right),
\]
which proves the second convergence in
\eqref{eq:normal_supercritical} and completes the proof of
Proposition~\ref{prop:one-coordinate-BBP}.
\section{Properties of Airy-Green functions}
\label{sec:supplemental}

In this section we prove Propositions~\ref{prop:beta_airy_green}, \ref{pro:multispike_airy_green} and Lemma \ref{lem:beta_airy_green_infty} in Section \ref{s.preliminaries}, and prove Propositions \ref{enu:prop:beta_airy_green_2} and \ref{prop:general-meromorphic} that are used in the proofs of Theorems \ref{finite_spikes_GOE} and \ref{one_spike_beta}.

\subsection{Proofs of Proposition~\ref{prop:beta_airy_green} and Lemma \ref{lem:beta_airy_green_infty} for $\mathcal G^{(\beta)}$}

The proof of Proposition~\ref{prop:beta_airy_green} extends the
argument of \cite[Theorem~8.1]{Bykhovskaya-Gorin-Sodin25} from
\(\beta=1\) to general \(\beta>0\). For
Lemma~\ref{lem:beta_airy_green_infty}, the proof of
Part~\ref{enu:lem:beta_airy_green_infty:1} is adapted from the
argument of \cite[Proposition~8.3]{Bykhovskaya-Gorin-Sodin25}, whereas
the fluctuation and derivative estimates in
Part~\ref{enu:lem:beta_airy_green_infty:2} require additional
arguments.

\subsubsection{Rigidity input for the Airy\texorpdfstring{$_\beta$}{beta} point process}

In the proof of Proposition~\ref{prop:beta_airy_green}, we need the rigidity result of the Airy\texorpdfstring{$_\beta$}{beta} point process that is a generalization of \cite[Lemma 8.9]{Bykhovskaya-Gorin-Sodin25}.

\begin{lem}\label{lem:airy-bound}
  For every \(\varepsilon>0\), there exists a random variable \(\mathcal J=\mathcal J(\varepsilon)\) such that, almost surely, for all \(j>\mathcal J\),
\begin{equation}\label{eq:est_ap_k}
  \left|a_{\beta,j}+\left(\frac{3\pi j}{2}\right)^{2/3}\right| \le j^{\frac16+\varepsilon}.
\end{equation}
\end{lem}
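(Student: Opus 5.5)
The plan is to prove rigidity in two stages. First we get a counting-function estimate for the Airy$_\beta$ point process. Then we invert it to obtain location bounds. Let $N(x):=\#\{j:a_{\beta,j}>-x\}$ for $x>0$; this is the number of eigenvalues of $\mathcal H_\beta$ (Dirichlet) below $x$. The deterministic Airy counterpart is $N_0(x)=\frac{2}{3\pi}x^{3/2}+O(1)$. Inverting $N_0(x)=j$ gives exactly $x=(3\pi j/2)^{2/3}$. So \eqref{eq:est_ap_k} follows once we show that, almost surely, for every $\varepsilon>0$,
\[
\bigl|N(x)-\tfrac{2}{3\pi}x^{3/2}\bigr|\le x^{\varepsilon}\quad\text{for all } x\ge x_0(\omega).
\]
Indeed, set $x=-a_{\beta,j}$ and $N(x)=j$. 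Then $x^{3/2}=\frac{3\pi}{2}j+O(j^{\varepsilon'})$, hence $x=(3\pi j/2)^{2/3}+O(j^{-1/3+\varepsilon'})$. This is much stronger than the required $j^{1/6+\varepsilon}$ error. The weaker target exponent $1/6$ therefore leaves room to accept a counting error as large as $x^{3/4-\delta}$. Concretely, an error $E$ in $N$ translates to an error of order $E\,x^{-1/2}\sim E j^{-1/3}$ in location, so we only need $E\le j^{1/2+\varepsilon}\asymp x^{3/4+\varepsilon'}$.

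For the counting estimate, I would use the Riccati/diffusion representation of Ram\'irez--Rider--Vir\'ag. For fixed $\lambda$, the number of eigenvalues of $\mathcal H_\beta$ below $\lambda$ equals the number of blow-ups to $-\infty$ of the diffusion
\[
dp_t=\tfrac{2}{\sqrt\beta}\,db_t+(t-\lambda-p_t^2)\,dt,\qquad p_0=+\infty.
\]
On $[0,\lambda]$ the drift makes the process rotate. In the phase (Pr\"ufer) variable, the explosion count is $\frac1\pi\int_0^{\lambda}\sqrt{\lambda-t}\,dt+\text{martingale}+\text{bounded corrections}$. The leading term is $\frac{2}{3\pi}\lambda^{3/2}$. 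The martingale part has quadratic variation of order $\int_0^\lambda(\lambda-t)^{-1/2}dt\asymp\lambda^{1/2}$, or at worst of order $\log\lambda$ after averaging the oscillation. Either way, we get $\mathbb E|N(\lambda)-\frac{2}{3\pi}\lambda^{3/2}|^2\le C\lambda^{1/2}$. After $t>\lambda$ there are only $O(1)$ further explosions, with exponential tails.

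A Chebyshev/Markov bound then gives
\[
\mathbb P\bigl(|N(\lambda)-\tfrac{2}{3\pi}\lambda^{3/2}|>\lambda^{1/4+\delta}\bigr)\le C\lambda^{-2\delta}.
\]
Higher moments, or an exponential martingale bound, improve this to $C_p\lambda^{-p}$. Apply this on the grid $\lambda_n=n^{\gamma}$. Borel--Cantelli makes the bound hold for all large grid points. Monotonicity of $N$ then controls intermediate $\lambda$, because $\lambda_{n+1}^{3/2}-\lambda_n^{3/2}\lesssim \lambda_n^{1/2}\cdot\lambda_n^{1-1/\gamma}$, which is small relative to $\lambda^{3/4}$ for suitable $\gamma$. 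This gives $|N(x)-\frac{2}{3\pi}x^{3/2}|\le x^{3/4-\delta}$ eventually, which suffices by the inversion step above.

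The main obstacle is the moment bound for the martingale fluctuation of the explosion count. This requires controlling the time spent near the turning point $t\approx\lambda$, where the rotation is slow and the noise competes with the drift. My first attempt would be to split $[0,\lambda-\lambda^{1/3+\delta}]$, where a Pr\"ufer-angle It\^o computation gives an explicit martingale with the bounds above, from the window near $\lambda$. On the window I would only use crude bounds: it contributes at most $O(\lambda^{\delta'})$ explosions, with stretched-exponential tails, via comparison with the unperturbed Airy Riccati equation. If that fails, an alternative for $\beta=1,2$ is the determinantal/Pfaffian variance bound $\Var N(x)=O(\log x)$. For general $\beta$, one could instead transfer finite-$N$ rigidity for G$\beta$E (Bourgade--Erd\H{o}s--Yau) through Proposition~\ref{pro:convergence_rank_1}, but only with uniform-in-$N$ tail bounds.
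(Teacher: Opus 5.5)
\paragraph{Comparison with the paper's proof.} Your proof is correct in outline, but it takes a genuinely different route from the paper. The paper does no analysis of its own here. It quotes \cite[Theorem~1 and Corollary~2]{Ashbury_Bridgwood22}, which gives, for each $\varepsilon\in(0,1/6)$, a per-point bound $\mathbb P\bigl(|a_{\beta,k}+(3\pi k/2)^{2/3}|\ge Ck^{1/6+\varepsilon}\bigr)\le e^{-k^{\chi}}$ for $k\ge k_0$. It then applies Borel--Cantelli in $k$ directly, with no counting function, grid, or inversion. You instead rebuild rigidity from the Ram\'irez--Rider--Vir\'ag oscillation representation: a counting estimate at fixed $\lambda$, Borel--Cantelli on $\lambda_n=n^\gamma$, monotone interpolation, and inversion with location error $E\,j^{-1/3}$. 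This skeleton is sound. The interpolation forces $\gamma<4/3$, and even a bare second-moment bound then suffices (for example, threshold $\lambda^{0.7}$ with $\gamma=5/4$). The paper's route costs two lines but inherits the exponent $1/6$. That weakness is exactly what the remark after the proof of Lemma~\ref{lem:beta_airy_green_infty} blames for Part~\ref{enu:lem:beta_airy_green_infty:2} being restricted to $\beta\in\{1,2\}$. Your route is self-contained for every $\beta>0$ and, carried out, gives much more than is stated. A counting error of order $\lambda^{1/2+\varepsilon}$ already yields location error $O(j^{2\varepsilon/3})$, i.e.\ the $j^{\varepsilon}$ rigidity of \cite[Lemma~8.9]{Bykhovskaya-Gorin-Sodin25} for all $\beta$. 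That would remove the restriction.

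\paragraph{Two points in your sketch need repair.} First, the quadratic variation. With $p=\kappa\cot\phi$ and $\kappa=\sqrt{\lambda-t}$, It\^o's formula gives
\[
d\phi=\kappa\,dt-\frac{\sin 2\phi}{4(\lambda-t)}\,dt+\frac{4\sin^3\phi\cos\phi}{\beta(\lambda-t)}\,dt-\frac{2}{\sqrt\beta}\,\frac{\sin^2\phi}{\sqrt{\lambda-t}}\,db_t .
\]
So the quadratic-variation density is bounded by $4/(\beta(\lambda-t))$, not of order $(\lambda-t)^{-1/2}$. It is not integrable at $t=\lambda$, which is the real reason a cutoff is needed. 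On $[0,\lambda-w]$, both the drift corrections and the quadratic variation are $O(\log(\lambda/w))$, so the martingale is sub-Gaussian at scale $\sqrt{\log\lambda}$.

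Second, the window count. A window of length $\lambda^{1/3+\delta}$ contains about $\frac{2}{3\pi}\lambda^{1/2+3\delta/2}$ explosions, not $O(\lambda^{\delta'})$. This is harmless against your $\lambda^{3/4}$ tolerance. However, the pathwise comparison with the noiseless Airy Riccati equation that you invoke is not available. The clean substitute is monotonicity of the lifted angle flow in its initial value, combined with $\pi$-periodicity of the coefficients. Restarting at $+\infty$ at time $\lambda-w$ then bounds the number of later explosions by $1$ plus a variable with the law of $N(w)$, which does not depend on $\lambda$. Taking $w$ fixed, $\bigl|N(\lambda)-\frac{2}{3\pi}\lambda^{3/2}\bigr|$ becomes $O(\log\lambda)$ plus a sub-Gaussian term and an $N(w)$-type term, modulo a tail bound for $N(w)$.
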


The $\beta=1$ case of Lemma~\ref{lem:airy-bound} is stated in
\cite[Lemma~8.9]{Bykhovskaya-Gorin-Sodin25}, where
$a_{\beta,j}$ is denoted by $\mathfrak{a}_j$, and the exponent
$1/6+\epsilon$ can be strengthened to $\epsilon$. For $\beta=2$,
the corresponding stronger estimate follows from the mean asymptotics
stated immediately before \cite[Theorem~1]{Soshnikov00b} and the
variance estimate in that theorem. In
\cite[Section~8]{Bykhovskaya-Gorin-Sodin25}, the $\beta=1$ estimate
is obtained using the corresponding mean asymptotics together with a
variance estimate deduced from the $\beta=2$ case via the
Forrester--Rains relation \cite[Theorem~4.3]{Forrester-Rains01};
see also \cite{ORourke10}. Here we instead use
\cite[Theorem~1 and Corollary~2]{Ashbury_Bridgwood22}; see also
\cite[Proposition~3.2]{Zhong19}.
\begin{proof}
From \cite[Theorem~1 and Corollary~2]{Ashbury_Bridgwood22}, we have that for every \(\varepsilon\in(0,1/6)\), there exist \(k_0=k_0(\varepsilon)\), \(\chi=\chi(\varepsilon)>0\), and \(C=C(\varepsilon)>0\) such that for all \(k\ge k_0\),
\[
\mathbb P\!\left(
\left|a_{\beta,k}+\left(\frac{3\pi k}{2}\right)^{2/3}\right|
\ge Ck^{\frac16+\varepsilon}
\right)
\le e^{-k^\chi}.
\]
Then the Borel--Cantelli lemma yields the claim.
\end{proof}

\subsubsection{Proof of  Proposition~\ref{prop:beta_airy_green}} % \label{subsubsec:proof_G_beta}

Analogous to the proof of \cite[Theorem 8.1]{Bykhovskaya-Gorin-Sodin25}, we define, like the decomposition in \cite[Equations (8.19), (8.20) and (8.24)]{Bykhovskaya-Gorin-Sodin25},
\begin{align}
  \mathcal G_1^{(\beta),*}(w) &:= \lim_{x\to-\infty} \Biggl[ \sum_{j:\,a_{\beta,j}>x} b_j(w) \Biggr], \quad \text{where} \quad b_j(w):= \frac{i-w}{(w-a_{\beta,j})(i-a_{\beta,j})}, \label{eq:G_beta1_firsta} \\
\mathcal G_1^{(\beta),**, \Im} &:= -\lim_{x\to-\infty} \sum_{j:\,a_{\beta,j}>x}\frac{1}{1+a_{\beta,j}^2}, \label{eq:G_beta1_seconda} \\
\mathcal G_1^{(\beta),**, \Re} &:= \lim_{x\to-\infty} \Biggl[ \sum_{j:\,a_{\beta,j}>x}\frac{-a_{\beta,j}}{1+a_{\beta,j}^2} -\frac{2}{\pi}\sqrt{-x} \Biggr], \label{eq:G_beta1_first} \\
  \mathcal G_2^{(\beta),*}(w) &:= \lim_{x\to-\infty} \Biggl[ \sum_{j:\,a_{\beta,j}>x} \eta_j b_j(w) \Biggr], \quad \text{where} \quad \eta_j:=\beta^{-1}\xi_{\beta,j}^2-1,  \label{eq:G_beta2_first} \\
\mathcal G_2^{(\beta),**, \Im} &:= -\lim_{x\to-\infty} \sum_{j:\,a_{\beta,j}>x} \frac{\eta_j}{1+a_{\beta,j}^2}, \label{eq:Gbeta2_seconda} \\
\mathcal G_2^{(\beta),**, \Re} &:= \lim_{x\to-\infty} \sum_{j:\,a_{\beta,j}>x} \eta_j \frac{-a_{\beta,j}}{1+a_{\beta,j}^2}. \label{eq:G_beta2_firstc}
\end{align}
We also set
\[
\mathcal G_{\star}^{(\beta)}(w) := \mathcal G_{\star}^{(\beta),*}(w) + i\mathcal G_{\star}^{(\beta),**, \Im} + \mathcal G_{\star}^{(\beta),**, \Re}, \quad \star = 1 \text{ or } 2.
\]
Then \( \mathcal G^{(\beta)}(w) = \mathcal G_1^{(\beta)}(w) + \mathcal G_2^{(\beta)}(w) \).

The $\mathcal G_1^{(\beta)}(w)$ part is handled like the $\mathcal G_1(w)$ term defined in \cite[Equation (8.20)]{Bykhovskaya-Gorin-Sodin25}, with \cite[Lemma 8.9]{Bykhovskaya-Gorin-Sodin25} replaced by Lemma~\ref{lem:airy-bound} in our case. Indeed, by the estimate of $a_{\beta, j}$ in \eqref{eq:est_ap_k}, we have that for \( w  \) in a compact subset of \( \mathbb C\setminus\{a_{\beta,j}:j\ge1\}\), uniformly
\begin{equation*} % \label{eq:est_bj(w)}
  \left| b_j(w) \right| = \bigO(j^{-4/3})
\end{equation*}
as $j \to \infty$, and similarly \((1+a_{\beta,j}^2)^{-1}= \bigO(j^{-4/3})\) as $j \to \infty$. Therefore, \(\mathcal G_1^{(\beta),*}(w)\) is well defined and analytic on \(\mathbb C\setminus\{a_{\beta,j}:j\ge1\}\), and the convergence of \eqref{eq:G_beta1_firsta} is uniform in a compact subset of \( \mathbb C\setminus\{a_{\beta,j}:j\ge1\}\). Similarly, the right-hand side of \eqref{eq:G_beta1_seconda} converges absolutely, and the convergence of the right-hand side of \eqref{eq:G_beta1_first} is obtained by the  approximating $a_{\beta, j}$ by \(-(3\pi j/2)^{2/3}\) as in \cite[Equations (8.21) and (8.22)]{Bykhovskaya-Gorin-Sodin25}. Here we note that in \cite[Equation (8.22)]{Bykhovskaya-Gorin-Sodin25}, the error of the approximation is controlled as $\mathfrak{a}_j + (3\pi j/2)^{2/3} = \bigO(j^{\epsilon})$, but it suffices to use the error bound $\bigO(j^{1/6 + \epsilon})$ as in \eqref{eq:est_ap_k}. Hence \(\mathcal G_1^{(\beta),**, \Im}\) and \(\mathcal G_1^{(\beta),**, \Re}\) are well defined. We omit the details.

The $\mathcal G_2^{(\beta)}(w)$ part is handled like the
$\mathcal G_2(w)$ term defined in
\cite[Equation (8.24)]{Bykhovskaya-Gorin-Sodin25}.
Condition on a realization of $\{a_{\beta,j}\}$ satisfying
\eqref{eq:est_ap_k} for all sufficiently large $j$. By
\eqref{eq:est_ap_k},
\[
\sum_{j=1}^{\infty}
\frac{1}{(1+a_{\beta,j}^2)^2}<\infty,
\qquad
\sum_{j=1}^{\infty}
\frac{a_{\beta,j}^2}{(1+a_{\beta,j}^2)^2}<\infty.
\]
Since the random variables $\eta_j$ are independent, centered, and
have finite variance, Kolmogorov's two-series theorem implies that
the series on the right-hand sides of
\eqref{eq:Gbeta2_seconda} and \eqref{eq:G_beta2_firstc}
converge almost surely. Hence
$\mathcal G_2^{(\beta),**, \Im}$ and
$\mathcal G_2^{(\beta),**, \Re}$ are well defined almost surely.

It remains to consider $\mathcal G_2^{(\beta),*}(w)$. Since
$\mathbb E|\eta_1|<\infty$ and
$(1+a_{\beta,j}^2)^{-1}=\bigO(j^{-4/3})$, Tonelli's theorem gives,
almost surely,
\begin{equation} \label{eq:conv_G_2Im}
  \sum_{j=1}^{\infty}
  \frac{|\eta_j|}{1+a_{\beta,j}^2}<\infty.
\end{equation}
Let
\[
K\Subset\mathbb C\setminus\{a_{\beta,j}:j\geq1\}.
\]
By the definition of $b_j(w)$ and \eqref{eq:est_ap_k}, for all
sufficiently large $j$,
\[
\sup_{w\in K}|b_j(w)|
\leq
\frac{C_K}{1+a_{\beta,j}^2}.
\]
Consequently, by \eqref{eq:conv_G_2Im}, almost surely,
\begin{equation*} % \label{eq:condition_Omega_-}
  \sum_{j=1}^{\infty}
  |\eta_j|\sup_{w\in K}|b_j(w)|<\infty.
\end{equation*}
The Weierstrass $M$-test therefore implies that the series on the
right-hand side of \eqref{eq:G_beta2_first} converges absolutely and
uniformly on $K$, almost surely. Since $K$ is arbitrary,
$\mathcal G_2^{(\beta),*}$ is well defined and analytic on
$\mathbb C\setminus\{a_{\beta,j}:j\geq1\}$.

This proves Proposition~\ref{prop:beta_airy_green}.
\begin{remark} \label{rmk:alt_G_beta}
  We remark here that based on the convergences proved above, the series
  \begin{equation*}
    \sum^{\infty}_{j = 1} \beta^{-1} \xi^2_{\beta, j} \left( \frac{1}{w - a_{\beta, j}} + \frac{a_{\beta, j}}{1 + a^2_{\beta, j}} \right)
  \end{equation*}
  converges and is equal to $\mathcal G_1^{(\beta),*}(w) + i \mathcal G_1^{(\beta),**, \Im} + \mathcal G_2^{(\beta),*}(w) + i \mathcal G_2^{(\beta),**, \Im}$. This result will be used in the proof of Proposition~\ref{enu:prop:beta_airy_green_2}.
\end{remark}

\subsubsection{Proof of Lemma~\ref{lem:beta_airy_green_infty}} for \(\mathcal G^{(\beta)}\).

The proof is similar to the proof of \cite[Proposition~8.3]{Bykhovskaya-Gorin-Sodin25}. We write, as in \cite[Equation (8.25)]{Bykhovskaya-Gorin-Sodin25},
\begin{equation*} % \label{squre w}
    \mathcal G^{(\beta)}(w)
=
\mathcal G_1^{***}(w)
+\mathcal G_2^{(\beta),***}(w)
+\mathcal G_3^{(\beta),***}(w),
\end{equation*}
where, with $\eta_j$ defined in \eqref{eq:G_beta2_first},
\begin{align*}
\mathcal G_1^{***}(w) &= \lim_{x\to-\infty} \Biggl[ \sum_{j:\,-(\frac{3\pi j}{2})^{2/3}>x} \frac{1}{w+(\frac{3\pi j}{2})^{2/3}} -\frac{2}{\pi}\sqrt{-x} \Biggr], \\
\mathcal G_2^{(\beta),***}(w) &= \sum^{\infty}_{j = 1} \frac{\eta_j}{w+(\frac{3\pi j}{2})^{2/3}}, \\
\mathcal G_3^{(\beta),***}(w) &= \sum^{\infty}_{j = 1} \beta^{-1}\xi_{\beta,j}^2\, \frac{(\frac{3\pi j}{2})^{2/3}+a_{\beta,j}}{(w-a_{\beta,j})(w+(\frac{3\pi j}{2})^{2/3})}.
\end{align*}
We note that $\mathcal G_1^{***}(w)$ is independent of $\beta$, and it is the first term on the right-hand side of \cite[Equation (8.25)]{Bykhovskaya-Gorin-Sodin25}. \cite[Equation (8.29)]{Bykhovskaya-Gorin-Sodin25} yields

\[
\mathcal G_1^{***}(w)+\sqrt w = \bigO(\lvert w \rvert^{-\frac{1}{2}})
\qquad\text{as } |w|\to\infty,\ \Re w\ge0.
\]
The estimates of $\mathcal G_2^{(\beta),***}(w)$ and $\mathcal G_3^{(\beta),***}(w)$ are similar to the estimates of the second and third terms on the right-hand side of \cite[Equation (8.25)]{Bykhovskaya-Gorin-Sodin25}. We give the key steps of the estimates below and omit some details.

We first show that \(\mathcal G_3^{(\beta),***}(w) = \bigO(\lvert w \rvert^{-1/4 + c}) \) for any $c > 0$, in the sense that there exists a random variable $A = A(c)$, such that for all $w$ with $\Re(w) \geq 0$ and $\lvert w \rvert \geq 1$, $\lvert \mathcal G_3^{(\beta),***}(w) \rvert \leq A \lvert w \rvert^{-1/4 + c}$. To see it, we apply the Borel--Cantelli lemma to  $\beta^{-1}\xi_{\beta,j}^2$ and apply Lemma \ref{lem:airy-bound} to $(3\pi j/2)^{2/3} + a_{\beta,j}$, and find that for every small \(\epsilon>0\), almost surely there exists \(C_\epsilon>0\) such that with $\mathcal{J} = \mathcal{J}(\epsilon)$ defined in Lemma \ref{lem:airy-bound},
\begin{align*}
  \beta^{-1}\xi_{\beta,j}^2 \le {}& C_{\epsilon} j^\epsilon, & \left\lvert (\frac{3\pi j}{2})^{2/3}+a_{\beta,j} \right\rvert \leq {}& j^{\frac{1}{6} + \epsilon}, & \text{if } j \ge {}& \mathcal J.
\end{align*}
Then, arguing as in
\cite[Equation~(8.27)]{Bykhovskaya-Gorin-Sodin25} and increasing
\(\mathcal J\) if necessary, we obtain, with
\(M=\lfloor |w|^{3/2}\rfloor\),
\[
\begin{aligned}
\left|\mathcal G_3^{(\beta),***}(w)\right|
\le {}&
\sum_{j=1}^{\mathcal J-1}
\beta^{-1}\xi_{\beta,j}^2
\frac{\left|(\frac{3\pi j}{2})^{2/3}+a_{\beta,j}\right|}
{\left|w-a_{\beta,j}\right|
 \left|w+(\frac{3\pi j}{2})^{2/3}\right|} \\
&+
C_\epsilon\sum_{j=\mathcal J}^{M}
\frac{j^{1/6+2\epsilon}}
{\left|w+(\frac{3\pi j}{2})^{2/3}\right|^2}
+
C_\epsilon\sum_{j=M+1}^{\infty}
\frac{j^{1/6+2\epsilon}}
{\left|w+(\frac{3\pi j}{2})^{2/3}\right|^2}.
\end{aligned}
\]
As \(|w|\to\infty\) with \(\Re w\ge0\), the first term is
\(O(|w|^{-2})\) almost surely, whereas
\[
|w|^{-2}\sum_{j\le M}j^{1/6+2\epsilon}
=O\!\left(|w|^{-1/4+3\epsilon}\right)
\]
and
\[
\sum_{j>M}j^{-7/6+2\epsilon}
=O\!\left(|w|^{-1/4+3\epsilon}\right).
\]
(We note that, in the setting of
\cite{Bykhovskaya-Gorin-Sodin25},
\cite[Equation~(8.17)]{Bykhovskaya-Gorin-Sodin25} yields
\[
\left|
\left(\frac{3\pi j}{2}\right)^{2/3}+a_j
\right|
\le j^\epsilon,
\]
without the additional factor \(j^{1/6}\) appearing in
\eqref{eq:est_ap_k}. Consequently, the second and third sums in
\cite[Equation~(8.26)]{Bykhovskaya-Gorin-Sodin25} are both bounded by
\[
O\left(
|w|^{\frac32(2\epsilon-\frac13)}
\right)
=
O\left(|w|^{-1/2+3\epsilon}\right).
\]
Thus, the left-hand side of
\cite[Equation~(8.25)]{Bykhovskaya-Gorin-Sodin25}
in fact admits the sharper bound
\(O(|w|^{-1/2+c})\) for every \(c>0\).)

For \(\mathcal G_2^{(\beta),***}(w)\), we note that \cite{Bykhovskaya-Gorin-Sodin25} proves with the help of the law of the iterated logarithm that for any independent and identically distributed random variables $\{ A_j \}^{\infty}_{j = 1}$ whose mean is $0$ and variance finite, there exists a random variable $A'$ such that for any integer $M$,
\[
\left| \sum_{j=1}^M \frac{A_j}{w + (\frac{3\pi j}{2})^{2/3}} \right| \le \frac{A'}{\lvert w \rvert +M^{2/3}} \sqrt{M\log\log(M+2)} + \sum_{j=1}^\infty \frac{A' \sqrt{j\log\log(j+2)}\,j^{-1/3}}{(\lvert w \rvert + j^{2/3})^2},
\]
and it tends to $0$ as $\lvert w \rvert \to \infty$, uniformly in \(M\). (In \cite{Bykhovskaya-Gorin-Sodin25}, this result is proved with $A_j = \xi^2_j - 1$ with $\{ \xi^2_j \}^{\infty}_{j = 1}$ are independent $\chi^2$ random variables, but the proof depends only on the zero mean and finite variance properties.) Since $\{ \eta_j \}$ satisfies the condition for $\{ A_j \}$, we find \(\mathcal G_2^{(\beta),***}(w)\) tends to $0$ as $\lvert w \rvert \to \infty$, uniformly in \(M\).

Thus the proof of Lemma \ref{lem:beta_airy_green_infty} for \(\mathcal G^{(\beta)}\) is complete.

\begin{remark}
  The proof of Lemma \ref{lem:beta_airy_green_infty} above does not extend \cite[Equation (8.3)]{Bykhovskaya-Gorin-Sodin25} to general $\beta > 0$, because our estimate of \( \mathcal G_3^{(\beta),***}(w) \) is weaker than the desired $o(\lvert w \rvert^{-1/4})$, which is the result that our Lemma \ref{lem:airy-bound} is weaker than \cite[Lemma 8.9]{Bykhovskaya-Gorin-Sodin25} for $\beta = 1$ case.
\end{remark}

\subsection{Proof of Proposition \ref{enu:prop:beta_airy_green_2}} \label{subsec:proof_enu:prop:beta_airy_green_2}

Our proof of  Proposition~\ref{enu:prop:beta_airy_green_2} follows the same strategy of the proof of \cite[Theorem 8.20]{Bykhovskaya-Gorin-Sodin25} that is based on \cite[Theorem~8.13]{Bykhovskaya-Gorin-Sodin25}.

First, we verify that $\mathcal G_N^{(\beta)}(w)$ and $\mathcal G^{(\beta)}(w)$ are all in the function space $\Omega_-$ defined in \cite[Definition 8.10]{Bykhovskaya-Gorin-Sodin25}. To this end, first by Remark \ref{rmk:alt_G_beta}, we use the notation
\begin{align*}
x_j:= {}& a_{\beta,j}, &
w_j:= {}& \beta^{-1}\xi_{\beta,j}^2, & \gamma:= {}& \mathcal G_1^{(\beta),**, \Re}+\mathcal G_2^{(\beta),**, \Re},
\end{align*}
where $\{ a_{\beta, j} \}^{\infty}_{j = 1}$ are the Airy\(_\beta\) point process defined in \eqref{airy process}, $\xi_{\beta,j}^2$ are independent $\chi^2_{\beta}$ random variables in \eqref{eq:G_beta_N}, and $\mathcal G_1^{(\beta),**, \Re}$ and $\mathcal G_2^{(\beta),**, \Re}$ are defined in \eqref{eq:G_beta1_first} and \eqref{eq:G_beta2_firstc} respectively, we have
\[
\mathcal G^{(\beta)}(w)
=
\gamma+\sum_{j=1}^\infty
w_j\left(\frac{1}{w-x_j}+\frac{x_j}{1+x_j^2}\right).
\]
Hence, \(\mathcal G^{(\beta)}\in\Omega_-\) almost surely. We note that the condition \cite[Equation (8.32)]{Bykhovskaya-Gorin-Sodin25} is due to \eqref{eq:conv_G_2Im}. On the other hand, we  set
\begin{align*}
x_{j;N}:=
\begin{cases}
N^{2/3}(\mu_j^{(\beta)}-2), & j\le N,\\
N^{2/3}(\mu_N^{(\beta)}-2)-(j-N), & j>N.
\end{cases}
&
w_{j;N}:= {}&
\begin{cases}
\beta^{-1}\xi_{\beta,N,j}^2, & j\le N,\\
0, & j>N,
\end{cases}
\end{align*}
where $\{ \mu^{(\beta)}_j \}^N_{j = 1}$ are defined in \eqref{eq:defn_mu^beta_k} and $\{ \xi^2_{\beta, N, j} \}^N_{j = 1}$ are independent $\chi^2_{\beta}$ random variables in \eqref{eq:G_beta_N}, and then let
\[
\gamma_N
:=
-\sum_{j=1}^N
w_{j; N}
\frac{x_{j;N}}{1+x_{j;N}^2}
-N^{1/3}.
\]
Then
\[
\mathcal G_N^{(\beta)}(w)
=
\gamma_N+\sum_{j=1}^\infty
w_{j;N}\left(\frac{1}{w-x_{j;N}}+\frac{x_{j;N}}{1+x_{j;N}^2}\right),
\]
and it is obvious that \(\mathcal G_N^{(\beta)} \in \Omega_-\).

Second, we verify the convergence of poles and weights that is required in \cite[Equation (8.34)]{Bykhovskaya-Gorin-Sodin25}. By Proposition~\ref{pro:convergence_rank_1}, for every fixed \(M\),
\[
\bigl(x_{1;N},\dots,x_{M;N}\bigr)
=
\bigl(
N^{2/3}(\mu_1^{(\beta)}-2),\dots,N^{2/3}(\mu_M^{(\beta)}-2)
\bigr)
\Longrightarrow
(a_{\beta,1},\dots,a_{\beta,M}).
\]
For the weights, we may realize all \(\xi_{\beta,N,j}\) and \(\xi_{\beta,j}\) on one probability space so that
\[
\xi_{\beta,N,j}=\xi_{\beta,j},
\qquad 1\le j\le N.
\]
Then \(w_{j;N}=w_j\) for every fixed \(j\) and all large \(N\).Since the weights are independent of the eigenvalues, the above
convergences hold jointly in finite-dimensional distributions.

Third, we verify the asymptotics along the imaginary axis for the limit that are required in \cite[Equation (8.35)]{Bykhovskaya-Gorin-Sodin25}. Set
\[
\phi(R):=-\frac{1+i}{\sqrt2}\sqrt R.
\]
It suffices to prove that
\begin{align}
  \mathcal G^{(\beta)}(iR)-\phi(R)\to {}& 0, && \text{almost surely as }R\to\infty, \label{eq:conv_G-phi} \\
  \lim_{R\to\infty}\limsup_{N\to\infty} \mathbb E\left| \mathcal G_N^{(\beta)}(iR)-\phi(R) \right|^2 = {}& 0. && \label{eq:conv_G_N-phi}
\end{align}
The convergence \eqref{eq:conv_G-phi} is a direct consequence of Lemma \ref{lem:beta_airy_green_infty}, and we prove \eqref{eq:conv_G_N-phi} below. We recall that the Stieltjes transforms of the empirical spectral measure of $H^{(\beta)}_N$ and the semicircle law (see \eqref{Stieltjes transform of sc}) are 
\begin{align*}
m_N(z):= {}& \frac1N\sum_{j=1}^N\frac{1}{z-\mu_j^{(\beta)}}, & m(z):= {}& \int\frac{1}{z-x}\,d\mu_{\SC}(x) = \frac{z-\sqrt{z^2-4}}{2}.
\end{align*}
Then, as $\eta \in (0, +\infty)$ approaches \(0\),
\begin{equation}\label{eq:supp_cc}
m(2+i\eta)
=
1-\frac{1+i}{\sqrt2}\sqrt\eta+o(\sqrt\eta).
\end{equation}
Moreover, by
\cite[Theorem~1.1 and Remark~1.3]{Bourgade-Mody-Pain22},
there exist constants \(C,\eta_0>0\) such that
\begin{equation}\label{eq:supp_dd}
\mathbb E\left[
  |m_N(2+i\eta)-m(2+i\eta)|^2
\right]
\le \frac{C}{N^2\eta^2},
\qquad 0<\eta\le\eta_0.
\end{equation}
Indeed, analogous to \cite[Equation (8.51)]{Bykhovskaya-Gorin-Sodin25},
\[
\mathcal G_N^{(\beta)}(iR)
=
N^{1/3}\left(m_N\!\left(2+\frac{iR}{N^{2/3}}\right)-1\right)
+
\sum_{j=1}^N
\frac{\beta^{-1}\xi_{\beta,N,j}^2-1}{iR-x_{j;N}}.
\]
Taking the expectation with respect to $\xi^2_{\beta, N, j}$ first, and using that $\{ \beta^{-1} \xi^2_{\beta, N, j} - 1\}^N_{j = 1}$ are independent and identically distributed random variables with zero mean and variance $2/\beta$ and are independent of $\{ x_{j; N} \}^N_{j = 1}$, we obtain, analogous to \cite[Equation (8.51)]{Bykhovskaya-Gorin-Sodin25} 
\begin{align*}
\mathbb E\left|
\mathcal G_N^{(\beta)}(iR)-\phi(R)
\right|^2
&\le
2\mathbb E\left|
N^{1/3}\left(
m_N\!\left(2+\frac{iR}{N^{2/3}}\right)-1
\right)-\phi(R)
\right|^2
+\frac{4}{\beta}\,
\mathbb E\sum_{j=1}^N\frac{1}{R^2+x_{j;N}^2}.
\end{align*}
Applying \eqref{eq:supp_dd} with
\(\eta=RN^{-2/3}\), and then using \eqref{eq:supp_cc}, we obtain
\[
\limsup_{N\to\infty}
\mathbb E\left|
N^{1/3}\left(
m_N\!\left(2+\frac{iR}{N^{2/3}}\right)-1
\right)-\phi(R)
\right|^2
\le \frac{C}{R^2}.
\]
Hence the first term vanishes as \(R\to\infty\). For
the second term, the resolvent identity gives
\[
\sum_{j=1}^N\frac{1}{R^2+x_{j;N}^2}
=
\frac{N^{1/3}}{2iR}
\left[
m_N\!\left(2-\frac{iR}{N^{2/3}}\right)
-
m_N\!\left(2+\frac{iR}{N^{2/3}}\right)
\right]
= -\frac{N^{1/3}}{R} \Im m_N\!\left(2+\frac{iR}{N^{2/3}} \right),
\]
Applying \eqref{eq:supp_dd} again with
\(\eta=RN^{-2/3}\), and using \eqref{eq:supp_cc}, we obtain
\[
\limsup_{N\to\infty}
\mathbb E\sum_{j=1}^N\frac{1}{R^2+x_{j;N}^2}
\le
\frac{1}{\sqrt{2R}}+\frac{C}{R^2}.
\]
which tends to zero as \(R\to\infty\).

We have thus verified all the hypotheses of \cite[Theorem~8.13]{Bykhovskaya-Gorin-Sodin25} for $\mathcal{G}^{(\beta)}_N$ and $\mathcal{G}^{(\beta)}$, and complete the proof of Proposition~\ref{enu:prop:beta_airy_green_2}.

\subsection[
  Proof of Proposition and Lemma for G-functions
]{
  Proof of Proposition~\ref{pro:multispike_airy_green}
  and Lemma~\ref{lem:beta_airy_green_infty} for
  \texorpdfstring{
    $\mathcal G_{\Theta}^{(1)}$ and $\mathcal G_{\Theta}^{(2)}$
  }{
    G(1)-Theta and G(2)-Theta
  }
}
\label{eq:proof_multiple}

In this subsection, we prove Proposition~\ref{pro:multispike_airy_green}
and the part of Lemma~\ref{lem:beta_airy_green_infty} concerning
\(\mathcal G_{\Theta}^{(1)}\) and \(\mathcal G_{\Theta}^{(2)}\).
The proofs are similar to those of
\cite[Theorem~8.1 and Proposition~8.3]
{Bykhovskaya-Gorin-Sodin25}; it suffices to replace the Airy points
there by the multivariate stochastic Airy operator edge points
\(a_{\beta,j}^{(\Theta)}\). This follows from the rigidity estimate below.

We prove the case \(\beta=1\); the case \(\beta=2\) is parallel.
For notational simplicity, we write
\begin{align*}
a_j&:=a_{1,j},&
a_j^{\Theta}&:=a_{1,j}^{\Theta},&
\mathcal G_{\Theta}&:=\mathcal G_{\Theta}^{(1)}.
\end{align*}
When \(l=0\), we use the convention
\(a_j^{\Theta}=a_j^{()}=a_j\).

\begin{lem}\label{thm:robin_airy_locations}
Let $l \geq 0$ and $\Theta = (\theta_1, \dots, \theta_l)$. For every \(\varepsilon>0\), there exists a random variable \(\mathcal J' =\mathcal J'(\varepsilon)\) such that almost surely,
\begin{equation} \label{eq:rigidity_a_j^Theta}
\left|
a_j^{\Theta}
+\left(\frac{3\pi j}{2}\right)^{2/3}
\right|
\le j^\varepsilon,
\qquad j>\mathcal J'.
\end{equation}
\end{lem}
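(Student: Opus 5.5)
We prove the estimate by reducing the rank-\(l\) Airy points to the unperturbed Airy\(_1\) points \(a_j\), for which the rigidity bound is known. We proceed by induction on \(l\). For \(l=0\) the statement is \cite[Lemma~8.9]{Bykhovskaya-Gorin-Sodin25} with the sharp exponent \(\varepsilon\) (for \(\beta=2\) it follows from \cite[Theorem~1]{Soshnikov00b}), as recalled after Lemma~\ref{lem:airy-bound}. The key structural input is an \emph{interlacing} property of the limiting point processes. At finite \(N\), \(H_N+\sum_{j\le l}d_je_je_j^\top\) is a rank-one perturbation of \(H_N+\sum_{j\le l-1}d_je_je_j^\top\). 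Since \(d_l>0\) for large \(N\), the eigenvalues satisfy \(\widehat\mu^{(l-1)}_{k}\le\widehat\mu^{(l)}_k\le\widehat\mu^{(l-1)}_{k-1}\) for every \(k\). After rescaling by \(N^{2/3}\) and passing to the joint limit, this should give almost surely
\[
a_k^{\Theta_{l-1}}\le a_k^{\Theta_l}\le a_{k-1}^{\Theta_{l-1}},\qquad k\ge1,
\]
where \(\Theta_m=(\theta_1,\dots,\theta_m)\).

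The first step is to make this joint limit rigorous. Proposition~\ref{pro:convergence_rank_r} gives only the marginal convergence for each fixed \(\Theta\). We avoid needing a joint limit of two different operators by using the structure established in Section~\ref{subsec:proof_mult_rank_GOE}. There, under the coupling of Proposition~\ref{prop:general-meromorphic}, the rank-\(l\) points are identified in law with the roots \(s_k^{(l)}\) of \(\mathcal G_{l-1}(x)=\theta_l\). By construction, \(s_k^{(l)}\in(a_k^{\Theta_{l-1}},a_{k-1}^{\Theta_{l-1}})\). Equation~\eqref{eq:rank_r_roots_equal_Airy_process} gives equality in law for every finite \(K\), hence equality in law of the whole sequences, since the laws are determined by their finite-dimensional marginals. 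The estimate \eqref{eq:rigidity_a_j^Theta} is an almost-sure property of the sequence that is measurable with respect to the sequence: it is the event that the inequality holds for all sufficiently large \(j\). It therefore suffices to prove it for \(\{s_k^{(l)}\}\), and this sequence is genuinely interlaced with \(\{a_k^{\Theta_{l-1}}\}\) on one probability space.

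The second step is the deduction itself. By the induction hypothesis, almost surely \(|a_k^{\Theta_{l-1}}+(3\pi k/2)^{2/3}|\le k^{\varepsilon}\) for \(k>\mathcal J'_{l-1}\). Interlacing then gives
\[
-\Bigl(\frac{3\pi k}{2}\Bigr)^{2/3}-k^{\varepsilon}\le s_k^{(l)}\le -\Bigl(\frac{3\pi (k-1)}{2}\Bigr)^{2/3}+(k-1)^{\varepsilon}.
\]
Since \((3\pi k/2)^{2/3}-(3\pi(k-1)/2)^{2/3}=O(k^{-1/3})\), we obtain \(|s_k^{(l)}+(3\pi k/2)^{2/3}|\le k^{\varepsilon}+O(k^{-1/3})\le k^{2\varepsilon}\) for large \(k\). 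Since \(\varepsilon\) is arbitrary, this gives \eqref{eq:rigidity_a_j^Theta}. There is a subtlety in the ordering of the argument. The induction in Section~\ref{subsec:proof_mult_rank_GOE} uses Proposition~\ref{prop:general-meromorphic}, whose proof presumably relies on the present lemma at rank \(l-1\) (to show \(\mathcal G_{l-1}\) is well defined and in \(\Omega_-\)). That dependence is only at rank \(l-1\), so the two inductions can be interleaved consistently: lemma at rank \(l-1\), then Proposition~\ref{pro:multispike_airy_green} and Proposition~\ref{prop:general-meromorphic} at rank \(l-1\), then the lemma at rank \(l\).

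I expect the main obstacle to be exactly this circularity and measurability bookkeeping. We must check that the proof of Proposition~\ref{prop:general-meromorphic} for \(\mathcal G_{l-1}\) needs rigidity only of \(\{a_j^{\Theta_{l-1}}\}\). In particular, its edge asymptotics along \(iR\), analogous to \eqref{eq:conv_G_N-phi}, should follow from the local law for a finite-rank perturbation of \(H_N\), which changes \(m_N\) by \(O(1/(N\eta))\). If this ordering turns out to be problematic, the fallback is a direct min-max argument on the operator \(\pmb{\mathcal H}_{\beta,m}\). The idea is that changing the boundary condition in one coordinate is a rank-one form perturbation, which yields the same interlacing directly at the level of the stochastic Airy operator without passing through finite \(N\).
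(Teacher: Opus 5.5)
Your argument is correct, and your ordering of the interleaved induction is consistent: rigidity at rank $l-1$ gives Proposition~\ref{pro:multispike_airy_green} and Lemma~\ref{lem:beta_airy_green_infty} for $\mathcal G_{\Theta_{l-1}}$, then Proposition~\ref{prop:general-meromorphic} and \eqref{eq:rank_r_roots_equal_Airy_process} at rank $l$, then rigidity at rank $l$. But the paper's route is much lighter, and the obstacle that pushed you toward yours does not exist.

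The paper does take a joint limit. The rescaled top-$M$ eigenvalue vectors of $H_N$ and of $H_N+\diag(d_1,\dots,d_l,0,\dots,0)$ are each tight, hence jointly tight. Any joint subsequential limit has the correct marginals, by the GOE soft-edge limit and Proposition~\ref{pro:convergence_rank_r}. The finite-$N$ interlacing is a closed condition, so it passes to that limit. As you observe yourself, rigidity depends only on the marginal law of $\{a_j^\Theta\}$. So any coupling with the right marginals suffices, and the joint law never has to be identified.

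The paper also avoids induction on the rank. Weyl interlacing for the positive rank-$l$ perturbation, $\mu_j\le\lambda_j\le\mu_{j-l}$, compares $a_j^\Theta$ directly with the unperturbed Airy$_1$ points: $a_j\le a_j^\Theta\le a_{j-l}$. The fixed shift by $l$ costs only $O(j^{-1/3})$. As a result, the lemma rests only on Proposition~\ref{pro:convergence_rank_r} and \cite[Lemma~8.9]{Bykhovskaya-Gorin-Sodin25}. It can then serve as a standalone input to Proposition~\ref{pro:multispike_airy_green}, Lemma~\ref{lem:beta_airy_green_infty} and Proposition~\ref{prop:general-meromorphic}.

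Your version makes rigidity depend on all of the following, threaded through one simultaneous induction:
\begin{itemize}
\item \cite[Theorem~8.13]{Bykhovskaya-Gorin-Sodin25};
\item the local law;
\item Lemma~\ref{lem:joint-edge-gaussian-general-short};
\item overlap tightness from Theorem~\ref{finite_spikes_GOE} at rank $l-1$;
\item simplicity of the lower-rank process.
\end{itemize}
What you gain is an explicit single-space realization of the interlacing via the Airy--Green roots, which this lemma does not need.

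Your min-max fallback is also unnecessary. As stated, it would additionally require identifying the $l$-component operator with one Dirichlet coordinate with the $(l-1)$-component operator, and min-max alone does not provide that.
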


\begin{proof}
  If \(l=0\), this is \cite[Lemma~8.9]{Bykhovskaya-Gorin-Sodin25}. We assume \(l>0\).

  We use finite-\(N\) interlacing together with joint convergence. Let \(H_N\) be the GOE matrix, let \(\mu_1\ge\cdots\ge\mu_N\) be its eigenvalues, and let \(\lambda_1\ge\cdots\ge\lambda_N\) be the eigenvalues of
  \[
    H_N+\diag(d_1,\dots,d_l,0,\dots,0),
    \qquad
    d_i=1-\theta_iN^{-1/3}.
  \]
  Since \(d_i>0\) for all sufficiently large \(N\), the rank-\(l\) interlacing inequalities give
  \[
    \mu_j\le \lambda_j\le \mu_{j-l},
    \qquad j>l,
  \]
  with the convention that \(\mu_m=+\infty\) for \(m\le0\). Equivalently, at the soft edge,
  \[
    N^{2/3}(\mu_j-2)
    \le
    N^{2/3}(\lambda_j-2)
    \le
    N^{2/3}(\mu_{j-l}-2),
    \qquad j>l.
  \]
  By the soft-edge convergence of the GOE and by Proposition~\ref{pro:convergence_rank_r}, for each fixed \(M\)
the two marginal vectors
\[
\bigl(N^{2/3}(\mu_1-2),\ldots,N^{2/3}(\mu_M-2)\bigr),
\qquad
\bigl(N^{2/3}(\lambda_1-2),\ldots,N^{2/3}(\lambda_M-2)\bigr)
\]
are tight and converge respectively to
\[
(a_1,\ldots,a_M),\qquad (a^\Theta_1,\ldots,a^\Theta_M).
\]
Hence, after passing to a joint subsequential limit, we obtain a coupling of the two limiting
point processes. Since the finite-\(N\) interlacing inequalities define a closed condition, they pass
to the limit, yielding
\[
a_j\le a^\Theta_j\le a_{j-l},\qquad l<j\le M.
\]
A diagonal argument in \(M\) gives this coupling for all \(j>l\).
  
  Applying
\cite[Lemma~8.9]{Bykhovskaya-Gorin-Sodin25} with \(\varepsilon/2\), we have almost surely, for all large \(j\),
\[
\left|a_j+\left(\frac{3\pi j}{2}\right)^{2/3}\right|
+
\left|a_{j-l}+\left(\frac{3\pi(j-l)}{2}\right)^{2/3}\right|
\le 2j^{\varepsilon/2}.
\]
Since \(l\) is fixed,
\[
\left(\frac{3\pi(j-l)}2\right)^{2/3}
=
\left(\frac{3\pi j}2\right)^{2/3}+O(j^{-1/3}).
\]
Together with
\[
a_j\le a^\Theta_j\le a_{j-l},
\]
this gives
\[
\left|a^\Theta_j+\left(\frac{3\pi j}{2}\right)^{2/3}\right|
\le j^\varepsilon
\]
for all sufficiently large \(j\), after increasing the random threshold.
\end{proof}
\begin{proof}[Proof of Proposition~\ref{pro:multispike_airy_green}]
  The proof is the same as the proof of \cite[Theorem~8.1]{Bykhovskaya-Gorin-Sodin25}. Indeed, in that proof the only input on the pole locations is the rigidity estimate for the Airy points, and the only input on the weights is that, after subtracting one, they are independent centered random variables with finite variance. By Lemma~\ref{thm:robin_airy_locations}, the same rigidity estimate holds for \(a_{j}^{\Theta}\). Therefore the argument in \cite[Theorem~8.1]{Bykhovskaya-Gorin-Sodin25} applies verbatim after replacing \(a_j\) by \(a_{j}^{\Theta}\). This proves the existence of the limit in \eqref{eq:defn_Airy_Green_multiple}, uniformly on compact subsets avoiding the poles.
\end{proof}
\begin{proof}[Proof of Lemma~\ref{lem:beta_airy_green_infty}
for \(\mathcal G_{\Theta}^{(\beta)}\), \(\beta\in\{1,2\}\)]
We follow the proof of
\cite[Proposition~8.3]{Bykhovskaya-Gorin-Sodin25}.
By Lemma \ref{thm:robin_airy_locations} for \(\beta=1\), and by its
parallel GUE version for \(\beta=2\), for every \(\varepsilon>0\),
almost surely,
\[
a_{\beta,j}^{\Theta}
=
-\left(\frac{3\pi j}{2}\right)^{2/3}
+O(j^\varepsilon),
\qquad j\to\infty.
\]
Moreover, the exponential tail of the chi-square distribution and
the Borel--Cantelli lemma give
\[
\beta^{-1}\xi_{\beta,j}^{2}=O(j^\varepsilon)
\qquad\text{almost surely}.
\]

Using
\[
\frac{\beta^{-1}\xi_{\beta,j}^{2}}
{w-a_{\beta,j}^{\Theta}}
=
\frac1{w+\left(\frac{3\pi j}{2}\right)^{2/3}}
+
\frac{\beta^{-1}\xi_{\beta,j}^{2}-1}
{w+\left(\frac{3\pi j}{2}\right)^{2/3}}
+
\beta^{-1}\xi_{\beta,j}^{2}
\frac{
\left(\frac{3\pi j}{2}\right)^{2/3}
+a_{\beta,j}^{\Theta}
}{
\bigl(w-a_{\beta,j}^{\Theta}\bigr)
\left(w+\left(\frac{3\pi j}{2}\right)^{2/3}\right)
},
\]
we obtain, as in
\cite[Equation~(8.25)]{Bykhovskaya-Gorin-Sodin25},
the decomposition
\begin{align}
\mathcal G_{\Theta}^{(\beta)}(w)
={}&
\lim_{y\to-\infty}
\left[
\sum_{\left(\frac{3\pi j}{2}\right)^{2/3}<-y}
\frac1{w+\left(\frac{3\pi j}{2}\right)^{2/3}}
-\frac2\pi\sqrt{-y}
\right]                                                    \notag\\
&+
\sum_{j=1}^{\infty}
\frac{\beta^{-1}\xi_{\beta,j}^{2}-1}
{w+\left(\frac{3\pi j}{2}\right)^{2/3}}                    \notag\\
&+
\sum_{j=1}^{\infty}
\beta^{-1}\xi_{\beta,j}^{2}
\frac{
\left(\frac{3\pi j}{2}\right)^{2/3}
+a_{\beta,j}^{\Theta}
}{
\bigl(w-a_{\beta,j}^{\Theta}\bigr)
\left(w+\left(\frac{3\pi j}{2}\right)^{2/3}\right)
}.
\label{eq:multispike-Gorin-decomposition}
\end{align}
Here the rigidity estimate above plays the same role as
\cite[Lemma~8.9]{Bykhovskaya-Gorin-Sodin25} in replacing the cutoff
\(a_{\beta,j}^{\Theta}>y\) by
\(\left(\frac{3\pi j}{2}\right)^{2/3}<-y\).

As in \cite[Equations~(8.28)--(8.29)]
{Bykhovskaya-Gorin-Sodin25}, comparison with the corresponding
integral gives, uniformly for \(\Re w\ge0\),
\[
\lim_{y\to-\infty}
\left[
\sum_{\left(\frac{3\pi j}{2}\right)^{2/3}<-y}
\frac1{w+\left(\frac{3\pi j}{2}\right)^{2/3}}
-\frac2\pi\sqrt{-y}
\right]
=
-\sqrt w+O(|w|^{-1/2}).
\]

Next, the law of the iterated logarithm gives
\[
\sum_{k=1}^{j}
\bigl(\beta^{-1}\xi_{\beta,k}^{2}-1\bigr)
=
O\bigl(\sqrt{j\log\log(j+2)}\bigr)
\qquad\text{almost surely}.
\]
The second term in
\eqref{eq:multispike-Gorin-decomposition} is treated by the
summation-by-parts argument used for the second sum in
\cite[Equation~(8.25), in the paragraph between Equations~(8.27)
and~(8.28)]{Bykhovskaya-Gorin-Sodin25}. It follows that
\[
\sum_{j=1}^{\infty}
\frac{\beta^{-1}\xi_{\beta,j}^{2}-1}
{w+\left(\frac{3\pi j}{2}\right)^{2/3}}
\longrightarrow0
\]
almost surely, uniformly as \(|w|\to\infty\) with \(\Re w\ge0\).

For the last term in
\eqref{eq:multispike-Gorin-decomposition}, we use the estimate
\cite[Equation~(8.26)]{Bykhovskaya-Gorin-Sodin25} and the
three-range decomposition in
\cite[Equation~(8.27)]{Bykhovskaya-Gorin-Sodin25}, splitting the
sum at \(j=\lfloor |w|^{3/2}\rfloor\). For
\(0<\varepsilon<1/12\), the rigidity estimate and the chi-square
tail bound imply
\begin{align*}
  \left|
\sum_{j=1}^{\infty}
\beta^{-1}\xi_{\beta,j}^{2}
\frac{
\left(\frac{3\pi j}{2}\right)^{2/3}
+a_{\beta,j}^{\Theta}
}{
\bigl(w-a_{\beta,j}^{\Theta}\bigr)
\left(w+\left(\frac{3\pi j}{2}\right)^{2/3}\right)
}
\right| %\\
%&\qquad
  \le {}&
O(|w|^{-2})
+C|w|^{-2}\sum_{j\le |w|^{3/2}}j^{2\varepsilon}
+C\sum_{j>|w|^{3/2}}j^{-4/3+2\varepsilon}\\
= {}&
O(|w|^{-1/2+3\varepsilon})
=o(|w|^{-1/4}).
\end{align*}
The same calculation after \(m\) differentiations gives
\[
\frac{d^m}{dw^m}
\sum_{j=1}^{\infty}
\beta^{-1}\xi_{\beta,j}^{2}
\frac{
\left(\frac{3\pi j}{2}\right)^{2/3}
+a_{\beta,j}^{\Theta}
}{
\bigl(w-a_{\beta,j}^{\Theta}\bigr)
\left(w+\left(\frac{3\pi j}{2}\right)^{2/3}\right)
}
=
O(|w|^{-m-1/2+3\varepsilon}),
\qquad m=1,2.
\]

We now prove the joint fluctuation statement directly from
\eqref{eq:multispike-Gorin-decomposition}. Termwise
differentiation is justified by local uniform convergence away from
the poles. The estimates above and their differentiated versions give
\begin{align*}
%&
\begin{pmatrix}
x^{1/4}\bigl(\mathcal G_{\Theta}^{(\beta)}(x)+\sqrt x\bigr)\\[0.4em]
x^{5/4}\left(
\bigl(\mathcal G_{\Theta}^{(\beta)}\bigr)'(x)
+\dfrac1{2\sqrt x}
\right)
\end{pmatrix} %\\
 %&\qquad
   =
\sum_{j=1}^{\infty}
\bigl(\beta^{-1}\xi_{\beta,j}^{2}-1\bigr)
\begin{pmatrix}
\dfrac{x^{1/4}}
{x+\left(\frac{3\pi j}{2}\right)^{2/3}}\\[1em]
-\dfrac{x^{5/4}}
{\left(x+\left(\frac{3\pi j}{2}\right)^{2/3}\right)^2}
\end{pmatrix}
+o_{\mathbb P}(1).
\end{align*}
For every \(p>3/2\), the same sum--integral comparison as in
\cite[Equation~(8.30)]{Bykhovskaya-Gorin-Sodin25} gives
\[
x^{p-3/2}
\sum_{j=1}^{\infty}
\frac1{
\left(x+\left(\frac{3\pi j}{2}\right)^{2/3}\right)^p
}
\longrightarrow
\frac1\pi\int_0^\infty
\frac{u^{1/2}}{(1+u)^p}\,du.
\]
The case \(p=2\) is precisely the variance calculation in
\cite[Equation~(8.30)]{Bykhovskaya-Gorin-Sodin25}; the cases
\(p=3,4\) give the covariance and the variance of the derivative
component. Since
\[
\operatorname{Var}
\bigl(\beta^{-1}\xi_{\beta,j}^{2}-1\bigr)
=\frac2\beta,
\]
the covariance matrix of the centered sum converges to
\[
\frac1\beta
\begin{pmatrix}
1&-\frac14\\[0.2em]
-\frac14&\frac18
\end{pmatrix}.
\]
The largest coefficient in the triangular array is
\(O(x^{-3/4})\), while
\[
\mathbb E
\left|
\beta^{-1}\xi_{\beta,j}^{2}-1
\right|^3<\infty.
\]
Thus the Lyapunov condition and the Cramér--Wold device give
\[
\begin{pmatrix}
x^{1/4}\bigl(\mathcal G_{\Theta}^{(\beta)}(x)+\sqrt x\bigr)\\[0.4em]
x^{5/4}\left(
\bigl(\mathcal G_{\Theta}^{(\beta)}\bigr)'(x)
+\dfrac1{2\sqrt x}
\right)
\end{pmatrix}
\Longrightarrow
\mathcal N\left(
\begin{pmatrix}0\\0\end{pmatrix},
\frac1\beta
\begin{pmatrix}
1&-\frac14\\[0.2em]
-\frac14&\frac18
\end{pmatrix}
\right).
\]

Since the second component is tight,
\[
x^{1/4}
\left[
-\frac1{
\bigl(\mathcal G_{\Theta}^{(\beta)}\bigr)'(x)}
-2\sqrt x
\right]
=
4x^{5/4}
\left(
\bigl(\mathcal G_{\Theta}^{(\beta)}\bigr)'(x)
+\frac1{2\sqrt x}
\right)
+o_{\mathbb P}(1).
\]
It follows that
\[
\begin{pmatrix}
x^{1/4}\bigl(\mathcal G_{\Theta}^{(\beta)}(x)+\sqrt x\bigr)\\[0.4em]
x^{1/4}\left[
-\dfrac1{
\bigl(\mathcal G_{\Theta}^{(\beta)}\bigr)'(x)}
-2\sqrt x
\right]
\end{pmatrix}
\Longrightarrow
\mathcal N\left(
\begin{pmatrix}0\\0\end{pmatrix},
\frac1\beta
\begin{pmatrix}
1&-1\\
-1&2
\end{pmatrix}
\right).
\]

Finally, the three estimates following
\eqref{eq:multispike-Gorin-decomposition} imply that, almost surely,
\[
\mathcal G_{\Theta}^{(\beta)}(w)+\sqrt w\longrightarrow0
\]
uniformly as \(|w|\to\infty\), \(\Re w\ge0\). Applying Cauchy's
formula to
\(\mathcal G_{\Theta}^{(\beta)}(w)+\sqrt w\) on
\(|w-x|=x/2\) gives
\[
\bigl(\mathcal G_{\Theta}^{(\beta)}\bigr)'(x)
=
-\frac1{2\sqrt x}+o(x^{-1}),
\qquad
\bigl(\mathcal G_{\Theta}^{(\beta)}\bigr)''(x)
=
\frac1{4x^{3/2}}+o(x^{-2}),
\]
which completes the proof.
\end{proof}
\subsection{Proof of Proposition \ref{prop:general-meromorphic}} \label{subsec:proof_prop:general-meromorphic}

The proof is based on \cite[Theorem~8.13]{Bykhovskaya-Gorin-Sodin25}, the same as the proof of Proposition~\ref{enu:prop:beta_airy_green_2} and the proof of \cite[Theorem 8.20]{Bykhovskaya-Gorin-Sodin25}. We need the following lemma whose proof will be given in Section \ref{sec:linear_algebra}.

\begin{lem}[Spherical second and fourth moments in a subspace]
  \label{lem:spherical-moments-subspace}
  Let \(\mathbb F\in\{\mathbb R,\mathbb C\}\), and put
  \[
    \beta_{\mathbb F}:=\dim_{\mathbb R}\mathbb F
    =
    \begin{cases}
      1, & \mathbb F=\mathbb R,\\
      2, & \mathbb F=\mathbb C.
    \end{cases}
  \]
  Equip \(\mathbb F^N\) with the standard Euclidean/Hermitian inner product. Let \(V\subset \mathbb F^N\) be an \(m\)-dimensional \(\mathbb F\)-linear subspace, and let \(P\) be the matrix representing the orthogonal projection onto \(V\). If \(v\) is a random unit vector in \(V\) uniformly distributed on the unit sphere of \(V\), then, with $v^\ast$ understood as $v^{\top}$ if $\mathbb F = \realR$,
  \begin{equation}\label{eq:vvtop}
    \mathbb E[vv^\ast]=\frac1m P.
  \end{equation}
  Moreover, for any \(N\times N\) matrices \(A\) and \(B\) that are real symmetric if \(\mathbb F = \realR\) and are Hermitian if \(\mathbb F = \compC\),
  \begin{equation}\label{eq:trace_identity}
    \mathbb E\bigl[(v^\ast A v)(v^\ast B v)\bigr]
    =
    \frac{
      \tr(PAP)\tr(PBP)+\frac{2}{\beta_{\mathbb F}}\tr(PAPBP)
    }{
      m\bigl(m+\frac{2}{\beta_{\mathbb F}}\bigr)
    }.
  \end{equation}
\end{lem}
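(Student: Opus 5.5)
The plan is to reduce both identities to moments of a Gaussian vector. Choose an orthonormal basis \(q_1,\dots,q_m\) of \(V\) and let \(Q=(q_1,\dots,q_m)\in\mathbb F^{N\times m}\), so that \(Q^\ast Q=I_m\) and \(P=QQ^\ast\). A uniform unit vector on the sphere of \(V\) can then be written as \(v=Qg/\|g\|\). Here \(g\) is a standard Gaussian vector in \(\mathbb F^m\): real \(\mathcal N(0,I_m)\) when \(\beta_{\mathbb F}=1\), and complex Gaussian with \(\mathbb E gg^\ast=I_m\), \(\mathbb E gg^\top=0\) when \(\beta_{\mathbb F}=2\). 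The direction \(g/\|g\|\) is independent of \(\|g\|\), and this is the fact the whole computation rests on.

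For \eqref{eq:vvtop}, unitary invariance gives \(\mathbb E[uu^\ast]=cI_m\) for \(u=g/\|g\|\). Taking the trace and using \(\|u\|=1\) gives \(c=1/m\). Hence
\[
\mathbb E[vv^\ast]=Q\,\mathbb E[uu^\ast]\,Q^\ast=\tfrac1m QQ^\ast=\tfrac1mP.
\]

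For \eqref{eq:trace_identity}, set \(\tilde A=Q^\ast AQ\) and \(\tilde B=Q^\ast BQ\), both self-adjoint \(m\times m\) matrices. Then \(v^\ast Av=u^\ast\tilde Au\), \(\tr\tilde A=\tr(PAP)\) and \(\tr(\tilde A\tilde B)=\tr(PAPBP)\), using \(P^2=P\) and cyclicity. Independence of \(u\) and \(\|g\|\) gives
\[
\mathbb E[(g^\ast\tilde Ag)(g^\ast\tilde Bg)]=\mathbb E\|g\|^4\;\mathbb E[(u^\ast\tilde Au)(u^\ast\tilde Bu)].
\]
We then compute both Gaussian quantities.

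\begin{itemize}
\item By Wick/Isserlis, in the real case \(\mathbb E[(g^\top\tilde Ag)(g^\top\tilde Bg)]=\tr\tilde A\tr\tilde B+2\tr(\tilde A\tilde B)\). In the complex case it is \(\tr\tilde A\tr\tilde B+\tr(\tilde A\tilde B)\), since only one pairing survives. In both cases this equals \(\tr\tilde A\tr\tilde B+\frac{2}{\beta_{\mathbb F}}\tr(\tilde A\tilde B)\).
\item Taking \(\tilde A=\tilde B=I_m\) in the same formula gives \(\mathbb E\|g\|^4=m^2+\frac{2}{\beta_{\mathbb F}}m=m\bigl(m+\frac2{\beta_{\mathbb F}}\bigr)\). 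Equivalently, \(\|g\|^2\) is \(\chi^2_m\) in the real case and \(\frac12\chi^2_{2m}\) in the complex case.
\end{itemize}

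Dividing the first quantity by the second yields \eqref{eq:trace_identity}.

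The only delicate step is the Wick computation in the complex case. There one must use \(\mathbb E g_ig_j=0\), so that the pairing producing \(\tr(\tilde A\tilde B^\top)\) disappears. In the real case symmetry gives \(\tr(\tilde A\tilde B^\top)=\tr(\tilde A\tilde B)\), so the two surviving pairings add to \(2\tr(\tilde A\tilde B)\). Checking these pairings carefully is the main point; everything else is bookkeeping.
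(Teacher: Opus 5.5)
Your proof is correct and follows essentially the paper's route: both reduce to $V=\Span\{e_1,\dots,e_m\}$ (your isometry $Q$ does the same) and write $v$ as a normalized Gaussian vector. The only difference is in the fourth moment: the paper quotes the entrywise spherical fourth-moment identities and contracts them with $PAP$ and $PBP$. You instead derive the contracted formula from Wick's theorem, the independence of $g/\|g\|$ and $\|g\|$, and $\mathbb E\|g\|^4=m(m+\tfrac{2}{\beta_{\mathbb F}})$, which is how those identities are usually proved, so your version is simply more self-contained.
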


\begin{proof}[Proof of Proposition \ref{prop:general-meromorphic}]
  For notational simplicity, we adapt the short-handed notation as in \eqref{eq:notation_mult_GOE} in Section \ref{subsec:proof_mult_rank_GOE}. We also make use of notation in Section \ref{eq:proof_multiple}.
  
First, we verify that $\widetilde{\mathcal{G}}_{N, r}(w)$ and $\mathcal G_{(r - 1), r^c}(w)$ are all in the function space $\Omega_-$ defined in \cite[Definition 8.10]{Bykhovskaya-Gorin-Sodin25}. To this end, we denote
\begin{align*}
x_j:= {}& a_j^{\Theta^{(r)}}, & w_j:= {}& \xi_j^2, & \gamma = \mathcal{G}^{**, \Re}_{(r - 1), r^c, 1} + \mathcal{G}^{**, \Re}_{(r - 1), r^c, 2},
\end{align*}
where \((\xi_j)_{j\ge1}\) are i.i.d.\ standard Gaussian random variables, independent of the limiting edge eigenvalue process, and
\begin{align*}
  \mathcal{G}^{**, \Re}_{(r - 1), r^c, 1} = {}& \lim_{x\to-\infty} \Biggl[ \sum_{j:\,x_j>x}\frac{-x_j}{1+x_j^2} -\frac{2}{\pi}\sqrt{-x} \Biggr], & \mathcal{G}^{**, \Re}_{(r - 1), r^c, 2} = {}& \lim_{x\to-\infty} \sum_{j:\,x_j>x} (\xi^2_j - 1) \frac{-x_j}{1+x_j^2}.
\end{align*}
Since $a_j^{\Theta^{(r)}}$ satisfies inequality \eqref{eq:rigidity_a_j^Theta} by Lemma~\ref{thm:robin_airy_locations}, which is analogous to the inequality \eqref{eq:est_ap_k} satisfied by $a_{\beta, j}$, by arguments in the proof of  Proposition~\ref{prop:beta_airy_green}, especially the well-definedness of $\mathcal G_1^{(\beta),**, \Re}$ and $\mathcal G_2^{(\beta),**, \Re}$, we have that $\mathcal{G}^{**, \Re}_{(r - 1), r^c, 1}$ and $\mathcal{G}^{**, \Re}_{(r - 1), r^c, 2}$ are well-defined random variables. As in the corresponding \(\Omega_-\) representation for \(\mathcal G^{(\beta)}\), we have
\begin{equation*} % \label{eq:Omega-minus-representation-G-r}
  \mathcal G_{(r - 1), r^c}(w)
  =
  \gamma+\sum_{j=1}^\infty
  w_j\left(\frac{1}{w-x_j}+\frac{x_j}{1+x_j^2}\right).
\end{equation*}
Thus \(\mathcal G_{(r-1),r^c}\in\Omega_-\) almost surely. On the other hand, we  set
\begin{align*}
x_{j;N}:=
\begin{cases}
a_{j,N}, & j\le N,\\
a_{N,N}-(j-N), & j>N.
\end{cases}
&
w_{j;N}:= {}&
\begin{cases}
N|\langle e_r,\widehat{\u}_{j,N}\rangle|^2, & j\le N,\\
0, & j>N,
\end{cases}
\end{align*}
where $\{ a_{j,N} \}^N_{j = 1}$ and $\{ N|\langle e_r,\widehat{\u}_{j,N}\rangle|^2 \}^N_{j = 1}$ are defined in \eqref{eq:defn_Gtilde_N_r}, and then let
\[
\gamma_N
:=
-\sum_{j=1}^N
w_{j; N}
\frac{x_{j;N}}{1+x_{j;N}^2}
-N^{1/3}.
\]
Then
\[
\widetilde{\mathcal G}_{N,r}(w)
=
\gamma_N+\sum_{j=1}^\infty
w_{j;N}\left(\frac{1}{w-x_{j;N}}+\frac{x_{j;N}}{1+x_{j;N}^2}\right),
\]
and it is obvious that \(\widetilde{\mathcal G}_{N,r} \in \Omega_-\).

Second, we verify the convergence of poles and weights that is required in \cite[Equation (8.34)]{Bykhovskaya-Gorin-Sodin25}. This follows from Lemma~\ref{lem:joint-edge-gaussian-general-short}.

Set
\begin{equation}\label{eq:def_phi_R}
\phi(R):=-\frac{1+i}{\sqrt2}\sqrt R .
\end{equation}

Third, we verify the asymptotics along the imaginary axis for the limit that are required in \cite[Equation (8.35)]{Bykhovskaya-Gorin-Sodin25}. With \(\phi(R)\) defined in \eqref{eq:def_phi_R}, by Lemma~\ref{lem:beta_airy_green_infty} 
\[
\mathcal G_{(r-1),r^c}(iR)-\phi(R)\to0
\qquad\text{almost surely as }R\to\infty.
\]
It remains to show that
\begin{equation} \label{eq:mult_GOE_third}
\lim_{R\to\infty}\limsup_{N\to\infty}
\mathbb E\left|
\widetilde{\mathcal G}_{N,r}(iR)-\phi(R)
\right|^2
=0.
\end{equation}

For the GOE matrix \(H_N\), set
\[
m_N(z):=\frac1N\sum_{j=1}^N\frac{1}{z-\mu_j},
\qquad
m(z):=G_{\SC}(z)=\frac{z-\sqrt{z^2-4}}{2}.
\]
As \(\eta\downarrow0\),
\begin{equation}\label{eq:supp_cc1}
m(2+i\eta)
=
1-\frac{1+i}{\sqrt2}\sqrt\eta+o(\sqrt\eta).
\end{equation}
Moreover, by the local \(L^2\) estimate for the Gaussian \(\beta\)-ensemble \cite{Bourgade-Mody-Pain22}, for every \(\varepsilon>0\), there is a constant \(C=C(\varepsilon)>0\) such that
\begin{equation}\label{eq:supp_dd1}
\mathbb E\!\left[
|m_N(2+i\eta)-m(2+i\eta)|^2
\right]
\le \frac{C}{N^2\eta^2},
\qquad
N^{-2/3}<\eta<N^{-2/3+\varepsilon}.
\end{equation} 
We define
\begin{equation}\label{eq:defn_mhat}
\widehat{m}_N(z)
=
\frac1N \sum_{j=1}^N \frac{1}{z-\widehat{\mu}_{j,N}} .
\end{equation}
where $\{\widehat{\mu}_{j,N}: 1 \le j \le N\}$ are the eigenvalues of
\begin{equation}\label{eq:hat_H_N}
\widehat H_N
:=
H_N+\sum_{j=1}^{r-1} d_j e_j e_j^\top .
\end{equation}
By the classical interlacing property for eigenvalues under a rank-$1$ (or finite-rank) perturbation, we have
\[
\widehat{\mu}_{j,N}  \ge \mu_j \ge \widehat{\mu}_{j + r - 1,N}, \qquad 1 \le j \le N - r + 1,
\]
where $\{\mu_{j}\}_{j=1}^N$ are the eigenvalues of $H_N^{}$. 
By the rank inequality for Stieltjes transforms, for every
\(\eta>0\),
\[
\left|
\widehat m_N(2+i\eta)-m_N(2+i\eta)
\right|
\le \frac{C_r}{N\eta}.
\]
Combining this bound with \eqref{eq:supp_dd1}, we obtain
\begin{equation}\label{eq:est_mhat}
\mathbb E\left[
\left|
\widehat m_N(2+i\eta)-m(2+i\eta)
\right|^2
\right]
\le
\frac{C_r}{N^2\eta^2},
\qquad
N^{-2/3}<\eta<N^{-2/3+\varepsilon}.
\end{equation}
We define the vectors 
\[
\alpha_{j,N}
:=
\bigl(
\langle e_j,\widehat{\u}_{1,N}\rangle,\dots,
\langle e_j,\widehat{\u}_{N,N}\rangle
\bigr)^{\top} \in\mathbb R^N, \quad j = 1, \dotsc, N,
\]
and set
\[
\mathcal F_{r-1,N}
:=
\sigma\!\left(
\widehat\mu_{1,N},\dots,\widehat\mu_{N,N},
\alpha_{1,N},\dots,\alpha_{r-1,N}
\right),
\]
where \(\widehat\mu_{1,N},\dots,\widehat\mu_{N,N}\) are the eigenvalues of \(\widehat H_N\) defined in \eqref{eq:hat_H_N}, and \(\alpha_{j,N}\) is defined above. Since the vectors $\alpha_{j, N}$ are orthonormal, the operator $P_N$ defined by the matrix
\[
P_{r - 1, N}:=I-\sum_{j=1}^{r-1}\alpha_{j,N}\alpha_{j,N}^\top
\]
is the orthogonal projection onto
\[
\mathcal V_{r - 1, N}
:=
\Bigl(\Span\{\alpha_{1,N},\dots,\alpha_{r-1,N}\}\Bigr)^\perp.
\]
By the residual orthogonal-invariance argument in the proof of
Lemma~\ref{lem:joint-edge-gaussian-general-short}, conditionally on
\(\mathcal F_{r-1,N}\), the vector \(\alpha_{r,N}\), modulo the
irrelevant signs of the eigenvectors, is uniformly distributed on
the unit sphere of \(\mathcal V_{r-1,N}\), whose dimension is
\(N-r+1\).

Fix \(R>0\), and define 
\[
d_{j,N}(R):=iR-a_{j,N},
\qquad
D_N(R):=\diag\!\left(\frac1{d_{1,N}(R)},\dots,\frac1{d_{N,N}(R)}\right).
\]
where \(a_{j,N}:=N^{2/3}(\widehat\mu_{j,N}-2)\), as in
\eqref{eq:defn_Gtilde_N_r}. Then
\begin{equation} \label{eq:G(iR)_in_v}
\widetilde{\mathcal G}_{N,r}(iR)+N^{1/3}
=
N\, \alpha_{r, N}^\top D_N(R) \alpha_{r, N}.
\end{equation}

Since \(D_N(R)\) is diagonal, the signs of the eigenvectors do not
affect the quadratic form below. Hence, by \eqref{eq:vvtop} in
Lemma~\ref{lem:spherical-moments-subspace}, conditionally on
\(\mathcal F_{r-1,N}\),
\[
\mathbb E\!\left[
\alpha_{r,N}^\top D_N(R)\alpha_{r,N}
\mid\mathcal F_{r-1,N}
\right]
=
\frac{1}{N-r+1}
\tr\bigl(P_{r-1,N}D_N(R)\bigr).
\]
Therefore, \eqref{eq:G(iR)_in_v} implies
\[
\mathbb E\!\left[
\widetilde{\mathcal G}_{N,r}(iR)
\mid\mathcal F_{r-1,N}
\right]
=
\frac{N}{N-r+1}
\tr\bigl(P_{r-1,N}D_N(R)\bigr)-N^{1/3}.
\]
We denote
\[
A_N(R):= \tr D_N(R)-N^{1/3}.
\]
Since
\[
\tr(P_{r - 1, N} D_N(R))
=
\tr D_N(R)-\tr\sum_{j=1}^{r-1}\alpha_{j,N}^\top D_N(R)\alpha_{j,N},
\]
and \(|d_{m,N}(R)|\ge R\), we get
\[
|E[\widetilde{\mathcal G}_{N,r}(iR) \mid \mathcal{F}_{r - 1, N}] -A_N(R)|
\le
\frac{r}{N}|\tr D_N(R)|+\frac{r}{R}
\]
for all large \(N\). On the other hand,
\[
A_N(R)
=
N^{1/3} \widehat{m}_N\!\left(2+\frac{iR}{N^{2/3}}\right)-N^{1/3},
\]
where \( \widehat{m}_N\) is defined in \eqref{eq:defn_mhat}. Using the estimate \eqref{eq:est_mhat} and \eqref{eq:supp_cc1}, and arguing as
in the proof of \cite[Theorem~8.20]{Bykhovskaya-Gorin-Sodin25}, we obtain
\begin{equation} \label{eq:mult_GOE_3_1}
\lim_{R\to\infty}\limsup_{N\to\infty}
\mathbb E\left|
E[\widetilde{\mathcal G}_{N,r}(iR) \mid \mathcal{F}_{r - 1, N}]
-\phi(R)
\right|^2=0.
\end{equation}

Since we have
\begin{equation} \label{eq:mult_GOE_3_2}
\mathbb E\left|
\widetilde{\mathcal G}_{N,r}(iR)-\phi(R)
\right|^2
=
\mathbb E|E[\widetilde{\mathcal G}_{N,r}(iR) \mid \mathcal{F}_{r - 1, N}]-\phi(R)|^2
+
\mathbb E\,\Var\!\left(\widetilde{\mathcal G}_{N,r}(iR)\mid \mathcal F_{r-1,N}\right).
\end{equation}
It remains to estimate the conditional variance $\Var(\widetilde{\mathcal G}_{N,r}(iR)\mid \mathcal F_{r-1,N})$. Set
\[
X_N(R):=N\,\alpha_{r, N}^\top D_N(R) \alpha_{r, N},
\qquad
n_N:=N-r+1.
\]
Since
\[
\widetilde{\mathcal G}_{N,r}(iR)=X_N(R)-N^{1/3},
\]
we have
\[
\Var\!\left(\widetilde{\mathcal G}_{N,r}(iR)\mid \mathcal F_{r-1,N}\right)
=
\Var\!\left(X_N(R)\mid \mathcal F_{r-1,N}\right).
\]

Write
\[
D_N(R)=D^{\Re}_N(R)+iD^{\Im}_N(R),
\]
where \(D^{\Re}_N(R)=\Re D_N(R)\) and \(D^{\Im}_N(R)=\Im D_N(R)\) are real diagonal, hence self-adjoint, matrices. Applying Lemma~\ref{lem:spherical-moments-subspace} separately to \(D^{\Re}_N(R)\) and \(D^{\Im}_N(R)\), we obtain
\[
\Var\!\left(X_N(R)\mid \mathcal F_{r-1,N}\right)
\le
C_r
\left(
\|P_{r-1,N}D^{\Re}_N(R)P_{r-1,N}\|_{HS}^2
+
\|P_{r-1,N}D^{\Im}_N(R)P_{r-1,N}\|_{HS}^2
\right).
\]
Since
\[
\|P_{r-1,N}D^{\Re}_N(R)P_{r-1,N}\|_{HS}^2
+
\|P_{r-1,N}D^{\Im}_N(R)P_{r-1,N}\|_{HS}^2
\le
\|D_N(R)\|_{HS}^2,
\]
we get
\[
\Var\!\left(X_N(R)\mid \mathcal F_{r-1,N}\right)
\le
C_r\sum_{m=1}^N \frac{1}{|iR-a_{m,N}|^2}.
\]
Here \(\|A\|_{HS}:=(\tr(AA^\ast))^{1/2}\). 
Hence
\[
\Var\!\left(\widetilde{\mathcal G}_{N,r}(iR)\mid \mathcal F_{r-1,N}\right)
\le
C_r\sum_{m=1}^N \frac1{|iR-a_{m,N}|^2}.
\]
Using
\[
\sum_{m=1}^N \frac1{|iR-a_{m,N}|^2}
=
\frac{N^{1/3}}{2iR}
\left[
\widehat{m}_N\!\left(2-\frac{iR}{N^{2/3}}\right)
-
\widehat{m}_N\!\left(2+\frac{iR}{N^{2/3}}\right)
\right],
\]
and the same local \(L^2\) estimate again, we conclude that
\[
\lim_{R\to\infty}\limsup_{N\to\infty}
\mathbb E\,\Var\!\left(\widetilde{\mathcal G}_{N,r}(iR)\mid \mathcal F_{r-1,N}\right)=0,
\]
and together with \eqref{eq:mult_GOE_3_1} and \eqref{eq:mult_GOE_3_2}, we verify \eqref{eq:mult_GOE_third}.

Thus all the three conditions required by \cite[Theorem~8.13]{Bykhovskaya-Gorin-Sodin25} hold, and \cite[Theorem~8.13]{Bykhovskaya-Gorin-Sodin25} yields Proposition \ref{prop:general-meromorphic}.
\end{proof}

\section{Proof of two linear algebraic results} \label{sec:linear_algebra}

Here we prove Lemma \ref{lem:eigenvector_eigenvalue} and a technical linear algebraic result used in the proof of Theorems \ref{finite_spikes_GOE} and \ref{one_spike_beta}.

\begin{proof}[Proof of Lemma \ref{lem:eigenvector_eigenvalue}]
  For notational simplicity, we only prove the lemma in the real symmetric setting. The Hermitian setting is parallel.
  
  Recall that \(\x_k\) is the normalized eigenvector of \( Y = X + d \w \w^{\top} \) associated with \(\lambda_k\). Since
\[
  (X + d \w \w^{\top}) \x_k=\lambda_k\x_k,
\]
we have
\[
  (X - \lambda_k I)\x_k=-d (\w^\top \x_k) \w .
\]
Because \(\lambda_k\notin \{ \mu_j \}^N_{j = 1} = \sigma(X)\), the matrix \(X - \lambda_k I\) is invertible, and hence \(\x_k\) is proportional to \((X - \lambda_k I)^{-1} \w \). More precisely, up to a sign, 
\begin{equation*} % \label{eq:x_k_in_v_quantitative}
  \x_k = \frac{(X - \lambda_k I)^{-1} \w}{\sqrt{\w^\top (X - \lambda_k I)^{-2} \w}}.
\end{equation*}

Since \(\lambda_k\) is an eigenvalue of \(Y\), the matrix determinant lemma \cite[Theorem 18.1.1]{Harville97} gives 
\begin{equation*} % \label{eq:det_trick_rank_one}
  \begin{split}
    0 = \det(\lambda_k I - Y) 
      &= \det(\lambda_k I - X)\det\bigl(I-d(\lambda_k I - X)^{-1} \w  \w^\top\bigr) \\
      &= \det(\lambda_k I - X)\Bigl(1-d \w^\top(\lambda_k I - X)^{-1} \w \Bigr).
  \end{split}
\end{equation*}
Since \(\det(\lambda_k I - X)\neq0\), it follows that \eqref{eq:rank_one_root_identity} holds. Hence, \eqref{eq:eigenvector_eigenvalue} is also proved.
\end{proof}

\begin{proof}[Proof of Lemma \ref{lem:spherical-moments-subspace}]
  By orthogonal/unitary invariance, we may assume
  \begin{align*}
    V= {}& \Span_{\mathbb F}\{e_1,\dots,e_m\},
    &
      P= {}& I_m\oplus 0_{N-m}.
  \end{align*}
  Let \(\zeta_1,\dots,\zeta_m\) be i.i.d.\ standard \(\mathbb F\)-Gaussian variables, real standard normal if \(\mathbb F=\mathbb R\), and standard complex normal if \(\mathbb F=\mathbb C\). Then
  \[
    v=\frac{(\zeta_1,\dots,\zeta_m,0,\dots,0)^\top}
    {\left(\sum_{i=1}^m |\zeta_i|^2\right)^{1/2}}
  \]
  is uniformly distributed on the unit sphere of \(V\). This gives that for $1\le a,b\le m$,
  \begin{align*}
    \mathbb E[v_a v_b]= {}& \frac{\delta_{ab}}{m}, \quad (\mathbb{F} = \realR), & \mathbb E[\overline v_a v_b]= {}& \frac{\delta_{ab}}{m}, \quad (\mathbb{F} = \compC),\qquad 
  \end{align*}
  and hence \eqref{eq:vvtop}.
  
  For the fourth moment, the standard spherical moment identities are
  \begin{align*}
    \mathbb E[v_a v_b v_c v_d]
    = {}&
    \frac{
      \delta_{ab}\delta_{cd}
      +\delta_{ac}\delta_{bd}
      +\delta_{ad}\delta_{bc}
    }{m(m+2)}
          &&
          (\mathbb F=\mathbb R), \\
    \mathbb E[\overline v_a v_b \overline v_c v_d]
    = {}&
    \frac{
      \delta_{ab}\delta_{cd}
      +\delta_{ad}\delta_{bc}
    }{m(m+1)}
          &&
             (\mathbb F=\mathbb C).
  \end{align*}
  Contracting these identities with the entries of the compressions \(PAP\) and	\(PBP\), and using that \(A,B\) are self-adjoint, gives
  \[
    \mathbb E\bigl[(v^\ast A v)(v^\ast B v)\bigr]
    =
    \begin{cases}
      \dfrac{\tr(PAP)\tr(PBP)+2\tr(PAPBP)}{m(m+2)},
      & \mathbb F=\mathbb R,\\[1.2em]
      \dfrac{\tr(PAP)\tr(PBP)+\tr(PAPBP)}{m(m+1)},
      & \mathbb F=\mathbb C.
    \end{cases}
  \]
  This is exactly \eqref{eq:trace_identity}.
\end{proof}

\end{document}